\def\makeheadbox{{%
\hbox to0pt{\vbox{\baselineskip=10dd\hrule\hbox
to\hsize{\vrule\kern3pt\vbox{\kern3pt
\hbox{\bfseries Draft for discussion }
\hbox{Date of this version: 13.04.21}
\kern3pt}\hfil\kern3pt\vrule}\hrule}%
\hss}}}
\def\cequiv{\raisebox{-1.5mm}{$\;\stackrel{\raisebox{-3.9mm}{=}}{{\sim}}\;$}}
\newtheorem{theorem}{Theorem}[section]
\newtheorem{remark}[theorem]{Remark}\newtheorem{proposition}[theorem]{Proposition}
\newtheorem{lemma}[theorem]{Lemma}
\newtheorem{definition}[theorem]{Definition}
\newtheorem{ex}{Example}
\newcounter{mnote}
\let\oldmarginpar\marginpar
\renewcommand\marginpar[1]{\-\oldmarginpar[\raggedleft\footnotesize #1]
  {\raggedright\footnotesize #1}}
\newcommand{\ud}{\,d}
\numberwithin{equation}{section}
\setlist[enumerate]{nosep}
\begin{document}
\title{Lowest-degree robust finite element schemes for inhomogeneous bi-Laplace problems}

\author{Bin Dai}

\author{Huilan Zeng}

\author{Chensong Zhang}

\author{Shuo Zhang}

\subjclass[2000]{Primary 65N12, 65N15, 65N22, 65N30}

\keywords{robust optimal quadratic element, rectangular grids,singular perturbation problem , Helmholtz transmission eigenvalue problems}

\maketitle

\begin{abstract}
In this paper, we study the numerical method for the bi-Laplace problems with inhomogeneous coefficients; particularly, we propose finite element schemes on rectangular grids respectively for an inhomogeneous fourth-order elliptic singular perturbation problem and for the Helmholtz transmission eigenvalue problem. The new methods use the reduced rectangle Morley (RRM for short) element space with piecewise quadratic polynomials, which are of the lowest degree possible. For the finite element space, a discrete analogue of an equality by Grisvard is proved for the stability issue and a locally-averaged interpolation operator is constructed for the approximation issue. Optimal convergence rates of the schemes are proved, and numerical experiments are given to verify the theoretical analysis. 
\end{abstract}

\tableofcontents

\section{Introduction} 

In this paper, we present a lowest-degree robust finite element schemes for inhomogeneous bi-Laplace problems. Here, by inhomogeneous bi-Laplace problems, we mean problems of inhomogeneous bi-Laplace operator which reads $\Delta\beta(x)\Delta$, where $\beta(x)>0$ is a varying coefficient. The operator can be used for, e.g., inhomogeneous thin plate. Meanwhile, it can be found in other applications such as the Helmholtz transmission eigenvalue problem. 
~\\

The main influence of the inhomogeneous coefficient lies on the establishment of the variational formulation. We take the homogeneous and inhomogeneous bi-Laplace equation for illustration. For the homogeneous bi-Laplace equation on a polygon $\Omega$,
\begin{equation}
\left\{
\begin{array}{rl}
\Delta^2u=f&\mbox{in}\ \Omega,
\\
u=\frac{\partial u}{\partial n}=0&\mbox{on}\ \partial\Omega,
\end{array}
\right.
\end{equation}
the generally used variational formulation is: find $u\in H^2_0(\Omega)$, such that 
\begin{equation}
(\nabla^2u,\nabla^2v)=(f,v)\ \ \forall\,v\in H^2_0(\Omega).
\end{equation}
This formulation is based on the  fundamental equality  by Grisvard \cite{Grisvard1992Singularities}
\begin{equation}\label{eq:Grisvard}
(\nabla^2w,\nabla^2v)=(\Delta w,\Delta v),\ \ \forall\,w,v\in H^2(\Omega)\cap H^1_0(\Omega),
\end{equation}
and the formulation is straightforward to be used for finite element discretization. We remark that the equality \eqref{eq:Grisvard} is a two-dimensional strengthened analogue of the Miranda--Talenti estimate which reads (\cite{maugeri2000elliptic})
\begin{equation}\label{eq:MTe}
\|\nabla^2 v\|_{0,\Omega}\leqslant \|\Delta v\|_{0,\Omega}, \quad v\in H^2(\Omega)\cap H^1_0(\Omega),\ \ \ \Omega\ \mbox{being\ convex}.
\end{equation}
This estimate \eqref{eq:MTe} can play important roles in applications(c.f., e.g., \cite{Smears.I;Suli.E,Neilan.M;Wu.M2019}). Note that this strengthened one \eqref{eq:Grisvard} holds on both convex and nonconvex domains.

On the other side, for the inhomogeneous bi-Laplace equation
\begin{equation}\label{eq:inbl}
\left\{
\begin{array}{rl}
\Delta\beta(x)\Delta u=f&\mbox{in}\ \Omega,
\\
u=\frac{\partial u}{\partial n}=0&\mbox{on}\ \partial\Omega,
\end{array}
\right.
\end{equation}
the variational formulation is: find $u\in H^2_0(\Omega)$, such that 
\begin{equation}\label{eq:vpih}
(\beta(x)\Delta u,\Delta v)=(f,v)\ \ \forall\,v\in H^2_0(\Omega).
\end{equation}
Based on \eqref{eq:Grisvard}, the well-posedness of \eqref{eq:vpih} follows directly. However, as \eqref{eq:Grisvard} does not generally hold for nonconforming finite element spaces and thus the inner product $\displaystyle\sum_T(\Delta v_h,\Delta w_h)$ is not coercive thereon, the low-degree discretization of \eqref{eq:inbl} may be difficult to establish.  
~\\

It is then an issue to construct low-degree finite element spaces whereon the discrete analogue of \eqref{eq:Grisvard} holds.  Recently, a finite element space $B^3_h$ for $H^2$ problems by piecewise cubic polynomials on general triangulations is presented by Zhang\cite{Zhang.S2021SCM}, namely,
\begin{multline*}
B_h^{3}:=\{v\in  \mathbb{P}^3_h:\ v\ \mbox{is\ continuous\ at}\ a\in\mathcal{N}_h;
\int_e\llbracket v\rrbracket=0,\ \mbox{and}\  \int_ep_e\llbracket\partial_{\bf n}v\rrbracket=0, \forall\, p_e\in P_1(e),\ \forall\, e\in\mathcal{E}_h^i\}
\end{multline*}
where we use $\llbracket \cdot\rrbracket$ for the jump across the edge $e$, and further,
$$
B_{ht}^{3}:=\{v\in B_h^{3}: v(a)=0,\ a\in\mathcal{N}_h^b;\ \int_ev=0,\   \forall\,e\in\mathcal{E}_h^b\},
$$
and a discrete analogue of \eqref{eq:Grisvard} is proved thereon. Here and in the sequel, for a subdivision of $\Omega$ by triangles or quadrilaterals, we use $\mathcal{N}_h$ for the set of all vertices, $\mathcal{N}_h=\mathcal{N}_h^i\cup\mathcal{N}_h^b$, with $\mathcal{N}_h^i$ and $\mathcal{N}_h^b$ comprising the interior vertices and the boundary vertices, respectively. Similarly, we use $\mathcal{E}_h=\mathcal{E}_h^i\bigcup\mathcal{E}_h^b$ for the set of all the edges, with $\mathcal{E}_h^i$ and $\mathcal{E}_h^b$ comprising the interior edges and the boundary edges, respectively.  Specifically, for $w_h,v_h\in B^3_{ht}$,
\begin{equation}\label{eq:b3grisvard}
\sum_T\int_T\nabla^2w_h\nabla^2v_h=\sum_T\int_T\Delta w_h\Delta v_h.
\end{equation} 
Used for homogeneous, inhomogeneous and Helmholtz transmission eigenvalue problems, the space can provide optimal discretization schemes\cite{Zhang.S2021SCM,xi2020high}. So far as we know, they are the up-to-date lowest-degree finite element scheme for inhomogeneous bi-Laplace problems. We remark that, besides the conforming and nonconforming finite element schemes, there may still be other existing kinds of discretizations devoted to the model problems, and we will not mention them too much in the present paper. 
~\\

In this paper, we present a lowest-degree robust finite element schemes for inhomogeneous bi-Laplace problems. Particularly, let $\Omega\subset\mathbb{R}^2$ be a simply-connected (convex or non-convex) polygon that can be subdivided to rectangular cells and $V_h^{\rm R}$ be the reduced rectangular Morley (RRM for short in the sequel) element space\cite{Shuo.Zhang2020} defined on $\Omega$ which is a space of certain piecewise quadratic polynomials ($P_2$), and we show that \eqref{eq:b3grisvard} holds on $V_{h0}^{\rm R}$, a subspace of $V_h^{\rm R}$ equipped with proper boundary condition. Then, for two basic model problems, more details of which will be given later, namely
\begin{itemize}
\item the fourth-order elliptic singular perturbation problem with varying coefficient:
\begin{equation}\label{eq:varyspp}
\varepsilon^{2} \Delta(\beta(\boldsymbol{x})\Delta u) -\Delta u = f;
\end{equation}
this equation models thin buckling plates with $u$ representing the displacement \cite{Frank1997};
\item the Helmholtz transmission eigenvalue problem
\begin{equation}\label{eq:eqhte}
\left(\triangle+k^2 \beta\right) \frac{1}{\beta-1}\left(\triangle+k^2\right) z=0;
\end{equation}
\end{itemize}
we show that the space $V^{\rm R}_{h0}$ provides robust optimal schemes on rectangular subdivisions. 
~\\

Evidently, when $\beta$ is a constant coefficient, \eqref{eq:varyspp} is just the standard fourth order elliptic perturbation problem which has been well studied. We refer to, e.g., \cite{Chen;Chen2014,Chen;Chen;Qiao2013,Chen;Chen;Xiao2014,Guzman;Leykekhman;Neilan2012,Nilssen;Tai;Winther2001,Tai;Winther2006,Wang;Wu;Xie2013,Wang;Shi;Xu2007,2ndWang;Shi;Xu2007,Xie;Shi;Li2010,Zhang;Wang2008, Chen;Liu;Qiao2010,Chen;Zhao;Shi2005,Wang2001,franz2014c0,semper1992conforming,vigo2006efficient,guzman2012family} for many different kinds of robust finite element schemes. All these schemes pretend to apply finite element methods which work for both fourth and second order problems. Those of them where \eqref{eq:b3grisvard} holds can be used directly for \eqref{eq:varyspp} with varying coefficients. Though, the space $B^3_{ht}$ of \cite{Zhang.S2021SCM} is so far the only nonconforming finite element space that admits \eqref{eq:b3grisvard} to be true. Meanwhile, there have been wide discussions on \eqref{eq:eqhte}, and we refer to, e.g., \cite{colton2010analytical,ji2014multigrid,ji2017nonconforming,xi2018c0ip,xi2020multi,yang2016mixed,yang2016non,camano2018convergence,geng2016c,ji2012algorithm,cakoni2013transmission,sun2011iterative} for part of them. Any finite element space that admits \eqref{eq:b3grisvard} to be true can be used for a finite element discretization of \eqref{eq:eqhte}, for which we refer to \cite{xi2020high} for an example. In this paper, we establish \eqref{eq:b3grisvard} on a space with theoretically lowest-degree polynomials, and construct schemes for the two model problems. Though not mentioned in the present paper, the more complicated structure of the varying coefficient, such as the multi-scale essence, may be studied more meticulously in future. New methods and analysis can be stimulated. As \eqref{eq:Grisvard} can be viewed as a stronger assertion than the Miranda-Talenti estimate \cite{maugeri2000elliptic}, the RRM element space can be used where a discrete Miranda-Talenti is needed. Further, the RRM element space is potentially able to be generalized to three and higher dimensions; this will be discussed more in future. 
~\\

It is notable that the RRM element space $V^{\rm R}_h$, originally given in \cite{Shuo.Zhang2020} and then studied in \cite{ZZZ2021} does not coincide with a ``finite element" defined by Ciarlet's triple\cite{Ciarlet1978,Brenner;Scott2007,Wang.M;Shi.Z2013mono}. Though, the schemes based on $V^{\rm R}_h$ are still practical computational ones by figuring out the basis functions whose supports are quite local. Actually, it can be proved that the minimal support of a basis function of $V^{\rm R}_{h0}$ is as Figure \ref{fig:3x3basis}, and a same cell can be covered by the supports of 25 basis functions of such type. Therefore, the restrictions of these 25 minimally-supported functions on this cell can not be linearly independent. This confirms that, the space $V^{\rm R}_{h0}$, as well as $V^{\rm R}_h$, can not be correspondent to a ``finite element" in Ciarlet's sense. On the other hand, the standard method to construct the approximation error which works for Ciarlet's finite elements can not be straightforward used for $V^{\rm R}_h$, and in \cite{Shuo.Zhang2020} and \cite{ZZZ2021}, the approximation error estimations are established by indirect ways other than constructing an interpolator directly. The method in \cite{Shuo.Zhang2020} works for estimation of broken $H^2$ norm and the method in \cite{ZZZ2021} works for convex domains. In the present paper, we reconstruct the estimation for both the broken $H^{2}$ and $H^{1}$ norms on both convex and non-convex domains. Inspired by the construction of quasi-spline interpolation operators in the spline function theory (see, for example, \cite{Wang;Lu1998,Sablonniere.P2003,2Sablonniere.P2003}), we propose a locally-averaging operator which preserves $P_{2}$ polynomials locally and is stable in terms of relevant Sobolev norms. Consequently, optimal error estimate of the interpolation operator is established. This interpolation operator is suitable for any regions that can be subdivided into rectangles, and particularly an optimal estimation can be given for the RRM element space in the broken $H^{1}$ norm on non-convex domains.  Therefore, the convergence analysis of the RRM element for the model problem~\eqref{eq:model problem} robust to $\varepsilon$ is derived. We remark that the newly-designed interpolation operator is not a projection onto the RRM element space, namely, it can not preserve every function in this space. A proof that the RRM space does not allow a locally-defined projective interpolation can be found in \cite{ZZZ-AML}, where a long-standing open problem is addressed. 
~\\

The rest of the paper is organized as follows. In Section \ref{sec:pre}, we present some necessary preliminaries. In Section~\ref{sec:rrmscheme}, the reduced rectangular Morley element space is revisited, the equality of \eqref{eq:b3grisvard} type is constructed in Lemma \ref{lem: discrete_property_1}, and the approximation error estimation is conducted based on a locally-averaging interpolation operator in Theorem \ref{thm:approxH02}. In Section~\ref{sec:convergence}, the convergence analysis of the RRM element scheme for the model problems are provided. Finally, in Section~\ref{sec:experiments}, some numerical experiments are given to verify the theoretical analysis. Through this paper, for the ease of the readers, we would postpone some long technical proofs to the appendix section.

\section{Preliminaries}
\label{sec:pre}

%

Throughout this paper, $\Omega \subset \mathbb{R}^{2}$ is a simply-connected (not necessarily convex) polygon, which can be subdivided into rectangles. We use $\nabla$ and $\nabla^{2}$ to denote the gradient operator and Hessian, respectively. We use standard notation on Lebesgue and Sobolev spaces, such as $L^{p}(\Omega)$, $H^{s}(\Omega)$, and $H^{s}_{0}(\Omega)$.  Denote by $H^{-s}(\Omega)$ the dual spaces of $H^{s}_0(\Omega)$. We use $\|\cdot\|_{s,\Omega}$ and $|\cdot|_{s,\Omega}$ for the standard Sobolev norm and semi-norm, respectively~\cite{Hughes1987}. We utilize the subscript $``\cdot_h"$ to indicate the dependence on grids. Particularly, an operator with the subscript $``\cdot_h"$ implies the operation is done cell by cell. Finally, $\lesssim$, $\gtrsim$, and $\cequiv$ respectively denote $\leqslant$, $\geqslant$, and $=$ up to a generic positive constant \cite{J.Xu1992}, which  might depend on the shape-regularity of subdivisions, but not on the mesh-size~$h$  and the perturbation parameter $\varepsilon$ in \eqref{eq:model problem}.

\subsection{Inhomogeneous fourth order elliptic perturbation problem}
The fourth-order elliptic singular perturbation problem is to find $u$ such that

\begin{equation}\label{eq:model problem}
\left\{
\begin{array}{rl}
 \varepsilon^{2} \Delta(\beta(\boldsymbol{x})\Delta u) -\Delta u = f, & \mbox{ in } \ \Omega , \\
u = \frac{\partial u}{\partial \mathbf{n}} = 0, & \mbox{ on } \ \partial\Omega ,
\end{array}
\right.
\end{equation}
where $\frac{\partial u}{\partial \mathbf{n}}$ denotes the normal derivative along the boundary $\partial \Omega$, $0< \varepsilon \ll 1$ is a real parameter, $\beta(\boldsymbol{x})$ is a bounded smooth non-constant function and $\beta(\boldsymbol{x}) \geqslant \beta_{\min }>0$. This equation models thin buckling plates with $u$ representing the displacement of the plate\cite{Frank1997}.

The corresponding variational formulation is given by : Find $u \in V:=H_0^2(\Omega)$ satisfying
\begin{equation} \label{eq: pertubation_variational_form}
\varepsilon^2 a(u, v)+b(u, v)=(f, v), \quad \forall v \in H_0^2(\Omega),
\end{equation}
where
$$
a(u, v)=\int_{\Omega} \beta(\boldsymbol{x})\Delta u \Delta v d x d y \text { and } b(u, v)=\int_{\Omega} \nabla u \cdot \nabla v d x d y .
$$
We define an energy norm on $V$ relative to $\varepsilon$ as
$$
\|w\|_{\varepsilon, \Omega}:=\sqrt{\varepsilon^2 |w|^2_{2,\Omega}+|w|^2_{1,\Omega}}.
$$
The well-posedness of \eqref{eq: pertubation_variational_form} is the classical result. 

\subsection{Helmholtz transmission eigenvalue problem}
The transmission eigenvalue problem is to find $k\in \mathbb{C}, \phi,\varphi\in H^2(\Omega)$ such that
\begin{equation} \label{eq: Helmholtz_transmission_model}
\left\{\begin{aligned}
&\Delta\phi + k^2\beta(\boldsymbol{x})\phi = 0, & \text { in } \Omega, \\
&\Delta\varphi  + k^2\varphi = 0, & \text { in } \Omega, \\
&\varphi - \phi  = 0, & \text { on } \partial \Omega, \\
&\frac{\partial \phi}{\partial \boldsymbol{n}} - \frac{\partial \varphi}{\partial \boldsymbol{n}} = 0, & \text { on } \partial \Omega,
\end{aligned}\right.
\end{equation}
where $\beta(\boldsymbol{x})$ is the index of refraction and $\boldsymbol{n}$ is the unit outward normal to the boundary $\partial \Omega$. Typically, it's assumed that $\beta(\boldsymbol{x})>1$ or $0<\beta(\boldsymbol{x})<1$.

Following the same procedure in \cite{ji2012algorithm}, let $u=\phi - \varphi  \in H_0^2(\Omega)$, then we obtain
$$
\left(\triangle+k^2\right) u=-k^2(\beta - 1) \phi .
$$
Dividing $\beta-1$ and applying $\left(\triangle+k^2 \beta\right)$ to both sides of the above equation, we obtain
$$
\left(\triangle+k^2 \beta\right) \frac{1}{\beta-1}\left(\triangle+k^2\right) u=0 .
$$
The transmission eigenvalue problem can be stated as: find $\left(k^2 \neq 0, u\right) \in \mathbb{C} \times H_0^2(\Omega)$ such that
$$
\int_{\Omega} \frac{1}{\beta-1}\left(\triangle u+k^2 u\right)\left(\triangle v+k^2 \beta v\right) d x=0,
$$
for all $v \in H_0^2(\Omega).$
Define
$$
\mathcal{A}_\tau(u, v) =\left(\frac{1}{\beta-1}(\triangle u+\tau u),(\triangle v+\tau v)\right)+\tau^2(u, v),
$$
$$
\mathcal{B}(u, v) =(\nabla u, \nabla v),
$$
where $\tau=k^2$. Using the Green's formula, the variational problem can be written as the following generalized eigenvalue problem: Find $(\tau, u)\in \mathbb{R}\times H_0^2(\Omega)$ such that
\begin{equation} \label{eq: variational_form_Helm}
	\mathcal{A}_\tau(u, v)  = \tau\mathcal{B}(u, v),
\end{equation}
where the bilinear form $\mathcal{A}_\tau(\cdot,\cdot)$ is coercivity on $H_0^2(\Omega)\times H_0^2(\Omega)$ and the binear form $\mathcal{B}(\cdot,\cdot)$ is symmetric and nonnegative on $H_0^2(\Omega)\times H_0^2(\Omega)$ (c.f. \cite{sun2011iterative}).

\subsection{Subdivisions and finite elements}
Let $\big\{\mathcal{G}_h\big\}$ be a family of rectangular grids of domain $\Omega$. If none of the vertices of a cell is on $\partial\Omega$, it is called an interior cell, otherwise it is called a boundary cell. We use $\mathcal{K}_{h}^{i}$ and $\mathcal{K}_{h}^{b}$ for the set of interior cells and boundary cells, respectively.  Let $\overline{\omega}$ and $\mathring{\omega}$  denote the closure and the interior of a region $\omega$. We use symbol $\#$ for the cardinal number of a set. For an edge $e$, $\mathbf{n}_e$ is a unit vector normal to $e$ and $\mathbf{t}_e$ is a unit tangential vector of $e$ such that $\mathbf{n}_e\times \mathbf{t}_e>0$. We use $\llbracket\cdot\rrbracket_e$ for the jump across $e$. We stipulate that, if $e = T_{1}\cap T_{2}$, then $\llbracket v \rrbracket_e = \big(v|_{T_{1}} - v|_{T_{2}}\big)|_{e}$ if the direction of $\mathbf{n}_{e}$ goes from $T_{1}$ to $T_{2}$, and if $e\subset\partial\Omega$, then $\llbracket\cdot\rrbracket_e$ is the evaluation on $e$. 

Suppose that $K$ represents a rectangle with sides parallel to the two axis respectively. Let $a_{i}$ and $e_{i}$ denote a vertex and an edge of $K$ with $i = 1:4$. Let $c_{K}:= (x_{K},y_{K})$ be the barycenter of $K$. Let $h_{x,K}$, $h_{y,K}$ be the length of $K$ in the $x$ and $y$ directions, respectively.  Let $h_{K}: = \max\{h_{x,K},h_{y,K}\}$ be the size of $K$, and $\rho_{K}$ be the inscribed circle radius.
Let $h := \max\limits_{K \in \mathcal{G}_{h}}h_{K}$ be the mesh size of $\mathcal{G}_{h}$. Let $P_l(K)$ denote the space of all polynomials on $K$ with the total degree no more than~$l$. Let $Q_l(K)$ denote  the space of all polynomials on $K$ of degree no more than~$l$ in each variable. Similarly, we define spaces $P_l(e)$ and $Q_l(e)$ on an edge~$e$.

In this paper, we assume that $\big\{\mathcal{G}_{h}\big\}$ is in a regular family of rectangular subdivisions, i.e.,
\begin{equation}\label{eq:regularity}
\max_{K\in \mathcal{G}_{h}}\frac{h_{K}}{\rho_{K}} \leq \gamma_{0},
\end{equation}
where $\gamma_{0}$ is a generic constant independent of $h$.  Such a mesh is actually locally quasi-uniform, and this helps for the stability analysis of the interpolation operator constructed in Section~\ref{sec:rrmscheme}.

The rectangular Morley (RM) element is defined by 
 $(K,P_{K}^{\rm{M}},D_{K}^{\rm{M}})$ with the following properties:
\begin{itemize}
\item[(1)] $K$ is a rectangle;
\item[(2)] $P_{K}^{\rm{M}} = P_{2}(K) + \text{span}\{x^{3},y^{3}\}$;
\item[(3)] for any $v\in H^{2}(K)$, $D_{K}^{\rm{M}} =\big\{ v(a_{i}), \ \fint_{e_{i}}\partial_{\mathbf{n}_{e_{i}}}v \, d s \big\}_{i=1:4}$.
\end{itemize}
Given a grid $\mathcal{G}_h$, define the RM element space on $\mathcal{G}_h$ as
\begin{equation*}
\begin{split}
V_{h}^{\rm{M}}(\mathcal{G}_h) := \Big\{w_{h}\in L^{2}(\Omega) : w_{h}|_{K} \in P_{K}^{\rm{M}}, \ & w_{h}(a)\mbox{ is continuous at any } a \in \mathcal{N}_{h}^{i}, \\
& \mbox{and} \fint_{e}\partial_{\mathbf{n}_{e}} w_{h} \, d s \mbox{ is continuous across any } e \in \mathcal{E}_{h}^{i} \Big\}.
\end{split}
\end{equation*}
Associated with the boundary condition of $H^{1}_{0}$ type, define $V_{hs}^{\rm{M}}(\mathcal{G}_h) :=\Big\{w_{h}\in V_{h}^{\rm{M}} :  w_{h}(a)=0, \ \forall a\in \mathcal{N}_{h}^{b}\Big\}$, and associated with the boundary condition of $H^{2}_{0}$ type, define $V_{h0}^{\rm{M}}(\mathcal{G}_h) :=\Big\{w_{h}\in V_{hs}^{\rm{M}} :\   \fint_{e}\partial_{\mathbf{n}_{e}} w_{h} \,d s =0,  \ \forall e\in \mathcal{E}_{h}^{b} \Big\}.$ In the sequel, we can drop the dependence on $\mathcal{G}_h$ when no ambiguity is brought in. 

For $v, w \in L^2(\Omega)$ that $\left.v\right|_K,\left.w\right|_K \in H^1(K), \forall K \in \mathcal{G}_h$, we define $ b_h(u, v)=\sum_{K \in \mathcal{G}_h} \int_K  \nabla u \cdot \nabla v  d x .$ 

\begin{lemma}{\rm(\!\cite[Lemmas 3.2 and 3.5]{XY.Meng;XQ.Yang;S.Zhang2016})}\label{lem:consisRM} For any function $v_{h} \in V_{hs}^{\rm{M}}$, we have the following estimates:
\begin{itemize}
\item[(a)] For any shape-regular rectangular grid, it holds that
\begin{align*}
| b_{h}(v,v_{h}) + (\Delta v,v_{h}) | \lesssim \sum_{K\in \mathcal{G}_{h}} h_{K}^{2} |v|_{2,K} |v_{h}|_{2,K}\lesssim h |v|_{2,\Omega} |v_{h}|_{1,h}, \quad \forall v\in H^{2}(\Omega)\cap H^{1}_{0}(\Omega);
\end{align*}

\item[(b)] For any uniform rectangular grid, it holds that
\begin{align*}
| b_{h}(v,v_{h}) + (\Delta v,v_{h}) | \lesssim h^{k-1} |v|_{k,\Omega} |v_{h}|_{1,h}, \quad \forall v\in H^{k}(\Omega)\cap H^{1}_{0}(\Omega), \quad k = 2,3.
\end{align*}
\end{itemize}
\end{lemma}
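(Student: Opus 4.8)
The plan is to reduce the consistency functional, by a cellwise Green's formula, to a small remainder obtained after subtracting a conforming companion of $v_h$, and then to extract the powers of $h_K$ from the weak continuity built into $V_h^{\rm M}$. Write $v_h=\Pi_h^1v_h+\psi$, where $\Pi_h^1v_h$ is the continuous piecewise-$Q_1$ function sharing the vertex values of $v_h$; since $v_h(a)=0$ at boundary vertices, $\Pi_h^1v_h\in H_0^1(\Omega)$, and because $v\in H^2(\Omega)$---so that $\partial_{\mathbf{n}}v$ is a single-valued trace and $\Delta v\in L^2(\Omega)$---Green's formula gives $b_h(v,\Pi_h^1v_h)+(\Delta v,\Pi_h^1v_h)=0$. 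Hence $b_h(v,v_h)+(\Delta v,v_h)=b_h(v,\psi)+(\Delta v,\psi)$. Using $\|\psi\|_{0,K}\lesssim h_K^2|v_h|_{2,K}$ and $|\psi|_{1,K}\lesssim h_K|v_h|_{2,K}$ (the nodal $Q_1$-interpolant preserves $P_1$ on each cell), one gets $|(\Delta v,\psi)|\le\sum_K\|\Delta v\|_{0,K}\|\psi\|_{0,K}\lesssim\sum_Kh_K^2|v|_{2,K}|v_h|_{2,K}$, which is already of the asserted form.

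For $b_h(v,\psi)=\sum_K\int_K\nabla v\cdot\nabla\psi$ I would replace $\nabla v$ by $\nabla v-\overline{\nabla v}_K$ on each cell; by Poincar\'e's inequality the fluctuation part is bounded by $\sum_Kh_K|v|_{2,K}|\psi|_{1,K}\lesssim\sum_Kh_K^2|v|_{2,K}|v_h|_{2,K}$. The remainder is $\sum_K\overline{\nabla v}_K\cdot\int_{\partial K}\psi\,\mathbf{n}_K$, and here the rectangular Morley structure enters: on each edge $e$, $\psi|_K$ is the cubic trace of $v_h|_K$ minus its linear interpolant at the endpoints of $e$, hence a cubic vanishing at both vertices of $e$, so $\|\psi|_K\|_{0,e}\lesssim h_e^{3/2}|v_h|_{2,K}$ and its edge mean is $h_e^3$ times a second-order quantity of $v_h$ (with $\int_e\psi|_{T_1}-\int_e\psi|_{T_2}=\int_e\jump{v_h}_e$ on interior edges, since $\Pi_h^1v_h$ is continuous). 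Reorganizing edge by edge, splitting $\overline{\nabla v}_K$ into the patch average $\overline{\nabla v}_{\omega_e}$ plus a fluctuation $\lesssim|v|_{2,\omega_e}$, and applying a discrete summation by parts along each row and column of the rectangular mesh turns the remaining mean-value contributions into differences of neighbouring cell averages of $\nabla v$ (again $\lesssim|v|_{2,\omega_e}$); the boundary contributions so produced involve $\partial_{\mathbf{n}}v$ on $\partial\Omega$ and are absorbed by the trace inequality together with the bound $\|\nabla v\|_{0,\Omega}\lesssim|v|_{2,\Omega}$, which holds because $v\in H^2(\Omega)\cap H_0^1(\Omega)$. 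Collecting the pieces gives the first estimate of (a), and the second follows from it by the inverse inequality $|v_h|_{2,K}\lesssim h_K^{-1}|v_h|_{1,K}$. Part (b) is obtained by the same scheme with $v\in H^k(\Omega)$, $k=2,3$: on a uniform grid the translation invariance supplies one extra order of cancellation in the edge and patch remainders (a superconvergence effect), which upgrades the gain from $h$ to $h^{k-1}$.

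The step I expect to be the main obstacle is the treatment of $\sum_K\overline{\nabla v}_K\cdot\int_{\partial K}\psi\,\mathbf{n}_K$: one must keep the product of the cell average $\overline{\nabla v}_K$, which is only $O(h_K^{-1}\|\nabla v\|_{0,K})$, against the edge moments $\int_e\jump{v_h}_e=O(h_e^2|v_h|_{2,\omega_e})$ from degenerating into a spurious $h\|\nabla v\|_{0,\Omega}|v_h|_{2,h}$, and instead produce the sharp $\sum_Kh_K^2|v|_{2,K}|v_h|_{2,K}$. This needs both the precise form of the rectangular Morley traces (so that $\int_e\jump{v_h}_e$ is genuinely second- rather than first-order in $v_h$) and a careful discrete summation by parts, with the boundary layer controlled through the Poincar\'e-type estimate for $\|\nabla v\|_{0,\Omega}$; identifying the additional cancellations needed on uniform grids for part (b) is a further technical point.
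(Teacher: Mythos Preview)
The paper does not give a proof of this lemma at all; it is quoted verbatim from Lemmas~3.2 and~3.5 of the cited reference of Meng--Yang--Zhang, so there is no in-paper argument to compare your proposal against.

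On the merits of your sketch: the decomposition $v_h=\Pi_h^1v_h+\psi$ with the conforming $Q_1$ companion is a standard and sound opening, and the bounds $\|\psi\|_{0,K}\lesssim h_K^2|v_h|_{2,K}$, $|\psi|_{1,K}\lesssim h_K|v_h|_{2,K}$ dispose of $(\Delta v,\psi)$ and of the fluctuation part of $b_h(v,\psi)$ exactly as you say. The difficulty you flag is real, and your proposed cure is not yet convincing. After reorganising $\sum_K\overline{\nabla v}_K\!\cdot\!\int_{\partial K}\psi\,\mathbf n_K$ edge by edge and peeling off the neighbour-difference piece $(\overline{\nabla v}_{T_1}-\overline{\nabla v}_{T_2})\cdot\mathbf n_e\int_e\psi|_{T_2}$ (which is indeed $\lesssim h_K^2|v|_{2,\omega_e}|v_h|_{2,\omega_e}$), you are left with $(\overline{\nabla v}_{T_1}\!\cdot\!\mathbf n_e)\int_e\llbracket v_h\rrbracket_e$. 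For the rectangular Morley space the edge jump is a cubic vanishing only at the endpoints, so $\int_e\llbracket v_h\rrbracket_e$ is in general nonzero and only of size $h_K^2|v_h|_{2,\omega_e}$; pairing it with $|\overline{\nabla v}_{T_1}|\lesssim h_K^{-1}\|\nabla v\|_{0,T_1}$ yields $h_K\|\nabla v\|_{0,T_1}|v_h|_{2,\omega_e}$, and after summation, Poincar\'e for $v\in H^2\cap H_0^1$, and the inverse inequality you end with $|v|_{2,\Omega}|v_h|_{1,h}$---one power of $h$ short of the claim. Your ``discrete summation by parts along rows and columns'' is asserted rather than carried out, and it is not clear what telescoping identity for $\int_e\llbracket v_h\rrbracket_e$ you have in mind on a merely shape-regular grid; the rectangular Morley degrees of freedom do not force any obvious row/column cancellation of these moments. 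The same remark applies, a fortiori, to the extra order you invoke for part~(b) on uniform meshes. So the outline is reasonable, but the step you yourself single out remains the genuine gap; to close it you would need an explicit algebraic identity for $\int_e\llbracket v_h\rrbracket_e$ in terms of cell quantities that actually telescopes, or a different splitting (for instance subtracting a higher-order conforming companion, or inserting a projection onto $P_1(e)$ of $\partial_n v$ with a sharper edge estimate) that avoids the bare $\overline{\nabla v}_K$ altogether.
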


\section{An optimal interpolator to the reduced rectangular Morley element space}
\label{sec:rrmscheme}

\subsection{Reduced rectangular Morley element space revisited}
\label{sec:rrmrv}
Given a subdivision $\mathcal{G}_h$, the reduced rectangular Morley (RRM) element space \cite{Shuo.Zhang2020, ZZZ2021} thereon is defined as  
\begin{equation}
\begin{split}
V_{h}^{\rm{R}}(\mathcal{G}_h) := \Big\{w_{h}\in V^{\rm M}_h : w_{h}|_{K} \in P_{2}(K)\Big\}.
\end{split}
\end{equation}
The grid $\mathcal{G}_h$ may be omitted when there is no ambiguity induced. Associated with $H^{1}_{0}(\Omega)$, define $V_{hs}^{\rm{R}} :=V^{\rm R}_{h}\cap V^{\rm M}_{hs}$, and  associated with $H^{2}_{0}(\Omega)$, define $V_{h0}^{\rm{R}} :=V^{\rm R}_h \cap V^{\rm M}_{h0}$.

Denote, by $\mathcal{M}_{K}$,  a $3\times 3$ patch centered at $K$, with lengths and heights denoted by $\big\{L_{K,-1},\ L_{K},\ L_{K,1}\big\}$ and $\big\{H_{K,-1},\ H_{K},\ H_{K,1}\big\}$, respectively (see Figure~\ref{fig:3x3basis}). 
 
Let $\big\{X_{m,n}^{K}\big\}$, $\big\{Y_{m,n}^{K}\big\}$, and $\big\{Z_{m,n}^{K}\big\}$ denote the  interior vertices,  interior edge midpoints in the $x$ direction, and interior edge midpoints in the $y$ direction inside $\mathcal{M}_{K}$, respectively (see Figure~\ref{fig:3x3basis}). 

\begin{figure}[!htbp]
\centering
\includegraphics[height=0.34\hsize]{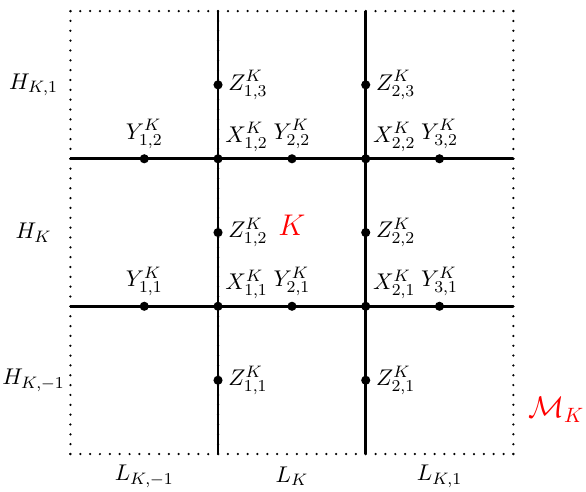}
\caption{Illustration of a $3 \times 3$ patch $\mathcal{M}_{K}$.}\label{fig:3x3basis}
\end{figure}
%

\begin{lemma}\rm(\!\cite[Lemma 15]{Shuo.Zhang2020})
Let $\mathcal{M}_{K}$ be a $3\times 3$ patch centered at $K$; see Figure~\ref{fig:3x3basis}.  Denote $V_{K}^{\rm{R}}:=V^{\rm R}_{h0}(\mathcal{M}_{K})$. Then ${\rm dim}(V_{K}^{\rm{R}} ) = 1$.
\end{lemma}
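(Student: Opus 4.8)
The plan is to turn the statement into a small explicit linear-algebra computation organised by the Morley degrees of freedom and carried out by propagating data from the boundary of the patch inward. The structural fact I would record first is this: on a rectangle $C$ with horizontal side $h_x$ and vertical side $h_y$, a polynomial in $P_2(C)$ carries eight Morley data (four vertex values and four averaged normal derivatives), but since $\dim P_2(C)=6$ these eight numbers satisfy exactly two linear relations, which express the jump of the averaged normal derivative between the two vertical (resp.\ horizontal) edges in terms of $h_x^{-1}$ (resp.\ $h_y^{-1}$) times an alternating sum of the four vertex values; equivalently, for $p\in P_K^{\rm M}$ these two identities hold if and only if the $x^3$- and $y^3$-coefficients of $p$ vanish. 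Viewing $\mathcal M_K$ as a $3\times3$ array of four corner cells, four side cells and the central cell $K$, on each corner cell the $H^2_0$-type boundary conditions prescribe three vertex values and two edge-averages to be zero; one checks (a $5\times 6$ matrix) that these five conditions are independent, so the restriction of $v\in V_K^{\rm R}$ to a corner cell runs over a one-dimensional family with a single parameter $t_i$ ($i=1,\dots,4$), and the value it induces at the one interior vertex of that cell and the averaged normal derivatives it induces on the two interior edges of that cell are explicit multiples of $t_i$.

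Next I would feed this into the four side cells. After imposing its own homogeneous boundary data, such a cell has seven of its eight Morley data already determined — by its boundary conditions and by the two adjacent corner cells — leaving only the averaged normal derivative on the single edge it shares with $K$. Of the cell's two $P_2$-reduction identities, one involves only the determined data and so becomes a linear relation among the $t_i$, while the other fixes the remaining datum. The four relations obtained this way form a $4\times4$ homogeneous system in $(t_1,\dots,t_4)$ with a nonzero $3\times3$ minor, hence of rank at least three; and the map $v\mapsto(t_1,\dots,t_4)$ is injective, since if all $t_i=0$ then all corner-cell pieces vanish, hence all data prescribed on the side cells and on $K$ vanish, and $v\equiv 0$ by Morley unisolvence. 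This already gives $\dim V_K^{\rm R}\le 4-3=1$.

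It remains to show $\dim V_K^{\rm R}\ge 1$. The four side-cell relations have a nontrivial kernel, and the only point left to verify is that a parameter vector in this kernel genuinely extends to an element of $V_K^{\rm R}$, i.e.\ that the two $P_2$-reduction identities on the central cell $K$ — all eight of whose Morley data are by now expressed as explicit linear combinations of the $t_i$ — are automatically satisfied. Substituting the closed-form expressions for the four interior vertex values and the four surrounding edge-averages into each of these identities and simplifying, I would check that it collapses to one of the side-cell relations already imposed; hence both hold on the one-dimensional parameter set, producing a genuine nonzero $v\in V_K^{\rm R}$, and so $\dim V_K^{\rm R}=1$. I expect this last consistency check on the central cell to be the \emph{main obstacle}: it is the one place where an unavoidable explicit computation with the local $P_2$ shape functions occurs, and it is precisely what forces the dimension to be $1$ rather than $0$. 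Equivalently one could exhibit the minimal-support generator of $V_{h0}^{\rm R}$ by a closed formula and verify directly that it lies in $V_K^{\rm R}$, which amounts to the same computation; the remaining ingredients — independence of the five corner conditions, the rank-three bound, and injectivity — are elementary.
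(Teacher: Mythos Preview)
The paper does not prove this lemma; it is quoted from \cite[Lemma 15]{Shuo.Zhang2020}. What the paper does do, immediately after the statement, is write down the generator explicitly: the values $\{v_{m,n}^K\}$, $\{u_{m,n}^K\}$, $\{z_{m,n}^K\}$ at all interior vertices and edge midpoints are given in closed form (equations \eqref{eq:values1Basis}--\eqref{eq:values3Basis}) in terms of the single free parameter $v_{1,1}^K$, with the ratios $\gamma_x^K,\gamma_y^K$ encoding the mesh geometry. So the paper's contribution here is purely the construction of the nonzero element, which together with the cited dimension bound gives $\dim V_K^{\rm R}=1$.

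Your route is different and conceptually cleaner for the upper bound: you organise the $P_2$-reduction constraints cell by cell, reduce the problem to four corner parameters subject to four side-cell relations, and read off rank $\ge 3$ from a $3\times3$ minor. That part is fine and gives $\dim V_K^{\rm R}\le 1$ without any explicit shape-function calculation. For the lower bound, however, your write-up has a structural slip: you assert ``the four side-cell relations have a nontrivial kernel'' and then proceed to check that a kernel vector extends consistently to the central cell. But the nontriviality of the kernel (equivalently, that the $4\times4$ determinant vanishes, i.e.\ that the relation $\alpha_1\gamma_3\beta_4\delta_2=\alpha_2\gamma_1\beta_3\delta_4$ holds in the obvious labelling) is itself a claim that must be verified by the same sort of explicit computation you flag for the central cell. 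Your final sentence acknowledges this --- exhibiting the closed-form generator does both jobs at once --- and that is exactly what the paper supplies via \eqref{eq:values1Basis}--\eqref{eq:values3Basis}. So your plan is correct, but be aware that the ``main obstacle'' is really two checks (side-cell determinant vanishes \emph{and} central-cell identities hold), or equivalently one explicit construction, not just the central-cell step alone.
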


Now we give a detailed description of the functions in $V_{K}^{\rm{R}}$. 
For any $\varphi \in V_{K}^{R}$, we denote $v_{m,n}^{K}:=\varphi(X_{m,n}^{K})$, $u_{m,n}^{K}:=\partial_y\varphi(Y_{m,n}^{K})$, and $z_{m,n}^{K}:=\partial_x\varphi(Z_{m,n}^{K})$. Then the values of $\{v_{m,n}^{K}\}$,  $\{u_{m,n}^{K}\}$, and  $\{z_{m,n}^{K}\}$ satisfy that
\begin{align}
\big[v_{1,1}^{K},\ v_{2,1}^{K},\ v_{1,2}^{K},\ v_{2,2}^{K}\big] & = \big[1,  \gamma_{x}^{K},\ \gamma_{y}^{K},\ \gamma_{x}^{K}\gamma_{y}^{K}\big]\, v_{1,1}^{K}; \label{eq:values1Basis} 
\\
\big[u_{1,1}^{K},\ u_{2,1}^{K},\ u_{3,1}^{K},\ u_{1,2}^{K},\ u_{2,2}^{K},\ u_{3,2}^{K}\big] & = \Big[\tfrac{1}{H_{K,-1}},\ \tfrac{1+\gamma_{x}^{K}}{H_{K,-1}},\ \tfrac{\gamma_{x}^{K}}{H_{K,-1}},\ \tfrac{-\gamma_{y}^{K}}{H_{K,1}},\ \tfrac{-(1+\gamma_{x}^{K})\gamma_{y}^{K}}{H_{K,1}},\ \tfrac{-\gamma_{x}^{K}\gamma_{y}^{K}}{H_{K,1}}\Big]\, v_{1,1}^{K};\label{eq:values2Basis}
\\
\big[z_{1,1}^{K},\ z_{2,1}^{K},\ z_{1,2}^{K},\ z_{2,2}^{K},\ z_{1,3}^{K},\ z_{2,3}^{K}\big] &= \Big[\tfrac{1}{L_{K,-1}},\ \tfrac{-\gamma_{x}^{K}}{L_{K,1}},\ \tfrac{1+\gamma_{y}^{K}}{L_{K,-1}},\ \tfrac{-(1+\gamma_{y}^{K})\gamma_{x}^{K}}{L_{K,1}},\ \tfrac{\gamma_{y}^{K}}{L_{K,-1}},\ \tfrac{-\gamma_{x}^{K}\gamma_{y}^{K}}{L_{K,1}}\Big]\, v_{1,1}^{K},\label{eq:values3Basis}
\end{align}
where $\gamma_{x}^{K}= \frac{1+\frac{L_{K}}{L_{K,-1}}}{1+\frac{L_{K}}{L_{K,1}}}$ and $\gamma_{y}^{K}= \frac{1+\frac{H_{K}}{H_{K,-1}}}{1+\frac{H_{K}}{H_{K,1}}}$. For each vertice $X_{m,n}^{K}$, midpoint $Y_{m,n}^{K}$, or midpoint $Z_{m,n}^{K}$ on the boundary of $\mathcal{M}_{K}$, $v_{m,n}^{K}$, $u_{m,n}^{K}$, or $z_{m,n}^{K}$ equals to zero correspondingly. Therefore, $\varphi \in V_{K}^{\rm{R}}$ is uniquely determined, once $\varphi(X_{1,1}^{K})$ is fixed. 
\begin{definition}\label{def:3x3basis}{
\rm
Let $\mathcal{M}_{K}$ be a $3\times 3$ patch with a center cell $K$.  Denote, by $\varphi_{K}$, a basis function supported on $\mathcal{M}_{K}$, which satisfies
\begin{itemize}
\item[(a)]$\varphi_{K}(x,y)\equiv 0,\quad \forall (x,y)\notin  \mathcal{M}_{K}$;
\item[(b)] $\varphi_{K}|_{\mathcal{M}_{K}} \in V_{K}^{\rm{R}}$, and specially $\varphi_{K}(X_{1,1}^{K}) = \frac{L_{K,-1}}{L_{K,-1}+L_{K}}\cdot \frac{H_{K,-1}}{H_{K,-1}+H_{K}}$.
	\end{itemize}	
}
\end{definition}

\begin{remark}
{\rm
The assumption of $\varphi_{K}(X_{1,1}^{K}) = \frac{L_{K,-1}}{L_{K,-1}+L_{K}}\cdot \frac{H_{K,-1}}{H_{K,-1}+H_{K}}$ in the definition is not necessary, but can facilitate the subsequent analysis of the properties of basis functions. 
}
\end{remark}

\begin{proposition}\label{prop:scaling}
Let $\varphi_{K}$ be a function given in Definition~\ref{def:3x3basis}. For $k \geqslant 0$, it holds that
\begin{equation}\label{eq:normEstimate}
\big|\varphi_{K}|_{k,T}\leq C_{\gamma_{0}}h_{T}^{1-k}, \quad \forall T \subset \mathcal{M}_{K}, 
\end{equation}
where $C_{\gamma_{0}}$ represents a positive constant only dependent on the regularity constant $\gamma_{0}$.
\end{proposition}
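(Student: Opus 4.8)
The plan is a scaling argument against a fixed reference cell; the only delicate point is to read off from the explicit formulas \eqref{eq:values1Basis}--\eqref{eq:values3Basis} that the nodal parameters of $\varphi_K$ are controlled by $\gamma_0$ alone. Since $\varphi_K|_T\in P_2(T)$, the estimate is trivial once $k\geqslant 3$, so assume $k\in\{0,1,2\}$. First I would record the geometric consequences of the regularity assumption \eqref{eq:regularity}: the grid is locally quasi-uniform, so the nine cells constituting $\mathcal{M}_K$ all have both side lengths $\cequiv h_T$ with constants depending only on $\gamma_0$; in particular $L_{K,-1},L_K,L_{K,1}$ are mutually comparable and likewise $H_{K,-1},H_K,H_{K,1}$, whence $\gamma_x^K$ and $\gamma_y^K$ lie between two positive $\gamma_0$-dependent constants.

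Next, fix $T\subset\mathcal{M}_K$, let $F_T\colon\hat T\to T$ be the affine dilation of the unit square $\hat T$, and set $\hat p:=\varphi_K|_T\circ F_T\in P_2(\hat T)$. On the finite-dimensional space $P_2(\hat T)$ any seminorm is controlled by any norm; choosing the norm furnished by the absolute values of the four vertex evaluations together with the two normal-derivative evaluations at the midpoints of two adjacent edges — an elementary computation shows these six functionals are $P_2(\hat T)$-unisolvent — yields
\[
|\hat p|_{k,\hat T}\ \lesssim\ \sum_{i=1}^{4}|\hat p(\hat a_i)|+|\partial_{\hat x}\hat p(\hat m_L)|+|\partial_{\hat y}\hat p(\hat m_B)|
\]
with an absolute constant, where $\hat m_L,\hat m_B$ are the midpoints of the left and bottom edges of $\hat T$. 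Transporting back through $F_T$, the vertex terms become vertex values $v^{K}_{m,n}$ of $\varphi_K$ and the derivative terms become $h_{x,T}z^{K}_{m,n}$ and $h_{y,T}u^{K}_{m,n}$, with any term whose node lies on $\partial\mathcal{M}_K$ simply vanishing.

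It then remains to bound these pulled-back quantities. By Definition~\ref{def:3x3basis} one has $0<v^{K}_{1,1}\leqslant 1$, so \eqref{eq:values1Basis} and the boundedness of $\gamma_x^K,\gamma_y^K$ give $|v^{K}_{m,n}|\leqslant C_{\gamma_0}$. The crucial observation is that the normalization factor $v^{K}_{1,1}=\frac{L_{K,-1}}{L_{K,-1}+L_K}\cdot\frac{H_{K,-1}}{H_{K,-1}+H_K}$ absorbs the denominators appearing in \eqref{eq:values2Basis}--\eqref{eq:values3Basis}: for instance $\frac{1}{H_{K,-1}}v^{K}_{1,1}\leqslant\frac{1}{H_{K,-1}+H_K}$ and $\frac{\gamma_y^K}{H_{K,1}}v^{K}_{1,1}\leqslant\frac{\gamma_y^K}{H_{K,1}}$, both $\cequiv h_T^{-1}$; treating the remaining entries and the $z$-row symmetrically gives $|u^{K}_{m,n}|+|z^{K}_{m,n}|\leqslant C_{\gamma_0}h_T^{-1}$, hence $h_{x,T}|z^{K}_{m,n}|+h_{y,T}|u^{K}_{m,n}|\leqslant C_{\gamma_0}$. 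Substituting into the display gives $|\hat p|_{k,\hat T}\leqslant C_{\gamma_0}$, and the standard change-of-variables identity $|\varphi_K|_{k,T}\cequiv h_T^{1-k}|\hat p|_{k,\hat T}$ (again using $h_{x,T}\cequiv h_{y,T}\cequiv h_T$) delivers \eqref{eq:normEstimate}.

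I expect the main obstacle to be precisely this last matching step: verifying that the explicit $1/L_{K,\pm1}$ and $1/H_{K,\pm1}$ weights in \eqref{eq:values2Basis}--\eqref{eq:values3Basis}, after multiplication by the normalized $v^{K}_{1,1}$, stay $O(h_T^{-1})$ uniformly — so that nothing blows up when a neighbouring cell of $K$ degenerates in size — which is exactly why the specific value of $\varphi_K(X^{K}_{1,1})$ was built into Definition~\ref{def:3x3basis}; everything else is routine scaling together with the local quasi-uniformity coming from \eqref{eq:regularity}.
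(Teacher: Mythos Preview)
Your proposal is correct and follows essentially the same approach as the paper: both arguments read off from \eqref{eq:values1Basis}--\eqref{eq:values3Basis} that the nodal parameters of $\varphi_K$ on any subcell $T$ satisfy $|v^K_{m,n}|\leqslant C_{\gamma_0}$ and $|u^K_{m,n}|,|z^K_{m,n}|\leqslant C_{\gamma_0}h_T^{-1}$ under local quasi-uniformity, and then convert these bounds into the seminorm estimate by a scaling/norm-equivalence step. The only cosmetic difference is that the paper expands $\varphi_K|_T$ in the eight rectangular Morley shape functions $p^{\rm M}_s,q^{\rm M}_t$ and quotes their standard bounds $|p^{\rm M}_s|_{k,T}\lesssim h_T^{1-k}$, $|q^{\rm M}_t|_{k,T}\lesssim h_T^{2-k}$, whereas you pull back to the unit square and invoke norm equivalence on $P_2(\hat T)$ against a six-functional unisolvent set directly.
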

The proof is postponed to Section \ref{sec:pfprop:scaling}

Recall that $\mathcal{K}_{h}^{i}$ and $\mathcal{K}_{h}^{b}$ represent the set of interior cells and boundary cells, respectively. For any $K\in \mathcal{K}_{h}^{i}$, there exists a $3\times 3$ patch $\mathcal{M}_{K}$ which is within $\Omega$, i.e., its nine cells locate in $\mathcal{G}_{h}$.
\begin{lemma}\rm(\cite{Shuo.Zhang2020})
$\displaystyle V^{\rm R}_{h0}={\rm span}\{\varphi_K\}_{K\in\mathcal{K}_{h}^{i}}$.
\end{lemma}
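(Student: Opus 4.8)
The plan is to prove the two inclusions separately. For ${\rm span}\{\varphi_K\}_{K\in\mathcal{K}_h^i}\subseteq V_{h0}^{\rm R}$, fix $K\in\mathcal{K}_h^i$. Since $K$ is interior, $\overline{\mathcal{M}_K}\subset\Omega$ with $\overline{\mathcal{M}_K}\cap\partial\Omega=\emptyset$, so the $H_0^2$-type boundary conditions defining $V_{h0}^{\rm R}$ impose nothing on $\varphi_K$. By Definition~\ref{def:3x3basis}, $\varphi_K|_{\mathcal{M}_K}\in V_K^{\rm R}=V_{h0}^{\rm R}(\mathcal{M}_K)$, so $\varphi_K$ is piecewise $P_2$ and its vertex values and edge normal-derivative averages are continuous at every interior node of $\mathcal{M}_K$ and across every interior edge of $\mathcal{M}_K$; the homogeneous conditions built into $V_{h0}^{\rm R}(\mathcal{M}_K)$ further force the vertex values and edge normal-derivative averages of $\varphi_K$ to vanish on $\partial\mathcal{M}_K$, which matches the zero extension of $\varphi_K$ to $\Omega\setminus\mathcal{M}_K$; hence those continuity requirements persist across $\partial\mathcal{M}_K$, and $\varphi_K\in V_{h0}^{\rm R}$.

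For the reverse inclusion I would first establish that $\{\varphi_K\}_{K\in\mathcal{K}_h^i}$ is linearly independent, and then match dimensions. The key remark is that, since $\varphi_K|_{\mathcal{M}_K}$ vanishes at every vertex on $\partial\mathcal{M}_K$ while the only interior vertices of the $3\times3$ patch $\mathcal{M}_K$ are the four vertices of $K$, one has $\varphi_K(a)\neq0$ only if $a$ is a vertex of $K$; and by \eqref{eq:values1Basis}, $\varphi_K$ is nonzero at each of the four vertices of $K$. Order $\mathcal{K}_h^i$ by the lexicographic order of the cell barycentres (first $x$, then $y$), and suppose $\sum_K c_K\varphi_K=0$. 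Let $K_j$ be the $j$-th interior cell and $a_j$ its lower-left vertex; as $K_j$ is interior, $a_j$ is an interior vertex, hence shared by exactly four cells, namely $K_j$ together with three cells each of which lies weakly to the left of $K_j$ or directly below it, so that these three are either non-interior or precede $K_j$ in the order. Evaluating the identity at $a_j$ and using the induction hypothesis leaves only $c_{K_j}\varphi_{K_j}(a_j)=0$; since $\varphi_{K_j}(a_j)\neq0$, we get $c_{K_j}=0$, and induction on $j$ gives linear independence, whence $\dim V_{h0}^{\rm R}\ge\#\mathcal{K}_h^i$.

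It then remains to prove $\dim V_{h0}^{\rm R}\le\#\mathcal{K}_h^i$, which I expect to be the main obstacle. The idea is to view $V_{h0}^{\rm R}$ as the kernel, inside $V_{h0}^{\rm M}$, of the linear map $L$ that reads off on each cell the coefficients of $x^3$ and $y^3$ of $w_h|_K\in P_K^{\rm M}$ (these two coefficients vanish exactly when $w_h|_K\in P_2(K)$). Since the rectangular Morley element is unisolvent, $\dim V_{h0}^{\rm M}=\#\mathcal{N}_h^i+\#\mathcal{E}_h^i$ (one free parameter per interior node and per interior edge), so the claim amounts to showing that the rank of $L$ equals $\#\mathcal{N}_h^i+\#\mathcal{E}_h^i-\#\mathcal{K}_h^i$, i.e.\ that the reduction to $P_2$ imposes exactly that many independent conditions; the delicate part is to exclude any accidental rank drop coming from the coupling of neighbouring cells through their shared degrees of freedom together with the boundary conditions. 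I would settle this by an incidence (Euler-characteristic) count on the rectangular grid --- using $\#\mathcal{N}_h-\#\mathcal{E}_h+\#\mathcal{G}_h=1$ and $\#\mathcal{N}_h^b=\#\mathcal{E}_h^b$, and the fact that the two cubic coefficients of an RM shape function are two fixed, independent linear combinations of its eight local degrees of freedom --- anchored locally by $\dim V_{h0}^{\rm R}(\mathcal{M}_K)=1$; this is precisely the computation carried out in \cite{Shuo.Zhang2020}, from which the dimension count can simply be imported. Granting $\dim V_{h0}^{\rm R}=\#\mathcal{K}_h^i$, the linearly independent family $\{\varphi_K\}_{K\in\mathcal{K}_h^i}\subset V_{h0}^{\rm R}$ is a basis, and the asserted equality follows. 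An equivalent, constructive route that bypasses the global count is to run the same lexicographic peeling directly on a given $w_h\in V_{h0}^{\rm R}$, taking $c_{K_j}:=w_h^{(j-1)}(a_j)/\varphi_{K_j}(a_j)$ with $w_h^{(j-1)}:=w_h-\sum_{l<j}c_{K_l}\varphi_{K_l}$; by the same support-and-vertex argument the remainder $w_h-\sum_K c_K\varphi_K$ then lies in $V_{h0}^{\rm R}$ and vanishes at the lower-left vertex of every interior cell, and the remaining (and equally delicate) point is to show that this forces the remainder to vanish identically, which again rests on the local structure \eqref{eq:values1Basis}--\eqref{eq:values3Basis} and on $\dim V_{h0}^{\rm R}(\mathcal{M}_K)=1$.
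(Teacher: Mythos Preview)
The paper does not supply its own proof of this lemma; it is quoted verbatim from \cite{Shuo.Zhang2020}, so there is nothing in the present paper to compare your argument against.

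Your sketch is sound in outline. One small correction: your opening claim that $\overline{\mathcal{M}_K}\cap\partial\Omega=\emptyset$ whenever $K\in\mathcal{K}_h^i$ is false --- an interior cell one layer away from $\partial\Omega$ has its $3\times3$ patch abutting the boundary (the paper only asserts that the nine cells of $\mathcal{M}_K$ lie in $\mathcal{G}_h$). This does not break your argument, because you immediately observe that $\varphi_K$ vanishes (in vertex values and edge normal-derivative averages) on $\partial\mathcal{M}_K$; that observation alone handles both the case where $\partial\mathcal{M}_K$ meets $\partial\Omega$ and the case where it lies in the interior, so simply drop the erroneous sentence.

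Your linear-independence argument via lexicographic peeling at lower-left vertices is clean and correct: the four cells sharing $a_j$ are $K_j$ and three cells that either precede $K_j$ or are non-interior, and $\varphi_{K_j}(a_j)\neq0$ by \eqref{eq:values1Basis}. For the reverse inequality $\dim V_{h0}^{\rm R}\le\#\mathcal{K}_h^i$ you correctly identify the crux and defer to \cite{Shuo.Zhang2020}; since the present paper does exactly the same (citing the lemma wholesale), this is entirely appropriate. Your alternative ``constructive peeling'' route is also viable, but as you note it bottoms out in the same local rigidity statement ($\dim V_K^{\rm R}=1$ together with propagation from the lower-left vertex), so it is not genuinely independent of the cited dimension count.
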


\begin{proposition}\label{pro:property of phi 5x5}
Let $K$ be a cell in $ \mathcal{K}_{h}^{i}$, and $\mathcal{M}_{K}$ be its corresponding $3\times 3$ patch. Assume that the $5\times 5$ patch centered at $K$ is within $\Omega$, i.e., these $25$ elements locate in $\mathcal{G}_{h}$. Define $A_{K}:= \big\{K_{dl},\ K_{d},\ K_{dr},\ K_{l},\ K,\ K_{r},\ K_{ul},\ K_{u}, \ K_{ur} \big\};$ see Figure~\ref{fig:5x5basis}. Then $K$ is located in the supports of nine functions in $\big\{\varphi_{T}:  \ T\in A_{K}\big\}$, and
\begin{itemize}
\item[(a)] for any $v\in Q_{1}(\mathcal{M}_{K})$ and $(x,y)\in K$, it holds that
\begin{align*}
\sum_{T\in A_{K}} v(x_{T},y_{T})\,\varphi_{T}(x,y) = v(x,y) \quad \mbox{and}\quad 
\sum_{T\in A_{K}} (\fint_{T}v{\,d x d y })\, \varphi_{T}(x,y)= v(x,y),
\end{align*}
where $v(x_{T},y_{T})$ is the value of $v$ at the barycenter of $T$;
\item[(b)] for any $v \in P_{2}(\mathcal{M}_{K})$ and $(x,y)\in K$, it holds that
\begin{align*}
&\sum_{T\in A_{K}} r_{T}(v)\,\varphi_{T}(x,y) = v(x,y)\ \mbox{ with} \ \  r_{T}(v) = v(x_{T},y_{T}) - \tfrac{1}{8}\big(L_{T}^{2}\,\tfrac{\partial^{2}v}{\partial x^{2}} + H_{T}^{2}\,\tfrac{\partial^{2}v}{\partial y^{2}} \big),
\\
&\sum_{T\in A_{K}} t_{T}(v)\,\varphi_{T}(x,y) =v(x,y)\ \mbox{ with} \ \ t_{T}(v) = \fint_{T}v {\,d x d y }  - \tfrac{1}{6}\big(L_{T}^{2}\,\tfrac{\partial^{2}v}{\partial x^{2}}  + H_{T}^{2}\,\tfrac{\partial^{2}v}{\partial y^{2}} \big);
\end{align*}
\item[(c)] for any $(x,y)\in K$, there exists a set of coefficients $\{d_{T}\}$, such that 
\begin{align*}
\sum_{T\in A_{K}} (d_{T}L_{T}H_{T})\,\varphi_{T}(x,y) = 0, \quad \forall (x,y)\in K,
\end{align*}
where $d_{K} = \pm 1$, $d_{K_{d}} = d_{K_{u}} = d_{K_{l}} = d_{K_{r}} = - d_{K}$, and $d_{K_{dl}} = d_{K_{dr}} = d_{K_{ul}} = d_{K_{ur}}  = d_{K}$.
\end{itemize}
\end{proposition}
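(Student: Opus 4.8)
### Proof proposal

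The plan is to build everything from the explicit description of the single basis-patch function $\varphi_K$ given in equations \eqref{eq:values1Basis}--\eqref{eq:values3Basis} together with the scaling estimate of Proposition~\ref{prop:scaling}. The fact that $K$ lies in the supports of exactly the nine functions $\{\varphi_T:T\in A_K\}$ is immediate: $K$ is covered by the $3\times3$ patch $\mathcal{M}_T$ precisely when $T$ is one of the nine cells whose $3\times3$ neighbourhood contains $K$, and these are exactly the cells of $A_K$ (this is where the $5\times5$ hypothesis is used, to guarantee each such $\mathcal{M}_T\subset\Omega$ so that $\varphi_T$ is well defined). So the substance is parts (a), (b), (c), which are all statements about the restrictions $\varphi_T|_K$ for $T\in A_K$.

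For part (a): the function $w:=\sum_{T\in A_K}v(x_T,y_T)\varphi_T$ lies in $V^{\rm R}_{h0}(\mathcal{M}')$ for a suitable patch $\mathcal{M}'$ around $K$, hence $w|_K\in P_2(K)$; I would verify $w=v$ on $K$ by checking they agree at the degrees of freedom of the RM element on $K$ — the four vertex values and the four edge-normal averages. Each such functional evaluated on $\sum_T v(x_T,y_T)\varphi_T$ unwinds, via \eqref{eq:values1Basis}--\eqref{eq:values3Basis}, into a weighted combination of the values $v(x_T,y_T)$ at barycenters of the neighbouring cells; since $v$ is (bi)linear, each such combination reproduces the correct value/average of $v$. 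This is essentially the assertion that the $\varphi_T$ form a partition-of-unity-type system reproducing $Q_1$, and the weights $\gamma_x^K,\gamma_y^K$ were manufactured exactly so that this holds; I would do the computation once for a vertex functional and once for an edge-average functional and note the rest follow by the symmetry of the patch. The second identity (barycenter averages of $v$) follows because for $v\in Q_1$, $\fint_T v = v(x_T,y_T)$.

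For part (b): I would write $v\in P_2(\mathcal{M}_K)$ as $v = v_1 + q$ where $v_1\in Q_1$ and $q$ is the purely quadratic remainder, and apply part (a) to $v_1$. The point of the correction term $r_T(v) = v(x_T,y_T)-\tfrac18(L_T^2\partial_{xx}v+H_T^2\partial_{yy}v)$ is that, on the interior cell $K$, the RM degrees of freedom applied to $\sum_T r_T(v)\varphi_T$ must reproduce the quadratic $v$; since $\partial_{xx}v,\partial_{yy}v$ are constants, the correction is a constant shift of the coefficients, and one computes (again via \eqref{eq:values1Basis}--\eqref{eq:values3Basis}, now on a patch whose cells may have varying sizes) that the quadratic discrepancy left by the $Q_1$-reproduction is cancelled exactly by the $-\tfrac18 L_T^2$ and $-\tfrac18 H_T^2$ terms — the constants $\tfrac18$ and $\tfrac16$ being the values of $\fint$ over $[-\tfrac12,\tfrac12]$ of $\frac{t^2}{?}$ type integrals, i.e. they come from $\fint_{e}(\partial_n)$ of $x^2$ and $\fint_T x^2$ respectively on a reference cell centered at the origin. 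I would reduce to a reference patch by affine scaling in each coordinate and check the two scalar identities there.

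For part (c): this is the linear-dependence (non-unisolvence) statement. The combination $\sum_{T\in A_K} d_T L_T H_T\varphi_T$ with the alternating $\pm$ sign pattern ($+$ on $K$ and the four corner cells, $-$ on the four edge-adjacent cells) should vanish identically on $K$; I would again verify this by showing all eight RM degrees of freedom on $K$ annihilate it, using \eqref{eq:values1Basis}--\eqref{eq:values3Basis}. Heuristically this is the statement that the second-difference operator in the ``checkerboard'' pattern kills $Q_1$ and also kills the specific quadratic corrections, so the $P_2$-reproducing system of part (b) has a one-dimensional kernel; the weights $d_T L_T H_T$ are the natural ones making the telescoping work on a non-uniform grid.

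The main obstacle I expect is bookkeeping, not conceptual depth: the patch around $K$ relevant to the sum $\sum_{T\in A_K}\varphi_T$ is a $5\times5$ block with generally \emph{distinct} edge lengths in every row and column, so the ratios $\gamma_x^T,\gamma_y^T$ vary from cell to cell and the cancellations in (b) and (c) are not the clean ones of a uniform grid. Organizing the computation so that it is transparent — probably by fixing the reference frame at $K$, expressing every neighbouring length as $L_K$ times a mesh ratio, and exploiting the product (tensor) structure visible in \eqref{eq:values1Basis} to separate the $x$- and $y$-directions — will be the real work. Because of its length I would carry out the detailed verification of (b) and (c) in an appendix, as the paper says it does with other technical proofs.
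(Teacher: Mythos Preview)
Your proposal is correct and follows essentially the same route as the paper's proof: for (a) and (c) you check agreement/vanishing at the eight RM degrees of freedom on $K$ using the explicit data \eqref{eq:values1Basis}--\eqref{eq:values3Basis}, exactly as the paper does. For (b) the paper organizes the computation slightly differently --- rather than splitting $v=v_1+q$, it directly applies the part-(a) operator to the monomials $v=x^2$ and $v=y^2$, finds the residuals $\tfrac14\sum_T L_T^2\varphi_T$ and $\tfrac14\sum_T H_T^2\varphi_T$ (respectively $\tfrac13$ for the averaged version), and then observes that subtracting $\tfrac18 L_T^2\partial_{xx}v$ etc.\ cancels them --- but this is the same calculation you describe, just with the decomposition left implicit; note that your ``reduce to a reference patch by affine scaling'' shortcut does not go through because the cells have independent side lengths, so you will in fact have to do the direct bookkeeping you anticipate in your final paragraph.
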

The proof is postponed to Section \ref{sec:pfpro:property of phi 5x5}.

\begin{figure}[!htbp]
\centering
\includegraphics[height=0.34\hsize]{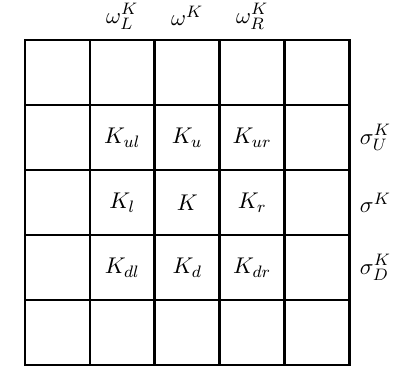}
\caption{Illustration of a $5 \times 5$ patch centered at $K$.}\label{fig:5x5basis}
\end{figure}

Notice that, for a single function $\varphi_{T}$, it is dependent on the lengths and heights of $\mathcal{M}_{T}$, i.e., $\big\{L_{K,-1},\ L_{K},\ L_{K,1}\big\}$ and $\big\{H_{K,-1},\ H_{K},\ H_{K,1}\big\}$. The following remark says that specific combinations of several functions, when restricted on specific cells, can be independent on some lengths or heights correspondly.
 As is shown in  Figure~\ref{fig:5x5basis}, we denote 
\begin{align*}
\omega_{L}^{K}:= K_{dl}\cup K_{l}\cup K_{ul}, \quad \omega^{K}:= K_{d}\cup K\cup K_{u}, \quad \omega_{R}^{K}:= K_{dr}\cup K_{r}\cup K_{ur}, \\
\sigma_{D}^{K} := K_{dl}\cup K_{d}\cup K_{dr}, \quad \sigma^{K} := K_{l}\cup K \cup K_{r}, \quad \sigma_{U}^{K} := K_{ul}\cup K_{u}\cup K_{ur}.
\end{align*}
 This result is obtained by direct calculation.

\begin{remark}\label{rem:adjoint basis property}
{\rm
Suppose that $K$ is located in the supports of nine functions $\big\{\varphi_{T}:  \ T\in A_{K}\big\}$.
For $v \in \{1,\ x, \ y,\ xy, \ x^{2}, \ y^{2}\}$, it holds that}
\begin{itemize}
\item[(a)]
{\rm The functions 
$\big(r_{K}(v)\,\varphi_{K} + r_{K_{r}}(v)\,\varphi_{K_{r}}\big)|_{\omega_{L}^{K}\cup \omega^{K}}$,  $\big(t_{K}(v)\,\varphi_{K} + t_{K_{r}}(v)\,\varphi_{K_{r}}\big)|_{\omega_{L}^{K}\cup \omega^{K}}$, and $\big(d_{K}L_{K}H_{K}\,\varphi_{K} + d_{K_{r}} L_{K_{r}}H_{K_{r}}\,\varphi_{K_{r}}\big)|_{\omega_{L}^{K}\cup \omega^{K}}$ are independent of $L_{K,1} \ \big( =L_{K_{r}}\big)$ and $L_{K_{r},1}.$ 
}
\item[(b)]
{\rm
The functions 
$\big(r_{K_{l}}(v)\,\varphi_{K_{l}} + r_{K}(v)\,\varphi_{K}\big)|_{\omega^{K} \cup \omega_{R}^{K}}$, $\big(t_{K_{l}}(v)\,\varphi_{K_{l}} + t_{K}(v)\,\varphi_{K}\big)|_{\omega^{K}\cup \omega_{R}^{K}}$ and $\big(d_{K_{l}}L_{K_{l}}H_{K_{l}}\,\varphi_{K_{l}} + d_{K}L_{K}H_{K}\,\varphi_{K}\big)_{\omega^{K}\cup \omega_{R}^{K}}$ are  independent of $L_{K_{l},-1}$ and $L_{K,-1} \ \big(= L_{K_{l}}\big).$
}
\item[(c)]
{\rm The functions 
$\big(r_{K}(v)\,\varphi_{K} + r_{K_{u}}(v)\,\varphi_{K_{u}}\big)|_{\sigma_{D}^{K}\cup \sigma^{K}}$, $\big(t_{K}(v)\,\varphi_{K} + t_{K_{u}}(v)\,\varphi_{K_{u}}\big)|_{\sigma_{D}^{K}\cup \sigma^{K}}$ and $\big(d_{K}L_{K}H_{K}\,\varphi_{K} +d_{K_{u}}L_{K_{u}}H_{K_{u}}\,\varphi_{K_{u}}\big)_{\sigma_{D}^{K}\cup \sigma^{K}}$ are independent of $H_{K,1} \ \big(= H_{K_{u}}\big)$ and $H_{K_{u},1}.$
}
\item[(d)]
{\rm The functions
$\big(r_{K_{d}}(v)\,\varphi_{K_{d}} + r_{K}(v)\,\varphi_{K}\big)|_{\sigma^{K}\cup \sigma_{U}^{K}}$, $\big(t_{K_{d}}(v)\,\varphi_{K_{d}} + t_{K}(v)\,\varphi_{K}\big)|_{\sigma^{K}\cup \sigma_{U}^{K}}$ and $\big(d_{K_{d}}L_{K_{d}}H_{K_{d}}\,\varphi_{K_{d}} + d_{K}L_{K}H_{K}\,\varphi_{K}\big)_{\sigma^{K}\cup \sigma_{U}^{K}}$ are independent of $H_{K_{d},-1}$ and $H_{K,-1}  \ \big(=H_{K_{d}}\big).$
}
\end{itemize}
\end{remark}

As a basic property of $V_{h0}^{\rm R}$, a main result of the paper is the lemma below. 
\begin{lemma}[discrete analogue of \eqref{eq:Grisvard}] \label{lem: discrete_property_1}For $u_h, v_h \in V_{h0}^{\rm R}$, 
\begin{equation}  
\label{eq: discrete strengthened Miranda-Talenti estimate}
\sum_{K \in \mathcal{G}_h} \int_K \Delta u_h \cdot \Delta v_h = \sum_{K \in \mathcal{G}_h} \int_K \nabla^2u_h : \nabla^2 v_h.
\end{equation}
\end{lemma}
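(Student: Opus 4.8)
The statement is the discrete analogue of the identity $\int_\Omega(\Delta w\,\Delta v-\nabla^2w:\nabla^2v)=0$ for $w,v\in H^2_0(\Omega)$, which in the smooth case rests on the pointwise divergence form
\[
\Delta w\,\Delta v-\nabla^2w:\nabla^2v=\operatorname{div}\mathbf{G}(w,v),\qquad
\mathbf{G}(w,v):=\big(\partial_xw\,\partial_{yy}v-\partial_yw\,\partial_{xy}v,\ \partial_yw\,\partial_{xx}v-\partial_xw\,\partial_{xy}v\big),
\]
valid for all $w,v\in C^2$, together with the fact that on an edge $\mathbf{G}(w,v)\cdot\mathbf{n}=(\partial_{\mathbf{n}}w)(\partial_{\mathbf{t}\mathbf{t}}v)-(\partial_{\mathbf{t}}w)(\partial_{\mathbf{t}}\partial_{\mathbf{n}}v)$, which vanishes on $\partial\Omega$ when $w=v=0$ there. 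The plan is to imitate this. Applying the identity cellwise to $u_h,v_h\in V_{h0}^{\rm R}$ (which restrict to genuine quadratic polynomials on each rectangle) and summing, \eqref{eq: discrete strengthened Miranda-Talenti estimate} becomes the assertion that $\sum_{e\in\mathcal{E}_h^i}\int_e\llbracket\mathbf{G}(u_h,v_h)\cdot\mathbf{n}_e\rrbracket_e+\int_{\partial\Omega}\mathbf{G}(u_h,v_h)\cdot\mathbf{n}=0$. Because the mesh is rectangular, on every edge $\mathbf{n}_e$ and $\mathbf{t}_e$ are coordinate directions, so $\partial_{\mathbf{t}\mathbf{t}}v_h$ and $\partial_{\mathbf{t}}\partial_{\mathbf{n}}v_h$ are pure second derivatives of a quadratic and hence constants on the adjacent cells; using this, together with the fact that $\fint_e\partial_{\mathbf{n}_e}u_h$ and the vertex values of $u_h$ are continuous across interior edges and vanish on $\partial\Omega$ --- exactly the defining conditions of $V_{h0}^{\rm R}$ --- the boundary integral disappears and, after the product-jump rule $\llbracket ab\rrbracket=\{a\}\llbracket b\rrbracket+\llbracket a\rrbracket\{b\}$ (with $\{\cdot\}$ the edge average), each interior-edge integral collapses to $\big(\int_e\partial_{\mathbf{n}_e}u_h\big)\,\llbracket\partial_{\mathbf{t}\mathbf{t}}v_h\rrbracket_e-\big(u_h(a_e^+)-u_h(a_e^-)\big)\,\llbracket\partial_{\mathbf{t}}\partial_{\mathbf{n}_e}v_h\rrbracket_e$, where $a_e^\pm$ are the endpoints of $e$. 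Writing $\Delta u_h\Delta v_h-\nabla^2u_h:\nabla^2v_h=(\partial_{xx}u_h\partial_{yy}v_h-\partial_{xy}u_h\partial_{xy}v_h)+(\partial_{yy}u_h\partial_{xx}v_h-\partial_{xy}u_h\partial_{xy}v_h)$, it then suffices to establish
\[
\sum_{K\in\mathcal{G}_h}\int_K\partial_{xx}u_h\,\partial_{yy}v_h=\sum_{K\in\mathcal{G}_h}\int_K\partial_{xy}u_h\,\partial_{xy}v_h\qquad\text{for all }u_h,v_h\in V_{h0}^{\rm R},
\]
since applying this both to $(u_h,v_h)$ and to $(v_h,u_h)$ makes both bracketed sums vanish.

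This reduced identity is the heart of the proof, and here the reduced ($P_2$) structure is essential --- for the full rectangular Morley space it is false, the cubic bubbles already spoiling the boundary-edge step since then $\partial_{\mathbf{t}\mathbf{t}}v_h$ is no longer constant on the adjacent cell. Note that the interior-edge terms above do \emph{not} cancel one edge (or one cell) at a time; they re-sum to exactly the difference one wants to show is zero, so the argument must use the reduced structure globally. I would prove it through the degrees of freedom: for $w_h|_K\in P_2(K)$ one has the exact relations $|K|\,\partial_{xy}w_h|_K=w_h(a_{NE})-w_h(a_{NW})-w_h(a_{SE})+w_h(a_{SW})$ (an alternating sum of the four vertex values) and $|K|\,\partial_{xx}w_h|_K=h_{y,K}\big(\fint_{e_r}\partial_xw_h-\fint_{e_l}\partial_xw_h\big)$ (with $e_r,e_l$ the right/left edges, and the mirror relation for $\partial_{yy}$), so that both sides become bilinear in the vertex and edge degrees of freedom; the reduced space is then pinned down, cell by cell, by the two linear relations among its eight rectangular-Morley functionals that are forced by $\dim P_2(K)=6$, which express the ``top minus bottom'' and ``right minus left'' vertex-value combinations in terms of sums of the two corresponding edge-normal values. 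Substituting and summing over $\mathcal{G}_h$, the difference of the two sides telescopes, all interface contributions being absorbed by continuity and all boundary contributions killed by the homogeneous boundary conditions. Concretely --- and this is the form I would actually write out --- one uses $V_{h0}^{\rm R}=\operatorname{span}\{\varphi_K\}_{K\in\mathcal{K}_h^i}$: by bilinearity it suffices to check the identity for pairs $(\varphi_{K_1},\varphi_{K_2})$, which is nontrivial only when the $3\times3$ patches $\mathcal{M}_{K_1},\mathcal{M}_{K_2}$ share a cell, i.e.\ for finitely many relative positions (cut down further by translation and reflection symmetry), and each such case is a finite computation driven by the explicit degree-of-freedom values \eqref{eq:values1Basis}--\eqref{eq:values3Basis} and organised by the $P_2$-reproduction and linear-dependence identities of Proposition~\ref{pro:property of phi 5x5} together with Remark~\ref{rem:adjoint basis property}, whose purpose is precisely to make the relevant combinations of the $\varphi_T$ independent of the mesh sizes that would otherwise clutter the bookkeeping.

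The single genuine difficulty is thus this last step. Everything before it --- the divergence identity, the cellwise Gauss--Green formula, the disappearance of the boundary terms, the reduction to one scalar identity --- is mechanical once $\mathbf{G}$ is in hand; but the remaining cancellation is patch-global rather than cell-local (already for a single basis function $\varphi_K$ it balances a surplus of $\|\Delta_h\varphi_K\|^2$ on the central cell of $\mathcal{M}_K$ against a deficit on its four edge-neighbours, with the corner cells exactly balanced) and genuinely uses the quadratic local spaces together with the homogeneous boundary conditions. A clean write-up would therefore present the divergence-identity computation as motivation and then devote the real effort to the finite, basis-level verification.
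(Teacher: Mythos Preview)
Your proposal is correct, and the concrete approach you commit to at the end --- verify the identity by bilinearity on pairs of basis functions $(\varphi_{K_1},\varphi_{K_2})$ with overlapping supports, using the explicit degree-of-freedom data \eqref{eq:values1Basis}--\eqref{eq:values3Basis} --- is precisely the paper's proof. The paper is simply more blunt about it: it writes out the restriction of $\varphi_K$ to each of the nine subcells of $\mathcal{M}_K$ as an explicit linear combination of rectangular-Morley shape functions (via a coefficient matrix $\mathbb{COE}$), picks one representative relative position of $K_1$ and $K_2$, computes both sides on the shared cell by hand, and then asserts the remaining positions are similar. There is no divergence identity, no edge-term analysis, no reduction to the asymmetric identity $\sum_K\int_K\partial_{xx}u_h\,\partial_{yy}v_h=\sum_K\int_K\partial_{xy}u_h\,\partial_{xy}v_h$; the paper goes straight to brute-force evaluation.

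What your framing buys is insight into \emph{why} the $P_2$ reduction matters (constant second derivatives on cells make the edge integrals collapse to DOF combinations) and a clear warning that the cancellation is genuinely global, so no cell-local argument can work. What the paper's framing buys is brevity: since the computation must be done anyway, it skips the motivation and records the calculation. Your detour through the asymmetric identity is harmless but not load-bearing --- if you are going to verify on basis pairs regardless, you may as well verify the original symmetric identity directly, as the paper does, rather than a strictly stronger statement.
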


\begin{proof}
Firstly, let's revisit the Rectangular-Morley element on the unit square $(\xi,\eta)\in[0,1]^2$. Denote $\phi_1,...,\phi_4$ be the nodal basis function associated with vertices and $\phi_5,...,\phi_8$ be the nodal basis function associated with edges.

Now we explicitly present the basis function $\varphi_K$ in definition \ref{def:3x3basis} supported on $\mathcal{M}_K$, the patch centered at $K$ cf.Figure \ref{fig:3x3basis}. Denote $\{\varphi^K_i\}_{i = 1}^9$ by the restriction of $\varphi_K$ on the i-th subrectangle. By the definition of RRM element and the change of variables from the unit square to each subrectangle, 
we can get 
\begin{equation} \label{eq: patch_basis}
	\varphi^K_{i} = \sum_{j = 1}^8\mathbb{COE}_{i,j}\widehat{\phi_j} \cdot v_{1,1}^K
\end{equation}
where $\widehat{\phi_j} = \phi_j(\xi(x,y),\eta(x,y))$ and 

{
\footnotesize
$$
\mathbb{C O E}=\left[\begin{array}{cccccccc}
0 & 0 & 1 & 0 & 0 & 1 & 1 & 0 \\
0 & 1 & \gamma_x^K & 0 & \frac{L_K}{L_{K,-1}} & 1+\gamma_x^K & -\frac{L_k}{L_{K, 1}} \gamma_x^K & 0 \\
0 & \gamma_x^K & 0 & 0 & -\gamma_x^K & \gamma_x^K & 0 & 0 \\
0 & 0 & \gamma_y^K & 1 & 0 & -\frac{H_K}{H_{K, 1}} \gamma_y^K & 1+\gamma_y^K & \frac{H_K}{H_{K,-1}} \\
1 & \gamma_y^K & \gamma_y^K \gamma_x^K & \gamma_x^K & \frac{L_K}{L_{K,-1}}\left(1+\gamma_y^K\right) & -\frac{H_K}{H_{K, 1}} \gamma_y^K\left(1+\gamma_x^K\right) & -\frac{L_K}{L_{K, 1}} \gamma_x^K\left(1+\gamma_y^K\right) & \frac{H_K}{H_{K_{,}-1}\left(1+\gamma_x^K\right)} \\
\gamma_x^K & \gamma_y^K \gamma_x^K & 0 & 0 & -\left(1+\gamma_y^K\right) \gamma_x^K & -\frac{H_K}{H_{K, 1}} \gamma_y^K \gamma_x^K & 0 & \frac{H_K}{H_{K,-1}} \gamma_x^K \\
0 & 0 & 0 & \gamma_y^K & 0 & 0 & \gamma_y^K & -\gamma_y^K \\
\gamma_y^K & 0 & 0 & \gamma_y^K \gamma_x^K & \frac{L_K}{L_{K,-1}} \gamma_y^K & 0 & -\frac{L_K}{L_{K, 1}} \gamma_y^K \gamma_x^K & -\left(1+\gamma_x^K\right) \gamma_y^K \\
\gamma_x^K \gamma_y^K & 0 & 0 & 0 & -\gamma_y^K \gamma_x^K & 0 & 0 & -\gamma_y^K \gamma_x^K
\end{array}\right]
$$
}
actually we can simplify \eqref{eq: patch_basis} by setting $v_{1,1}^K = 1$. 
	
Here we just show the most complicated situation: Fix a rectangle $K_1$,
there exists a $5\times 5$ patch centered at $K_1$ cf.Figure \ref{fig:Miranda-Talentiestimate}(Left) denoted by $F_{K_1}$, such that for any $K'\in F_{K_1}$, $\mathcal{M}_{K'}$ is contained in the mesh. Notice that only these basis function $\{\varphi_{K'}: K' \in F_{K_1}\}$ can have the overlapping supports with $\varphi_{K_1}$. For the sake of brevity, we just check \eqref{eq: discrete strengthened Miranda-Talenti estimate} for $\varphi_{K_1}$ and $\varphi_{K_2}$ in detail cf.Figure \ref{fig:Miranda-Talentiestimate}(Right):
\begin{figure}[!htbp]
\centering
\includegraphics[height=0.34\hsize]{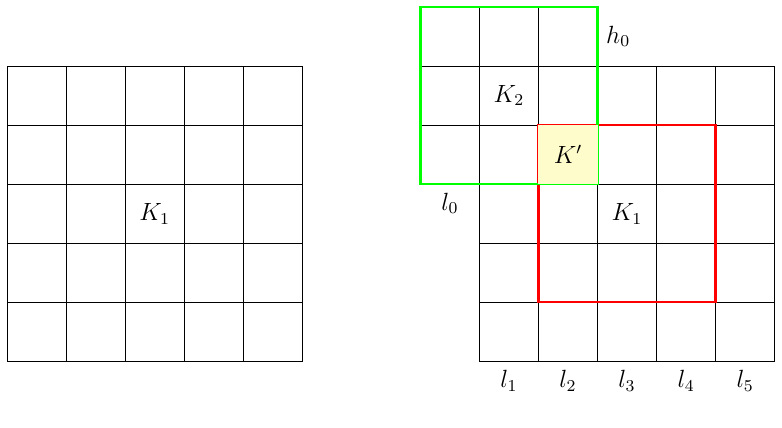}
\caption{Left: Illustration of $5 \times 5$ patch $F_{K_1}$. Right: Illustration of the basis function $\varphi_{K_1}$ and $\varphi_{K_2}$}\label{fig:Miranda-Talentiestimate}
\end{figure}

\begin{align*}
	\sum_{K \in \mathcal{G}_h} \int_K \Delta \varphi_{K_1} \cdot \Delta \varphi_{K_2} & = \int_{K'} \Delta \varphi_{K_1} \cdot \Delta \varphi_{K_2} \\
	& = \int_{K'}\left(\gamma_y^{K_1}\Delta\widehat{\phi_4} + \gamma_y^{K_1}\Delta\widehat{\phi_7}- \gamma_y^{K_1}\Delta\widehat{\phi_8}\right) \left(\gamma_x^{K_2}\Delta\widehat{\phi_2} - \gamma_x^{K_2}\Delta\widehat{\phi_5}+ \gamma_x^{K_2}\Delta\widehat{\phi_6}\right) \\
	& = |K'| \int_{[0,1]^2} \left[\frac{1}{l_2^2}\gamma_y^{K_1} \left(\frac{\partial^2\phi_4}{\partial\xi^2} + \frac{\partial^2\phi_7}{\partial\xi^2} - \frac{\partial^2\phi_8}{\partial\xi^2}\right) +  \frac{1}{h_2^2}\gamma_y^{K_1}\left(\frac{\partial^2\phi_4}{\partial\eta^2} + \frac{\partial^2\phi_7}{\partial\eta^2} - \frac{\partial^2\phi_8}{\partial\eta^2} \right)\right]\\
	& \cdot \left[\frac{1}{l_2^2}\gamma_x^{K_2} \left(\frac{\partial^2\phi_2}{\partial\xi^2} - \frac{\partial^2\phi_5}{\partial\xi^2} + \frac{\partial^2\phi_6}{\partial\xi^2}\right) +  \frac{1}{h_2^2}\gamma_x^{K_2}\left(\frac{\partial^2\phi_2}{\partial\eta^2} - \frac{\partial^2\phi_5}{\partial\eta^2} + \frac{\partial^2\phi_6}{\partial\eta^2} \right)\right]\\
	& = |K'| \int_{[0,1]^2} \left[ \frac{1}{l_2^2}\gamma_y^{K_1}\left( 3 - 6\xi + 6\xi - 2 \right) + \frac{1}{h_2^2}\gamma_y^{K_1}\left( 6\eta - 3 - (6\eta - 4)\right)\right]\\
	& \cdot \left[ \frac{1}{l_2^2}\gamma_x^{K_2}\left( 6\xi - 3 - (6\xi - 4) \right) + \frac{1}{h_2^2}\gamma_x^{K_2}\left( 3 - 6\eta + 6\eta - 2\right)\right]\\
	& = |K'| \int_{[0,1]^2} \left(\frac{1}{l_2^2}\gamma_y^{K_1} + \frac{1}{h_2^2}\gamma_y^{K_1}\right) \cdot \left(\frac{1}{l_2^2}\gamma_x^{K_2} +  \frac{1}{h_2^2}\gamma_x^{K_2}\right).
\end{align*}

\begin{align*}
	\sum_{K \in \mathcal{G}_h} \int_K \nabla^2\varphi_{K_1} : \nabla^2 \varphi_{K_2} & = \int_{K'} 	\nabla^2\varphi_{K_1} : \nabla^2 \varphi_{K_2} \\
	& = |K'| \int_{[0,1]^2} \gamma_y^{K_1}\nabla^2(\widehat{\phi_4} + \widehat{\phi_7}- \widehat{\phi_8}) : \gamma_x^{K_2}\nabla^2(\widehat{\phi_2} - \widehat{\phi_5} + \widehat{\phi_6})\\
	& = |K'| \int_{[0,1]^2} \gamma_y^{K_1}\gamma_x^{K_2} \left[\begin{array}{cc}
	1/l_2^2 , &-1/(l_2h_2)\\
	-1/(l_2h_2),&1/h_2^2	
\end{array}\right] : \left[\begin{array}{cc}
	1/l_2^2 , &-1/(l_2h_2)\\
	-1/(l_2h_2),&1/h_2^2	
\end{array}\right]\\
& = \sum_{K \in \mathcal{G}_h} \int_K \Delta \varphi_{K_1} \cdot \Delta \varphi_{K_2}.
\end{align*}
Similarly, \eqref{eq: discrete strengthened Miranda-Talenti estimate} holds for other basis functions by direct computation.

\end{proof}

\subsection{An auxiliary interpolator on an extended grid}
\label{sec:aied}

\subsubsection{Extension of the grid} 
We introduce here a virtual extension of the grids. With a quick glance to Figure \ref{figure:expansionmesh} illustrating the whole extension later, we firstly introduce in detail the rule how the grid is extended.

A boundary vertex in $\mathcal{N}_{h}^{b}$ is called a {\it corner node} if it is an intersection of two boundaries of $\partial \Omega$, which are not on the same line. It can be divided into {\it convex corner node} or {\it concave corner node}. It is assumed that any two corner nodes are not contained in the same cell.   

A boundary edge in $\mathcal{E}_{h}^{b}$ is called a {\it corner edge} if one of its endpoints is a corner node, otherwise it is named as a {\it non-corner boundary edge}.

\begin{itemize}
\item Consider a convex corner as shown in Figure~\ref{fig:expansion at a corner} (left). Let $L_{K,1}$, $L_{K,2}$, $H_{K,1}$, and $H_{K,2}$ be some constants close to the size of $K$.  Complete a $3\times 3$ patch, denoted by $\mathcal{M}_{K}$, outside the domain with $K$ as the center. The element to the right of $K$ is denoted as $K_{r}$, the element above it is denoted as $K_{u}$, and the element opposite to $K$ with respect to the corner node is denoted as $K_{ur}$. Adding a layer of rectangles outside $\mathcal{M}_{K}$, we obtain four patches $\mathcal{M}_{K}$, $\mathcal{M}_{K_{r}}$, $\mathcal{M}_{K_{u}}$, and $\mathcal{M}_{K_{ur}}$ associated with this convex corner. And four functions supported on them are denoted as $\varphi_{K}$, $\varphi_{K_{r}}$, $\varphi_{K_{u}}$, and $\varphi_{K_{ur}}$, respectively. 

\item Consider a concave corner as shown in Figure~\ref{fig:expansion at a corner} (right). We also extend the mesh to get four $3\times 3$ patches, each of which is centered at $K$, $K_{r}$, $K_{u}$, and an added element $K_{ur}$, and we derive four functions supported on four $3\times 3$ patches correspondingly. 

\item Consider a non-corner boundary edge shown in Figure~\ref{fig:expansion at a non-corner edge} (left). Let $H_{K,1}$ and $H_{K,2}$ be two arbitrary constants close to the height of $K$. A $3\times 3$ patch $\mathcal{M}_{K}$, is completed outside the domain centered at $K$. The element opposite to $K$ with respect to the non-corner boundary edge is denoted as $K_{u}$. Extending a layer of rectangles outside $\mathcal{M}_{K}$, a $3\times 3$ patch centered at $K_{u}$ is derived and denoted as $\mathcal{M}_{K_{u}}$.  
Let $\varphi_{K}$ and $\varphi_{K_{u}}$ denote two functions supported on $\mathcal{M}_{K}$ and $\mathcal{M}_{K_{u}}$, respectively. Similar operations are conducted on the non-corner boundary edge in the vertical direction; see Figure~\ref{fig:expansion at a non-corner edge} (right). 
\end{itemize}
\begin{figure}[!htbp]
\centering
\includegraphics[height=0.33\hsize]{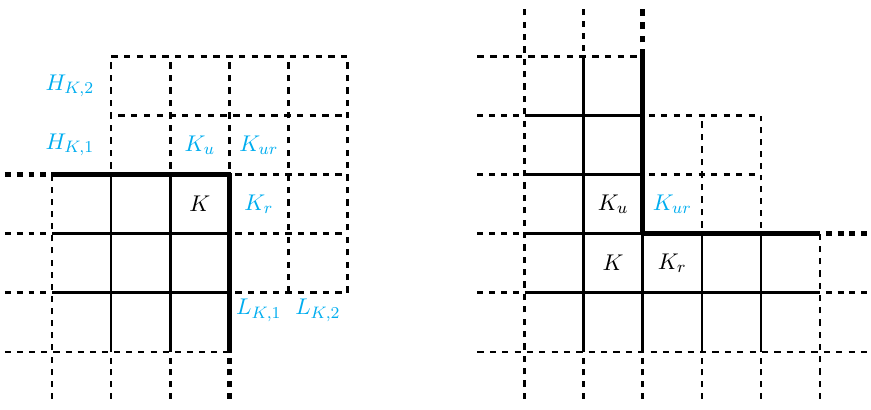}
\caption{Expansion outside $\Omega$ and four basis functions added corresponding to a convex corner node (left) and a concave corner node (right).}\label{fig:expansion at a corner}
\end{figure}
\begin{figure}[!htbp]
\centering
\includegraphics[height=0.33\hsize]{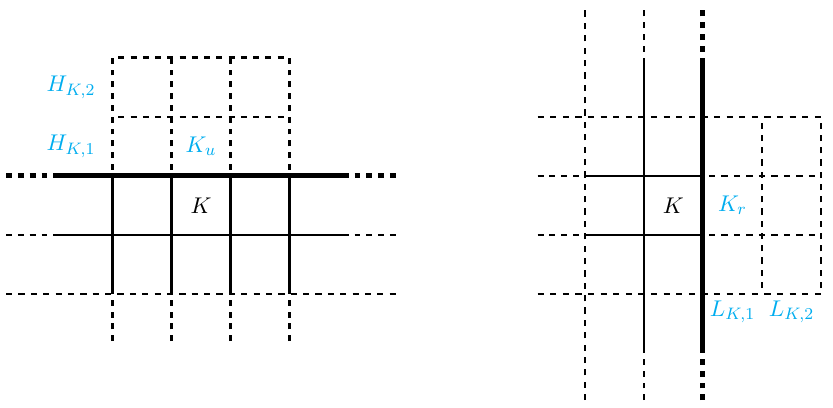}
\caption{Expansion outside a non-corner boundary edge (left or right) and two basis functions added.}\label{fig:expansion at a non-corner edge}
\end{figure}

The above expanding operations are carried out locally, by which each cell in $\mathcal{G}_{h}$ can be located in the supports of nine functions. For each boundary cell $K$, the choice of $L_{K,1}$, $L_{K,2}$, $H_{K,1}$, and $H_{K,2}$ appeared in Figures~\ref{fig:expansion at a corner} and \ref{fig:expansion at a non-corner edge}
can be determined only by the size of $K$, to ensure the regularity~\eqref{eq:regularity}.

Let $\mathcal{K}_{h}^{\rm ex}$ be the set of all newly added cells near corner nodes and non-corner boundary edges, such as $K_{u}$, $K_{r}$, $K_{ur}$ in Figures~\ref{fig:expansion at a corner} and \ref{fig:expansion at a non-corner edge}. Let $\mathcal{B}_{h} := \mathcal{K}_{h}^{b} \cup \mathcal{K}_{h}^{\rm ex}$. Then $\big\{ \mathcal{M}_{K} \big\}_{K \in \mathcal{B}_{h}}$ consists of patches which are not completely within  $\Omega$.  Denote $\mathcal{J}_{h}: = \mathcal{K}_{h}^{i} \cup \mathcal{B}_{h}$. Denote 
\begin{equation}
\widetilde{\Omega}_{h} = \cup_{K\in \mathcal{J}_{h}}\mathcal{M}_{K}\quad\mbox{(see Figure~\ref{figure:expansionmesh} (right).) }
\end{equation}
Then $\widetilde{\Omega}_{h}$ is a virtual expansion of the grid $\mathcal{G}_h$.

\begin{figure}[!htbp]
\centering
\includegraphics[height=0.48\hsize]{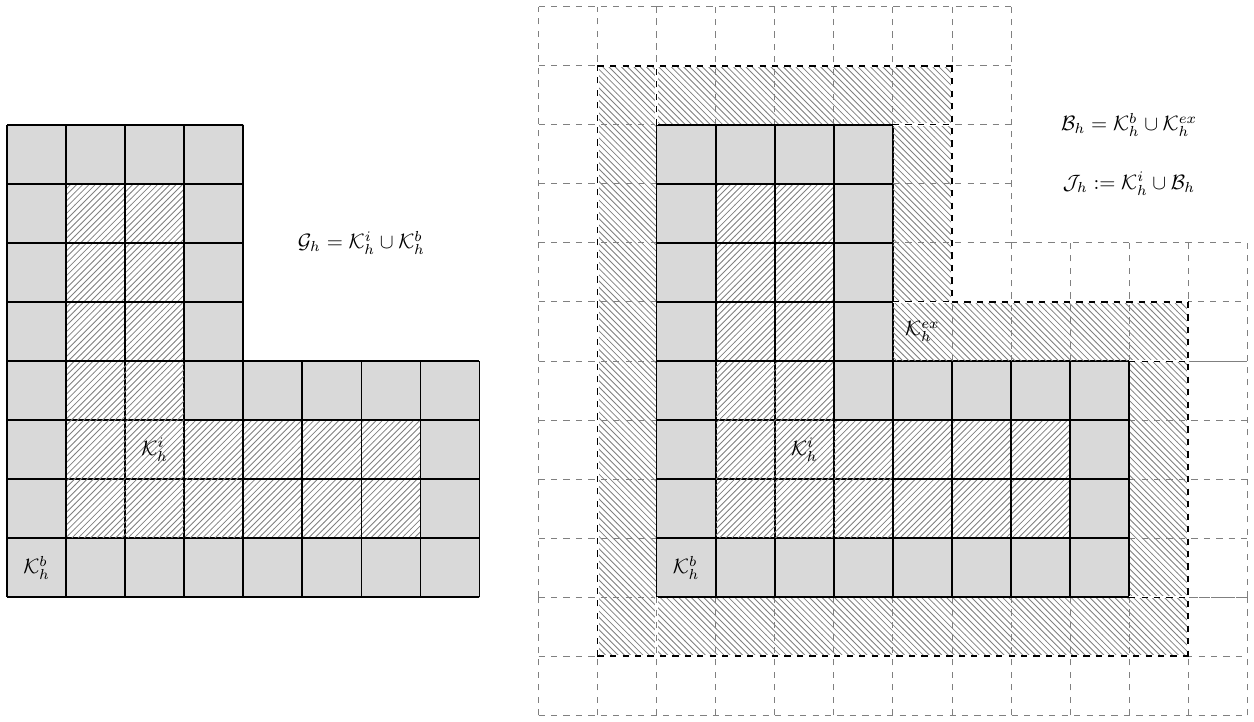}
\caption{Illustration of the extension of a grid. We use different kinds of shadow to show different parts of the grid. Left: an original mesh $\mathcal{G}_h$ on $\Omega$. Right: an extended grid the domain underlies which is denoted by $\widetilde{\Omega}_h$.} \label{figure:expansionmesh}
\end{figure}

\begin{lemma}\label{lem:property of phi}
The set of functions $\big\{\varphi_{K}\big\}_{K \in \mathcal{J}_{h}}$ have the following properties:
\begin{itemize}
\item[(a)] for any $v\in Q_{1}(\widetilde{\Omega}_{h})$ and $(x,y)\in \Omega$, it holds that
$$
\sum_{K\in \mathcal{J}_{h}}v(x_{K},y_{K})\, \varphi_{K}(x,y) = v(x,y) \quad \mbox{and} \quad \sum_{K\in \mathcal{J}_{h}}(\fint_{K} v{\,d x d y }) \,  \varphi_{K} = v(x,y),
$$
where $v(x_{K},y_{K})$ is the value of $v$ at the barycenter of $K$;
\item[(b)] for any $v \in P_{2}(\widetilde{\Omega}_{h})$ and $(x,y)\in \Omega$, it holds that
\begin{align*}
&\sum\limits_{K\in \mathcal{J}_{h}}r_{K}(v) \, \varphi_{K}(x,y) = v(x,y) \ \mbox{ with } \  r_{K}(v) = v(x_{K},y_{K}) - \tfrac{1}{8}\big(L_{K}^{2} \, \tfrac{\partial^{2}v}{\partial x^{2}} + H_{K}^{2}\, \tfrac{\partial^{2}v}{\partial y^{2}} \big),
\\
& \sum\limits_{K\in \mathcal{J}_{h}}t_{K}(v)\, \varphi_{K}(x,y)=v(x,y) \ \mbox{ with } \   t_{K}(v) = \fint_{K}v{\,d x d y}  - \tfrac{1}{6}\big(L_{K}^{2}\, \tfrac{\partial^{2}v}{\partial x^{2}}  + H_{K}^{2}\, \tfrac{\partial^{2}v}{\partial y^{2}} \big);
\end{align*}
\item[(c)]
for $(x,y)\in \Omega$, it holds that 
\begin{align*}
\sum\limits_{K\in \mathcal{J}_{h}} (d_{K} L_{K}H_{K})\, \varphi_{K}(x,y) = 0, \quad \forall (x,y)\in \Omega,
\end{align*}
where $\{d_{K}\}_{K\in\mathcal{J}_{h}}$ is a checkerboard coefficients set such that for any $K\in\mathcal{J}_{h}$, the following two conditions are satisfied:
(i) $d_{K} = \pm 1$, (ii) $d_{K_{l}} = d_{K_{r}} = d_{K_{d}} = d_{K_{u}} = d_{K}$; see Figure~\ref{fig:checkerboard}.
\end{itemize}
\end{lemma}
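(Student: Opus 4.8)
The plan is to reduce the three identities, which are pointwise statements on $\Omega$, to a finite list of purely local identities --- one on each cell $T\in\mathcal{G}_h$ --- and then to dispatch the interior cells by Proposition~\ref{pro:property of phi 5x5} and the boundary cells by combining that proposition with Remark~\ref{rem:adjoint basis property}. \emph{Localization.} Fix $T\in\mathcal{G}_h$, so $T\subset\Omega$. Since $\varphi_K$ is supported on $\mathcal{M}_K$ by Definition~\ref{def:3x3basis}, we have $\varphi_K|_T\not\equiv 0$ only for the nine cells $K$ with $T\subset\mathcal{M}_K$, i.e.\ for $K$ in the $3\times 3$ adjoint patch $A_T$ centred at $T$; and the extension $\widetilde\Omega_h$ was built precisely so that $A_T\subset\mathcal{J}_h$ for every $T\in\mathcal{G}_h$. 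Hence for any coefficient family $\{c_K\}$ one has $\bigl(\sum_{K\in\mathcal{J}_h}c_K\varphi_K\bigr)\big|_T=\sum_{K\in A_T}c_K\varphi_K|_T$, and since the cells cover $\overline\Omega$ it suffices to establish, for every $T\in\mathcal{G}_h$, the restricted identities
\[
\sum_{K\in A_T}r_K(v)\,\varphi_K|_T=v|_T,\qquad \sum_{K\in A_T}t_K(v)\,\varphi_K|_T=v|_T,\qquad \sum_{K\in A_T}d_KL_KH_K\,\varphi_K|_T=0 .
\]
Moreover (a) need not be treated separately: if $v\in Q_1(\widetilde\Omega_h)$ then $\partial_{xx}v=\partial_{yy}v=0$, so $r_K(v)=v(x_K,y_K)$ while $t_K(v)=\fint_K v=v(x_K,y_K)$ (the midpoint rule being exact on $Q_1$), and the two statements of (a) are the specialisations of the $r_K$- and $t_K$-identities of (b).

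\emph{Interior cells.} If the $5\times 5$ patch centred at $T$ lies inside $\Omega$, then every $K\in A_T$ is an interior cell whose own $3\times 3$ patch lies in $\Omega$, and the three displayed identities are exactly Proposition~\ref{pro:property of phi 5x5}(a)--(c), applied with $T$ in the role of its centre cell. These cells are therefore done with no further work.

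\emph{Boundary cells.} For the remaining $T$ --- those whose $5\times 5$ patch meets $\mathbb{R}^2\setminus\Omega$ --- some members of $A_T$ lie in $\mathcal{K}_h^b$ or in the added set $\mathcal{K}_h^{\rm ex}$, and for these the restriction $\varphi_K|_T$ depends, through $\gamma_x^K,\gamma_y^K$, on the arbitrary extension lengths $L_{K,1},L_{K,2},H_{K,1},H_{K,2}$ fixed when building $\widetilde\Omega_h$. The idea is to group the nine terms of $\sum_{K\in A_T}c_K\varphi_K|_T$ into the horizontally or vertically adjacent pairs of Remark~\ref{rem:adjoint basis property}(a)--(d), each pair being a near-boundary cell together with its neighbour towards the interior. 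Because $T$ always lies in the relevant strip ($\omega_L^K\cup\omega^K$, $\omega^K\cup\omega_R^K$, $\sigma_D^K\cup\sigma^K$, or $\sigma^K\cup\sigma_U^K$) of each such pair, Remark~\ref{rem:adjoint basis property} shows the paired contribution on $T$ is independent of the extension lengths occurring in it; iterating over the pairs needed to cover $A_T$, the full combination $\sum_{K\in A_T}c_K\varphi_K|_T$ is seen to be independent of \emph{every} arbitrary length in the extension. One may then evaluate it after assigning those lengths the values of a genuine rectangular continuation of the mesh around $T$ --- legitimate exactly because the answer does not depend on them --- which turns the local configuration into a bona fide $5\times 5$ sub-grid and so reduces the identity to the interior case. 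This is to be carried out for the finitely many boundary patterns of Figures~\ref{fig:expansion at a corner} and \ref{fig:expansion at a non-corner edge} and for each position of $T$ among the cells of the patches involved; for identity (c) one additionally checks that the sign rule of Proposition~\ref{pro:property of phi 5x5}(c) propagates to a globally consistent $\pm 1$ colouring of the extended mesh --- precisely conditions (i)--(ii) of the lemma --- and that the pairings above are compatible with it, which they are by the alignment of Remark~\ref{rem:adjoint basis property}(a)--(d) with the checkerboard.

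The main obstacle is the bookkeeping at a concave (re-entrant) corner: there a single physical cell $T$ at the corner can be covered at once by several added cells ($K_u$, $K_r$, $K_{ur}$) whose $3\times 3$ patches may overlap, so the Remark~\ref{rem:adjoint basis property} pairings must be organised to eliminate all the extension lengths simultaneously while exhausting $A_T$ with no cell used twice, and the $\pm1$ colouring must be checked to remain consistent around the re-entrant corner. A subsidiary point, needed before localization can even be invoked, is to verify that the extension rules of Figures~\ref{fig:expansion at a corner} and \ref{fig:expansion at a non-corner edge} are thick enough that $A_T\subset\mathcal{J}_h$, and that all patches $\mathcal{M}_K$ with $K\in A_T$ are present in $\widetilde\Omega_h$, for every $T\in\mathcal{G}_h$, corner cells included; this is what the extra layer of rectangles added outside each $\mathcal{M}_K$ in the construction is for. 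Everything else is the direct computation already encapsulated in Proposition~\ref{pro:property of phi 5x5} and Remark~\ref{rem:adjoint basis property}.
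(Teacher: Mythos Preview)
Your proposal is correct and follows essentially the same route as the paper's proof: localize to each cell $T\in\mathcal{G}_h$, dispatch interior cells by Proposition~\ref{pro:property of phi 5x5}, and for the outermost two layers use Remark~\ref{rem:adjoint basis property} to show the paired contributions on $T$ are independent of the arbitrary extension lengths, thereby reducing to the interior case. Your treatment is in fact more careful than the paper's --- you explicitly note that (a) is a specialization of (b), flag the bookkeeping at concave corners, and check the global consistency of the checkerboard colouring --- whereas the paper handles the boundary case by one representative example (a right boundary edge) and asserts the rest is similar.
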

%
\begin{figure}[!htbp]
\centering
\includegraphics[height=0.32\hsize]{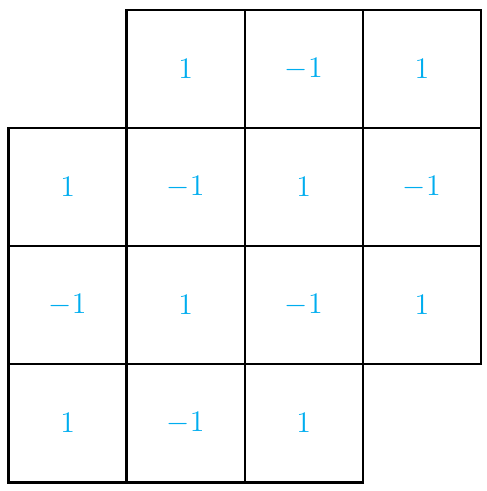}
\caption{Illustration of checkerboard distribution patterns.}\label{fig:checkerboard}
\end{figure}
\begin{proof}
It is equivalent to prove these equalities for each cell in $\mathcal{G}_{h}$. By Proposition~\ref{pro:property of phi 5x5}, for cells whose associated $5\times 5$ patches are within $\Omega$, these properties are already verified . Hence we only have to verify these equalities for the outermost two layers of cells in $\mathcal{G}_{h}$. 

Notice that the expanding operations are carried out locally, and each rectangle in $\mathcal{G}_{h}$ is located in the supports of nine functions  $\big\{\varphi_{K}\big\}_{K\in \mathcal{J}_{h}}$. Take a right boundary as an example; see Figure~\ref{fig:expansion at a non-corner edge} (right). According to Remark~\ref{rem:adjoint basis property}, the choices of $L_{K,1}$ and $L_{K,2}$ do not affect the values of $\big(r_{K}(v)\varphi_{K} + r_{K_{r}}(v)\varphi_{K_{r}}\big)|_{\Omega}$,  $\big(t_{K}(v)\varphi_{K} + t_{K_{r}}(v)\varphi_{K_{r}}\big)|_{\Omega}$, and $\big(d_{K}L_{K}H_{K}\varphi_{K} + d_{K_{r}}L_{K_{r}}H_{K_{r}}\varphi_{K_{r}}\big)|_{\Omega}$. Therefore, although these boundary elements on the same column may be extended outside $\Omega$ with different lengths, properties (a)-(c) stated in Property~\ref{pro:property of phi 5x5} are also true for elements 
located in the right outermost two layers of $\mathcal{G}_{h}$. 

The case of other boundaries can be verified similarly. These facts, together with Proposition~\ref{pro:property of phi 5x5}, complete the proof.
\end{proof}

\subsubsection{An auxiliary interpolator}

For a cell $K$ with $\big\{S_{\mu}^{K}\}_{\mu = 1:5}$ are five cells around $K$ (see Figure~\ref{fig:S3x3basis}), denote 
\begin{equation}
\lambda_{K}(v) = \sum\limits_{\mu = 1}^{5}w_{\mu}^{K}\,(\fint_{S_{\mu}^{K}}v{\,d x d y }),
\end{equation}  where
\begin{align*}
&w_{1}^{K} = \frac{-L_{K}^{2}}{(L_{K,-1}+L_{K})(L_{K,-1}+L_{K}+L_{K,1})}, \quad w_{2}^{K} = \frac{-L_{K}^{2}}{(L_{K}+L_{K,1})(L_{K,-1}+L_{K}+L_{K,1})},\\
&w_{3}^{K} = \frac{-H_{K}^{2}}{(H_{K,-1}+H_{K})(H_{K,-1}+H_{K}+H_{K,1})}, \quad w_{4}^{K} = \frac{-H_{K}^{2}}{(H_{K}+H_{K,1})(H_{K,-1}+H_{K}+H_{K,1})},\\
& w_{5}^{K} = 1 - (w_{1}^{K} + w_{2}^{K}  + w_{3}^{K}  + w_{4}^{K}).
\end{align*}

\begin{figure}[!htbp]
\centering
\includegraphics[height=0.34\hsize]{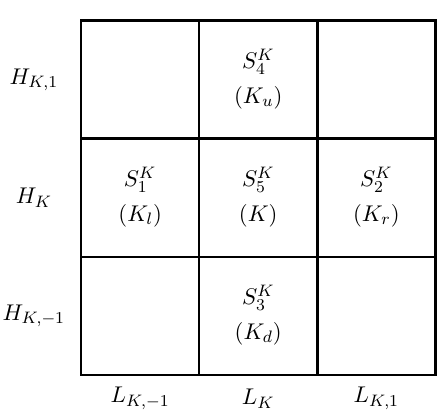}
\caption{Each functional $\lambda_{K}(v)$ utilizes the integral mean values of $v$ on the five elements around $K$.}\label{fig:S3x3basis}
\end{figure}

\begin{definition}\label{def:interpolationVh}
Define
\begin{align*}
\widetilde{\Pi}_{h}: L^{1}(\widetilde{\Omega}_{h}) \rightarrow  {  V_{h0}^{\rm{R}}(\widetilde{\Omega}_h)}, \quad\mbox{such\ that}\quad \widetilde{\Pi}_{h}(v) = \sum\limits_{K\in \mathcal{J}_{h}}\lambda_{K}(v)\,\varphi_{K}(x,y).
\end{align*}
\end{definition}
Denote 
\begin{equation}
\Delta_{T}  := \bigcup_{ \substack{ \mathcal{M}_{K}\supset T \\ K \in \mathcal{J}_{h}} } \mathcal{M}_{K},\ \ \ \mbox{for}\ \ T\in \mathcal{G}_{h}. 
\end{equation}
Evidently, 
$$
\mbox{for\ any}\ T\in \mathcal{G}_h,\  \widetilde{\Pi}_hv_1=\widetilde{\Pi}_hv_2\ \mbox{on}\ T\ \mbox{provided\ that}\ v_1=v_2 \ \mbox{on}\ \Delta_T.
$$
The fundamental properties of $\widetilde{\Pi}_h$, namely the local preservation of quadratic polynomials and the local stability, are surveyed in the Lemma below. 

\begin{lemma}  \label{lemma:ccc}
For any $T\in\mathcal{G}_h$, it holds that 
\begin{enumerate}
\item[(a)] $(\widetilde{\Pi}_{h}v)|_{T} = v|_{T}$ with $v\in L^1(\widetilde{\Omega}_h)$ such that $v|_{\Delta_T}\in P_{2}(\Delta_T)$;  \label{lem:preserveP2}
\item[(b)]$|\widetilde{\Pi}_{h}v|_{k,T} \lesssim h_{T}^{-k}\|v\|_{0,\Delta_{T}} \ \mbox{ with } \ 0 \leqslant k\leqslant 2$;
\label{lem:stability}
\item[(c)]$|v-\widetilde{\Pi}_{h}v|_{k,T}\lesssim h_{T}^{s-k}|v|_{s,\Delta_{T}},\ \mbox{for}\ v\in H^s(\Delta_T)\ \mbox{ with } 0\leq k \leq s \leq 3.$  \label{lem:approximationPi}
\end{enumerate}
\end{lemma}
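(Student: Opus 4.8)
The plan is to prove items (a), (b), (c) in order, using (a) for polynomial reproduction in a Bramble--Hilbert argument for (c) and (b) to make that argument quantitative. Everything is local: for fixed $T\in\mathcal{G}_h$ only the at most nine functions $\varphi_K$ with $T\subset\mathcal{M}_K$ are nonzero on $T$, and each such $\lambda_K$ reads its argument only on the five cells $S_\mu^K\subset\mathcal{M}_K\subset\Delta_T$; hence all estimates for $(\widetilde{\Pi}_h v)|_T$ reduce to estimates on $\Delta_T$, a union of at most nine $3\times 3$ patches whose cells, by the local quasi-uniformity implied by \eqref{eq:regularity}, all have size $\cequiv h_T$, so $\mathrm{diam}(\Delta_T)\cequiv h_T$. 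For (a), I would first verify $\lambda_K(v)=t_K(v)$ for all $v\in P_2$, with $t_K$ the functional from Lemma~\ref{lem:property of phi}(b): expanding $v$ about the barycenter $(x_K,y_K)$ and using $\fint_S v = v(\text{barycenter of }S)+\tfrac{\ell_S^2}{24}\partial_{xx}v+\tfrac{h_S^2}{24}\partial_{yy}v$ for a rectangle $S$ with sides $\ell_S,h_S$, the weights $w_\mu^K$ are designed so that $\sum_\mu w_\mu^K=1$ reproduces constants, the first-order part cancels (since $w_1^K(L_{K,-1}+L_K)=w_2^K(L_K+L_{K,1})$ and symmetrically in $y$), and the second-order part collapses to $-\tfrac18(L_K^2\partial_{xx}v+H_K^2\partial_{yy}v)$; thus $\lambda_K(v)=t_K(v)$. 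If $v|_{\Delta_T}\in P_2(\Delta_T)$, then $\lambda_K(v)=t_K(v)$ for each $K$ with $T\subset\mathcal{M}_K$, and since the identity of Lemma~\ref{lem:property of phi}(b), read on $T$, involves only those $K$ and only $v$ on $\mathcal{M}_K\subset\Delta_T$, it yields $(\widetilde{\Pi}_h v)|_T=\sum_{K\colon T\subset\mathcal{M}_K}\lambda_K(v)\varphi_K|_T=v|_T$.

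For (b), I bound $|\widetilde{\Pi}_h v|_{k,T}\le\sum_{K\colon T\subset\mathcal{M}_K}|\lambda_K(v)|\,|\varphi_K|_{k,T}$. Proposition~\ref{prop:scaling} gives $|\varphi_K|_{k,T}\le C_{\gamma_0}h_T^{1-k}$; Cauchy--Schwarz gives $\big|\fint_{S_\mu^K}v\big|\le|S_\mu^K|^{-1/2}\|v\|_{0,S_\mu^K}\lesssim h_T^{-1}\|v\|_{0,\Delta_T}$; and the $w_\mu^K$ are bounded in terms of $\gamma_0$ only (local quasi-uniformity keeps $L_K/L_{K,\pm1}$, $H_K/H_{K,\pm1}$ bounded), so $|\lambda_K(v)|\lesssim h_T^{-1}\|v\|_{0,\Delta_T}$. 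Summing the at most nine contributions, $|\widetilde{\Pi}_h v|_{k,T}\lesssim h_T^{1-k}h_T^{-1}\|v\|_{0,\Delta_T}=h_T^{-k}\|v\|_{0,\Delta_T}$.

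For (c), given $v\in H^s(\Delta_T)$ with $0\le k\le s\le 3$, pick $p\in P_2(\Delta_T)$ (e.g.\ an averaged Taylor polynomial of $v$ of degree $\lceil s\rceil-1\le 2$ on $\Delta_T$) with $|v-p|_{j,\Delta_T}\lesssim h_T^{\,s-j}|v|_{s,\Delta_T}$ for $0\le j\le s$. By linearity of $\widetilde{\Pi}_h$ and $\widetilde{\Pi}_h p = p$ on $T$ from (a), on $T$ we have $v-\widetilde{\Pi}_h v=(v-p)-\widetilde{\Pi}_h(v-p)$, so by (b),
\[
|v-\widetilde{\Pi}_h v|_{k,T}\le|v-p|_{k,T}+|\widetilde{\Pi}_h(v-p)|_{k,T}\lesssim h_T^{\,s-k}|v|_{s,\Delta_T}+h_T^{-k}\|v-p\|_{0,\Delta_T}\lesssim h_T^{\,s-k}|v|_{s,\Delta_T}.
\]

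The main obstacle is the uniformity of the polynomial-approximation (Deny--Lions/Bramble--Hilbert) and Poincar\'e constants on $\Delta_T$ used in (c): $\Delta_T$ is not a rectangle and, near re-entrant corners of $\Omega$ or along the extension layers of Section~\ref{sec:aied}, need not be star-shaped. One must therefore show that the rescaled domains $\{h_T^{-1}\Delta_T\}_{T\in\mathcal{G}_h}$ form a family of connected finite unions of uniformly-shaped rectangles --- hence John domains with parameters depending only on $\gamma_0$ --- on which those constants are uniform; this is where \eqref{eq:regularity} and the controlled extension of Section~\ref{sec:aied} are essential. With that in hand, the scaling bookkeeping in (b)--(c) is routine.
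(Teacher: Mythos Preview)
Your proof is correct and follows essentially the same route as the paper's: verify $\lambda_K(v)=t_K(v)$ on $P_2$ and invoke Lemma~\ref{lem:property of phi}(b) for (a); combine Proposition~\ref{prop:scaling} with Cauchy--Schwarz on the cell averages for (b); and run a Bramble--Hilbert argument via (a) and (b) for (c). The only substantive difference is in how the uniform Bramble--Hilbert constant on $\Delta_T$ is justified: the paper simply notes that $\Delta_T$ is a finite union of rectangles, each star-shaped with ball ratio controlled by~\eqref{eq:regularity}, and invokes the Dupont--Scott/Scott--Zhang version of the lemma valid on such unions, which is lighter than the John-domain argument you outline.
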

We postpone the proof of Lemma \ref{lemma:ccc} to Section \ref{sec:pflm5}. With this lemma, we establish an available interpolation operator that is stable and reproduces quadratic polynomial.  Its construction is similar to the quasi-interpolation operators proposed in the spline theory \cite{Wang;Lu1998,Sablonniere.P2003,2Sablonniere.P2003}. As a matter of fact, an interpolation which does not necessarily preserve the entire finite element space but preserves quadratic polynomials locally admits the approximation property.

\subsection{An optimal interpolator to reduced rectangular Morley element space}
\label{sec:rrmitp}

\begin{definition}\label{def:interpolationVh0}
Define
\begin{align*}
\Pi_{h0}: L^{1}(\Omega) \rightarrow V_{h0}^{\rm{R}},\quad \Pi_{h0}v = \sum\limits_{K\in \mathcal{K}_{h}^{i}}\lambda_{K}(v)\,\varphi_{K}(x,y).
\end{align*}
\end{definition}
\begin{remark}
Since every functional $\lambda_{K}(v)$ in Definitions~ \ref{def:interpolationVh} and \ref{def:interpolationVh0} only involves the information of $v$ within $\mathcal{M}_{K}$, operators $\widetilde{\Pi}_{h}$ and $\Pi_{h0}$ define local approximation schemes. Notice that $\widetilde{\Pi}_{h}(v)$ involves information of $v$ outside $\Omega$, and the difference between $(\widetilde{\Pi}_{h}v)|_{\Omega}$ and $(\Pi_{h0}v)|_{\Omega}$ only lies in some cells near $\partial\Omega$. 
\end{remark}

\begin{remark}
We note that the operator $\Pi_{h0}$ is not a projection, i.e., it does not preserve every function in  $V_{h0}^{\rm R}$. Actually, with the given basis functions, no locally-defined interpolation can be projective; see \cite{ZZZ-AML} for details.
\end{remark}

\begin{lemma}
If $T\in \mathcal{G}_{h}$ and $\#\big\{\mathcal{M}_{K}:\ \mathcal{M}_{K}\cap \mathring{T} \ne \varnothing, K\in \mathcal{K}_{h}^{i}\big\} = 9$, then $(\Pi_{h0}v)|_{T} = v|_{T}$ with $v\in L^1(\Omega)$ such that $v|_{\Delta_T}\in P_2(\Delta_T)$.
\end{lemma}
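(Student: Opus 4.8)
The plan is to reduce this statement about the operator $\Pi_{h0}$, which only sums over interior cells $K\in\mathcal{K}_h^i$, to the statement already proved in Lemma~\ref{lemma:ccc}(a) for the auxiliary operator $\widetilde{\Pi}_h$, which sums over the larger index set $\mathcal{J}_h=\mathcal{K}_h^i\cup\mathcal{B}_h$. The key observation is the hypothesis $\#\{\mathcal{M}_K:\mathcal{M}_K\cap\mathring{T}\ne\varnothing,\ K\in\mathcal{K}_h^i\}=9$: since every cell $T$ is covered by exactly nine patches $\mathcal{M}_K$ with $K\in\mathcal{J}_h$ (as noted after the grid-extension construction), this condition says that all nine of those covering patches are in fact centered at \emph{interior} cells, i.e., none of them comes from $\mathcal{B}_h=\mathcal{K}_h^b\cup\mathcal{K}_h^{\rm ex}$. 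Consequently, on such a cell $T$ the two sums $\sum_{K\in\mathcal{J}_h}\lambda_K(v)\varphi_K$ and $\sum_{K\in\mathcal{K}_h^i}\lambda_K(v)\varphi_K$ agree, because all the omitted terms (those with $K\in\mathcal{B}_h$) have $\varphi_K$ vanishing on $T$.

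First I would make precise that $\varphi_K$ is supported on $\mathcal{M}_K$ (Definition~\ref{def:3x3basis}(a)), so $\varphi_K|_T\equiv 0$ unless $\mathcal{M}_K\supset T$, equivalently $\mathcal{M}_K\cap\mathring T\ne\varnothing$ for a cell $T$ of the grid. Next I would invoke the fact that the covering multiplicity of $T$ by $\{\mathcal{M}_K\}_{K\in\mathcal{J}_h}$ is exactly nine; combined with the hypothesis that already nine of these patches have centers in $\mathcal{K}_h^i\subset\mathcal{J}_h$, it follows that $\{K\in\mathcal{J}_h:\mathcal{M}_K\supset T\}=\{K\in\mathcal{K}_h^i:\mathcal{M}_K\supset T\}$. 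Hence, for any $v$,
\[
(\Pi_{h0}v)|_T=\sum_{K\in\mathcal{K}_h^i}\lambda_K(v)\,\varphi_K|_T=\sum_{K\in\mathcal{J}_h}\lambda_K(v)\,\varphi_K|_T=(\widetilde{\Pi}_h v)|_T .
\]
Then I would observe that the set $\Delta_T=\bigcup_{\mathcal{M}_K\supset T,\,K\in\mathcal{J}_h}\mathcal{M}_K$ is, under this hypothesis, contained in $\Omega$ (it is the union of the nine interior-centered patches, each of which lies in $\mathcal{G}_h$), so $v|_{\Delta_T}\in P_2(\Delta_T)$ is a meaningful hypothesis and $\widetilde{\Pi}_h v$ is well-defined on $T$ using only data of $v$ inside $\Omega$. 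Applying Lemma~\ref{lemma:ccc}(a) to $\widetilde{\Pi}_h$ gives $(\widetilde{\Pi}_h v)|_T=v|_T$, and the chain of equalities finishes the proof.

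The only mild obstacle is bookkeeping: one must be careful that "nine patches cover $T$" is genuinely the exact covering number for cells of $\mathcal{G}_h$ under the extended grid $\widetilde{\Omega}_h$ — this is exactly what the local extension construction guarantees ("each cell in $\mathcal{G}_h$ can be located in the supports of nine functions"), so there is no hidden edge case near $\partial\Omega$. Everything else is a direct matching of index sets and an appeal to Lemma~\ref{lemma:ccc}(a); no new estimates are required.
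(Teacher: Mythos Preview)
Your proof is correct and follows essentially the same approach as the paper: the paper's one-line argument observes that the hypothesis forces $V_{h0}^{\rm R}|_T = V_h^{\rm R}|_T$ (equivalently, that all nine covering basis functions are interior-centered) and then defers to the proof of Lemma~\ref{lemma:ccc}(a), which is exactly your reduction $(\Pi_{h0}v)|_T=(\widetilde{\Pi}_h v)|_T$ followed by the $P_2$-preservation of $\widetilde{\Pi}_h$. Your write-up is simply more explicit about the index-set matching and the fact that $\Delta_T\subset\Omega$ so that only data of $v$ inside $\Omega$ is used.
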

\begin{proof}
The condition of $\#\big\{\mathcal{M}_{K}:\ \mathcal{M}_{K}\cap \mathring{T} \ne \varnothing, K\in \mathcal{K}_{h}^{i}\big\} = 9$ ensures that  $V_{h0}^{\rm R}\big|_{T} = V_{h}^{\rm R}\big|_{T}$, and the result is direct obtained from the  proof in Lemma \ref{lem:preserveP2}(b).
\end{proof}

A main result of the present paper is the theorem below. Note that herein $\Omega$ is not necessarily a convex domain.
\begin{theorem}\label{thm:approxH02}
It holds for $v\in H^{2}_{0}(\Omega) \cap H^{s}(\Omega)$ that 
\begin{equation}
|v-\Pi_{h0}v|_{k,h}\lesssim h^{s-k}|v|_{s,\Omega}, \quad 0\leq k \leq s, \quad 2 \leq s \leq 3.
\end{equation} 
\end{theorem}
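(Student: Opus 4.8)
The plan is to bootstrap from the local estimates in Lemma~\ref{lemma:ccc} and the companion operator $\widetilde{\Pi}_h$, transferring them from $\widetilde{\Pi}_h$ on the extended grid $\widetilde{\Omega}_h$ to $\Pi_{h0}$ on $\Omega$. The key point is that, by the Remark following Definition~\ref{def:interpolationVh0}, the two operators agree on all cells $T$ whose neighbourhood $\Delta_T$ lies in the interior, so the only cells requiring separate attention are those in the outermost few layers near $\partial\Omega$; since $v\in H^2_0(\Omega)$ we have extra boundary information there, which is exactly what compensates for $\Pi_{h0}$ dropping the extension cells $\varphi_K$, $K\in\mathcal B_h$.

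First I would fix $T\in\mathcal G_h$ and split into two cases. In the interior case, where $\#\{\mathcal M_K:\mathcal M_K\cap\mathring T\ne\varnothing,\ K\in\mathcal K_h^i\}=9$, the preceding lemma gives $(\Pi_{h0}v)|_T=(\widetilde\Pi_h v)|_T$, so Lemma~\ref{lemma:ccc}(c) directly yields $|v-\Pi_{h0}v|_{k,T}\lesssim h_T^{s-k}|v|_{s,\Delta_T}$. In the boundary case I would write $v-\Pi_{h0}v=(v-\widetilde\Pi_h v)+(\widetilde\Pi_h v-\Pi_{h0}v)$ on $T$; the first term is controlled by Lemma~\ref{lemma:ccc}(c) again, and the second term is $\sum_{K\in\mathcal B_h,\ \mathcal M_K\supset T}\lambda_K(v)\varphi_K$, i.e.\ the contribution of the ``missing'' extension/boundary basis functions. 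Here I would use that, because $v\in H^2_0(\Omega)$, $v$ and $\nabla v$ vanish on $\partial\Omega$, so that the averaged functionals $\lambda_K(v)$ over cells abutting the boundary are themselves small: a scaled trace/Poincar\'e argument on the relevant patch gives $|\lambda_K(v)|\lesssim h_T^{2-1/2}\cdot(\text{something})$, or more cleanly $|\lambda_K(v)|\lesssim h_T^{s-1}\|v\|_{s,\Delta_T\cap\Omega}$ with the loss of one power absorbed by the homogeneous boundary condition, and then Proposition~\ref{prop:scaling} bounds $|\varphi_K|_{k,T}\lesssim h_T^{1-k}$. Combining, the boundary correction is also $O(h_T^{s-k}|v|_{s,\Delta_T})$.

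Having the local estimate $|v-\Pi_{h0}v|_{k,T}\lesssim h_T^{s-k}|v|_{s,\Delta_T}$ for every $T$, I would square, sum over $T\in\mathcal G_h$, use $h_T\le h$ together with $s-k\ge0$, and invoke the finite-overlap property of the patches $\{\Delta_T\}$ (each point of $\Omega$ lies in a bounded number of them, thanks to shape-regularity~\eqref{eq:regularity}) to conclude $\sum_T|v|_{s,\Delta_T}^2\lesssim|v|_{s,\Omega}^2$, which gives the stated global bound.

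The main obstacle is the boundary case: making rigorous the claim that the dropped terms $\sum_{K\in\mathcal B_h}\lambda_K(v)\varphi_K$ are of optimal order on the near-boundary cells. This requires carefully using the $H^2_0$ boundary conditions — one cannot merely estimate $\lambda_K(v)$ by $\|v\|_{0,\mathcal M_K}$, since that would lose the needed power of $h$; instead one must exploit that $v$ (and $\partial_{\mathbf n}v$) vanish on $\partial\Omega$, presumably via a Bramble--Hilbert / trace inequality on reference patches combined with the explicit weights $w_\mu^K$ defining $\lambda_K$. I would also need to check that the extended grid $\widetilde\Omega_h$ can be chosen, as asserted after Figure~\ref{figure:expansionmesh}, so that $\{\Delta_T\}$ retains the finite-overlap and shape-regularity properties uniformly in $h$; this is essentially bookkeeping but is where the convexity-free nature of the argument is actually used, since concave corners force the more delicate extension patterns of Figure~\ref{fig:expansion at a corner} (right).
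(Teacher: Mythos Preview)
Your overall architecture matches the paper's: split $v-\Pi_{h0}v=(v-\widetilde\Pi_h v)+(\widetilde\Pi_h v-\Pi_{h0}v)$, handle the first piece by Lemma~\ref{lemma:ccc}(c), and argue that the second piece---the dropped boundary basis functions---is of optimal order thanks to the $H^2_0$ data. Two concrete points, however, separate your sketch from a proof.

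\medskip
\noindent\textbf{(1) The auxiliary operator needs an extension.} You write $\widetilde\Pi_h v$, but $\widetilde\Pi_h$ is defined on $L^1(\widetilde\Omega_h)$ and the functionals $\lambda_K$ for $K\in\mathcal B_h$ involve averages over cells lying outside $\Omega$; $v$ is only given on $\Omega$. The paper fixes this with the Stein extension $E$ of Lemma~\ref{lem:extension}, working with $\widetilde\Pi_h(Ev)$ and using $\|Ev\|_{s,\widetilde\Omega_h}\lesssim\|v\|_{s,\Omega}$. Extending by zero, which $H^2_0$ would permit, is not enough: for $s=3$ the zero extension is in general not in $H^3$ across $\partial\Omega$, so Lemma~\ref{lemma:ccc}(c) with $s=3$ would fail on boundary patches.

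\medskip
\noindent\textbf{(2) The boundary correction needs a sharper mechanism than a direct trace bound.} You propose to show $|\lambda_K(v)|\lesssim h_T^{s-1}\|v\|_{s,\Delta_T\cap\Omega}$ by ``a scaled trace/Poincar\'e argument''. This is the right target but not the right route: $\lambda_K(Ev)$ is an $O(1)$ weighted average and does not by itself see the boundary condition. The paper's device is to insert a polynomial $p\in P_2(\widetilde\Omega_h)$ and split $\lambda_K(Ev)=\lambda_K(Ev-p)+\lambda_K(p)$. The first term is small by the $L^2$ stability you already have. For the second, one uses that $\lambda_K$ agrees with $t_K$ on $P_2$ (this is exactly how the weights $w_\mu^K$ were chosen), and $t_K(p)$ admits a Taylor expansion about a boundary vertex $a_K\in\partial\Omega$:
\[
\lambda_K(p)=p(a_K)\pm\tfrac{L_K}{2}\partial_x p(a_K)\pm\tfrac{H_K}{2}\partial_y p(a_K)\pm\tfrac{L_KH_K}{4}\partial_{xy}p.
\]
Now the $H^2_0$ condition enters: since $v$, $\partial_x v$, $\partial_y v$, and $\partial_{xy}v$ all vanish on the boundary edge $e_K\ni a_K$, each point value above equals the corresponding value of $p-v$, and Lemmas~\ref{lem:NormEquivalence}--\ref{lem:traceThm} plus Bramble--Hilbert convert these into $h_T^{s-k}|Ev|_{s,\Delta_T}$. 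Without this polynomial-splitting step the $H^2_0$ data never touches $\lambda_K$, and your estimate loses one power of $h$.

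\medskip
Once these two ingredients are in place, your summation-and-finite-overlap argument for the global bound is exactly right.
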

We postpone the proof after some technical preparations.

These two lemmas are elementary but useful for verifying the approximation property of $\Pi_{h0}$; see, for example, \cite[Lemma~2]{Clement1975} and \cite[p24--p26]{Fichera1965}.
\begin{lemma}\label{lem:NormEquivalence} 
Let $e$ be an edge and $p\in P_{l}(e)$ with $l\geqslant 0$. Then 
$
|p|_{0,\infty, e}^{2} \lesssim |e|^{-1}|p|_{0,e}^{2}.
$
\end{lemma}
\begin{lemma}\label{lem:traceThm}
Let $K\in \mathcal{G}_{h}$, $e$ be an edge of $K$, and $v\in H^{1}(K)$. Then
$
|v|_{0,e}^{2}\lesssim h_{K}^{-1}|v|_{0,K}^{2} + h_{K}|v|_{1,K}^{2}.
$
\end{lemma}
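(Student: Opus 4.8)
\textbf{Proof proposal for Lemma~\ref{lem:traceThm}.}

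The plan is to establish the trace inequality by the standard scaling argument: prove the estimate first on a fixed reference cell, where it is a consequence of the continuity of the trace operator $H^1(\hat K)\to L^2(\hat e)$, and then transfer it to the physical cell $K$ by an affine change of variables, tracking how the $L^2$ norms on $K$ and on $e$ together with the $H^1$ seminorm scale with $h_K$. First I would fix the reference cell $\hat K=[0,1]^2$ with a chosen reference edge $\hat e$. On $\hat K$ the trace operator is bounded, so there is a constant $C$ with $|\hat v|_{0,\hat e}^2\lesssim \|\hat v\|_{1,\hat K}^2 = |\hat v|_{0,\hat K}^2 + |\hat v|_{1,\hat K}^2$ for all $\hat v\in H^1(\hat K)$; this is exactly the form that, after scaling, will produce the two terms on the right-hand side with the correct powers of $h_K$.

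Next I would introduce the affine map $F_K(\hat x,\hat y)=(x_K,y_K)+(h_{x,K}\hat x,\,h_{y,K}\hat y)$ taking $\hat K$ onto $K$ and $\hat e$ onto the chosen edge $e$, and set $\hat v:=v\circ F_K$. The core of the argument is then to record the three scaling relations: $|v|_{0,K}^2 \cequiv h_{x,K}h_{y,K}\,|\hat v|_{0,\hat K}^2$, the analogous edge relation $|v|_{0,e}^2 \cequiv |e|\,|\hat v|_{0,\hat e}^2$ with $|e|$ equal to $h_{x,K}$ or $h_{y,K}$ according to the orientation of $e$, and the seminorm relation coming from the chain rule, which contributes factors of the form $h_{x,K}^{-2}$ or $h_{y,K}^{-2}$ against the Jacobian $h_{x,K}h_{y,K}$. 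Substituting the reference estimate for $\hat v$ and clearing these factors reduces everything to bounding $|e|\,h_{x,K}^{-1}h_{y,K}^{-1}$ times $|v|_{0,K}^2$ and a matching quantity times $|v|_{1,K}^2$, at which point I would invoke the shape-regularity assumption \eqref{eq:regularity} to replace each of $h_{x,K}$, $h_{y,K}$, $|e|$ by a quantity comparable to $h_K$ up to the regularity constant $\gamma_0$, so that the first term collapses to $h_K^{-1}|v|_{0,K}^2$ and the second to $h_K\,|v|_{1,K}^2$.

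The main obstacle, modest as it is, lies in the seminorm term: the affine pullback mixes the two coordinate directions through the chain rule, so that $|\hat v|_{1,\hat K}^2$ is comparable to $|v|_{1,K}^2$ only after one bounds the direction-dependent factors $h_{x,K}^2$ and $h_{y,K}^2$ uniformly by $h_K^2$, and this is precisely where local quasi-uniformity from \eqref{eq:regularity} must be used carefully rather than taken for granted. Because $K$ is a rectangle with sides parallel to the axes, the change of variables is diagonal and the two directions decouple cleanly, so no genuine difficulty arises; the only care needed is to confirm that the anisotropy ratio $h_{x,K}/h_{y,K}$ is controlled by $\gamma_0$, which \eqref{eq:regularity} guarantees. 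I would therefore present the proof compactly, stating the reference trace bound, writing the three scaling identities, and closing with a single appeal to shape-regularity, rather than grinding through the elementary constants.
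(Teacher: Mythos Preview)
Your scaling argument is correct and is exactly the standard route to this local trace inequality. Note, however, that the paper does not actually supply a proof of Lemma~\ref{lem:traceThm}: it simply records the estimate as elementary and points to \cite[Lemma~2]{Clement1975} and \cite[p.~24--26]{Fichera1965}. So there is no ``paper's own proof'' to compare against; your proposal fills in precisely the kind of argument those references contain, and nothing further is needed.
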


\begin{lemma}{\rm(\!\cite[Theorem 1.4.5]{Brenner;Scott2007})}\label{lem:extension}
Suppose that $\Omega$ has a Lipschitz boundary. Then there is an extension mapping $E:~W^{p}_{k}(\Omega) \mapsto W^{p}_{k}(\mathbb{R}^{2})$ defined for all non-negative integers $k$ and real numbers $p$ in the range $1\leq p\leq \infty$ satisfying
 \begin{align}\label{eq:stabel extence}
 Ev\big|_{\Omega} = v, \quad 
 \|Ev\|_{W^{p}_{k}(\mathbb{R}^{2})} \leq C \|v\|_{W^{p}_{k}(\Omega)}, \quad \forall v \in W^{p}_{k}(\Omega),
 \end{align}
 where $C$ is a generic constant independent of $v$.
 \end{lemma}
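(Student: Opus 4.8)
The plan is to establish the theorem by Stein's reflection construction, reducing the global statement to a model problem on a region lying above a Lipschitz graph and then patching the local extensions together with a partition of unity. Since $\partial\Omega$ is compact (the polygon being bounded), the Lipschitz hypothesis lets me cover $\overline{\Omega}$ by finitely many open sets $U_0,U_1,\dots,U_N$, where $\overline{U_0}\subset\Omega$ and, for each $j\geq 1$, after a rigid rotation of coordinates the set $\Omega\cap U_j$ coincides with $\{(x_1,x_2):x_2>\phi_j(x_1)\}\cap U_j$ for some Lipschitz function $\phi_j$. Choosing a smooth partition of unity $\{\psi_j\}_{j=0}^N$ subordinate to this cover, I would write $v=\sum_j \psi_j v$ and extend each piece separately; the interior piece $\psi_0 v$ extends by zero, so the whole difficulty is concentrated in the boundary pieces.

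For a single boundary piece I would treat the model \emph{special Lipschitz domain} $\Omega_\phi:=\{x_2>\phi(x_1)\}$. The heart of the argument is a \emph{regularized distance} $\delta^*(x)$, a function defined on the open complement of $\overline{\Omega_\phi}$ that is smooth there, comparable to $\mathrm{dist}(x,\partial\Omega_\phi)$ up to constants depending only on the Lipschitz constant of $\phi$, and whose derivatives satisfy $|\partial^\alpha\delta^*|\leq C_\alpha\,(\delta^*)^{1-|\alpha|}$. Such a $\delta^*$ is produced from a Whitney decomposition of the complement into dyadic cubes together with a smooth summation of their side-lengths. Using it, and normalizing so that the reflected point $x_2+\lambda\delta^*$ lies in $\Omega_\phi$ for all $\lambda\geq 1$, I would define Stein's operator
\begin{equation*}
(Ev)(x_1,x_2)=\int_1^\infty v\big(x_1,\,x_2+\lambda\,\delta^*(x_1,x_2)\big)\,\psi(\lambda)\,d\lambda, \qquad x_2<\phi(x_1),
\end{equation*}
with $Ev=v$ on $\Omega_\phi$, where the weight $\psi$ is continuous on $[1,\infty)$, decays faster than any polynomial, and satisfies the moment conditions $\int_1^\infty\psi(\lambda)\,d\lambda=1$ and $\int_1^\infty\lambda^k\psi(\lambda)\,d\lambda=0$ for every $k\geq 1$. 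The existence of such a $\psi$ is classical, and it is precisely this single choice of weight that makes the \emph{same} operator $E$ bounded on $W^p_k$ for every $k$ simultaneously.

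Next I would verify the estimate $\|Ev\|_{W^p_k(\mathbb{R}^2)}\leq C\|v\|_{W^p_k(\Omega_\phi)}$. Differentiating $Ev$ under the integral sign produces, by the chain rule, terms in which derivatives fall on $v$ and on powers of $\delta^*$; the bounds $|\partial^\alpha\delta^*|\leq C_\alpha(\delta^*)^{1-|\alpha|}$ together with $\delta^*\approx|x_2-\phi(x_1)|$ control the geometric factors, while the rapid decay and vanishing moments of $\psi$ absorb the singular powers of $\lambda$ that arise and guarantee that the traces of $Ev$ and its derivatives match those of $v$ from inside, so that no singular distributions are created across $\partial\Omega_\phi$ and $Ev\in W^p_k(\mathbb{R}^2)$. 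A change of variables $t=\lambda\delta^*$ followed by Minkowski's integral inequality (or Young's inequality in $\lambda$) yields the $L^p$ bound for each derivative with a constant depending only on $k$, $p$, and the Lipschitz constant. Finally I would undo the coordinate rotations, multiply each local extension $E_j(\psi_j v)$ by a cutoff $\chi_j$ supported in $U_j$, and sum: $Ev:=\psi_0 v+\sum_{j\geq 1}\chi_j E_j(\psi_j v)$ is the desired global operator, and the finite-cover bound assembles the local estimates into \eqref{eq:stabel extence}.

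The hard part will be the derivative estimates of the third paragraph: controlling $\partial^\alpha(Ev)$ uniformly requires careful combinatorial bookkeeping of how repeated differentiation distributes among $v\big(x_1,x_2+\lambda\delta^*\big)$ and $\delta^*$, and the cancellation supplied by the vanishing moments of $\psi$ is essential to prevent the powers of $\lambda$ generated at each differentiation from destroying integrability at $\lambda=\infty$. The construction of a single weight $\psi$ meeting all moment conditions at once, and the verification that the regularized distance has the stated derivative growth, are the two technical pillars on which that estimate rests.
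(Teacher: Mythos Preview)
Your sketch is a faithful outline of Stein's extension operator, and the ingredients you list---the regularized distance built from a Whitney decomposition, the single weight $\psi$ with all moments vanishing beyond the zeroth, the chain-rule bookkeeping, and the partition-of-unity globalization---are exactly what the full proof requires. The argument is correct in spirit and, if carried out carefully, would establish \eqref{eq:stabel extence}.

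That said, the paper does not prove this lemma at all: it is stated as a quotation of \cite[Theorem~1.4.5]{Brenner;Scott2007} and used as a black box in the proof of Theorem~\ref{thm:approxH02}. So there is no ``paper's own proof'' to compare against; you have supplied a proof where the authors simply cite one. If your aim is to match the paper, a one-line reference to Stein or to Brenner--Scott suffices.
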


\paragraph{\bf Proof of Theorem \ref{thm:approxH02}}
Let $E$ be an extension operator satisfying~\eqref{eq:stabel extence}. It holds by Lemma~\ref{lem:approximationPi}(c) and Lemma~\ref{lem:extension} for $v\in H^{s}(\Omega)$ that 
\begin{equation}\label{eq:approxH02}
|v-~\widetilde{\Pi}_{h}Ev|_{k,h} = |Ev-~\widetilde{\Pi}_{h}Ev|_{k,h} \lesssim h^{s-k}|Ev|_{s,\widetilde{\Omega}_{h}} \lesssim h^{s-k}|v|_{s,\Omega},\ \ \ 0\leq k \leq s \leq 3.
\end{equation}
Since $v-\Pi_{h0}v = (v - \widetilde{\Pi}_{h}Ev)+ (\widetilde{\Pi}_{h}Ev - \Pi_{h0}v)$, we only have to analyze $\widetilde{\Pi}_{h}Ev - \Pi_{h0}v$ cell by cell.
If $T\in \mathcal{G}_{h}$ and $\#\big\{\mathcal{M}_{K}: \ \mathcal{M}_{K}\cap \mathring{T} \ne \varnothing, K\in \mathcal{K}_{h}^{i}\big\} = 9$, then $(\widetilde{\Pi}_{h}Ev - \Pi_{h0}v)|_{T} = 0$, otherwise we have 
\begin{align}\label{eq:diff PiEv and Pi0v}
(\widetilde{\Pi}_{h}Ev - \Pi_{h0}v)|_{T} = \sum\limits_{\substack{\mathcal{M}_{K}\supset T \\ K \in \mathcal{J}_{h} \backslash \mathcal{K}_{h}^{i}}} \lambda_{K}(Ev) \varphi_{K}|_{T}.
\end{align}
First, we consider the case that $v\in H^{2}_{0}(\Omega)\cap H^{3}(\Omega)$. For any $p\in P_{2}(\widetilde{\Omega}_{h})$, by \eqref{eq:normEstimate} and the proof procedure in Lemma \ref{lem:stability}(b), we obtain
\begin{equation}\label{eq:lam(EV) phi}
\begin{split}
\big|\sum\limits_{\substack{\mathcal{M}_{K}\supset T \\ K \in \mathcal{J}_{h} \backslash \mathcal{K}_{h}^{i}}} \lambda_{K}(Ev) \varphi_{K} \big|_{k,T}^{2} & = \big|\sum\limits_{\substack{\mathcal{M}_{K}\supset T \\ K \in \mathcal{J}_{h} \backslash \mathcal{K}_{h}^{i}}} \lambda_{K}(Ev-p) \varphi_{K} + \sum\limits_{\substack{\mathcal{M}_{K}\supset T \\ K \in \mathcal{J}_{h} \backslash \mathcal{K}_{h}^{i}}}\lambda_{K}(p) \varphi_{K}\ \big|_{k,T}^{2} \\
& \lesssim \big|\sum\limits_{\substack{\mathcal{M}_{K}\supset T \\ K \in \mathcal{J}_{h} \backslash \mathcal{K}_{h}^{i}}} \lambda_{K}(Ev-p) \varphi_{K} \  \big|_{k,T}^{2} + \big|\sum\limits_{\substack{\mathcal{M}_{K}\supset T \\ K \in \mathcal{J}_{h} \backslash \mathcal{K}_{h}^{i}}}\lambda_{K}(p) \varphi_{K}\  \big|_{k,T}^{2} \\
& \lesssim h_{T}^{-2k}\|Ev-p\|_{0,\Delta_{T}}^{2} + h_{T}^{2-2k}\sum\limits_{\substack{\mathcal{M}_{K}\supset T \\ K \in \mathcal{J}_{h} \backslash \mathcal{K}_{h}^{i}}} |\lambda_{K}(p) |^{2}.
\end{split}
\end{equation}

From Lemma~\ref{lem:property of phi}(b) and the construction of the functional $\lambda_{K}$, it holds that 
$$
\lambda_{K}(p) = p(c_{K}) - \frac{1}{8}\frac{\partial^{2}p}{\partial x^{2}}L_{K}^{2} - \frac{1}{8}\frac{\partial^{2}p}{\partial y^{2}}H_{K}^{2},
$$
where $c_{K}= (x_{K},y_{K})$. Thus, by the Taylor's expansion, there exists some $a_{K}\in \mathcal{N}_{h}^{b}$, $e_{K}\in \mathcal{E}_{h}^{b}$, and a boundary cell $Q_{K}\in \mathcal{K}_{h}^{b}$, satisfying $a_{K}\in e_{K} \subset Q_{K}$, such that
\begin{align}\label{eq:lambdaP}
\lambda_{K}(p) = p(a_{K}) + (-1)^{\delta_{1}} \frac{\partial p }{\partial x}(a_{K})\frac{L_{K}}{2}+(-1)^{\delta_{2}}\frac{\partial p }{\partial y}(a_{K})\frac{H_{K}}{2} + (-1)^{\delta_{1}+\delta_{2}}\frac{\partial^{2} p }{\partial x \partial y}\frac{L_{K}H_{K}}{4},
\end{align}
where  $\delta_{1}$ and $\delta_{2}$ equals to $\pm 1$, and their specific values are determined by the relative position of $c_{K}$ and $a_{K}$. Since $v \in H^{2}_{0}(\Omega)$, it can be deduced that 
\begin{align}\label{eq:uH02}
|v|_{0,e_{K}} =\big|\tfrac{\partial v}{\partial x}\big|_{0,e_{K}} = \big|\tfrac{\partial v}{\partial y}\big|_{0,e_{K}} = \big|\tfrac{\partial^{2}v}{\partial x \partial y}\big|_{0,e_{K}} = 0.
\end{align}
From Lemma \ref{lem:traceThm} and \eqref{eq:uH02},  we have 
\begin{equation}\label{eq:traceP}
\begin{split}
& |p|_{0,e_{K}}^{2} = |v-p|_{0,e_{K}}^{2} \lesssim h_{Q_{K}}^{-1} |v-p|_{0,Q_{K}}^{2} + h_{Q_{K}} |v-p|_{1,Q_{K}}^{2}; \\
& \big|\tfrac{\partial p}{\partial x}\big|_{0,e_{K}}^{2} = \big|\tfrac{\partial v}{\partial x} -\tfrac{\partial p}{\partial x}\big|_{0,e_{K}}^{2} \lesssim h_{Q_{K}}^{-1} |v-p|_{1,Q_{K}}^{2} + h_{Q_{K}}|v-p|_{2,Q_{K}}^{2}; \\
& \big|\tfrac{\partial p}{\partial y}\big|_{0,e_{K}}^{2} = \big|\tfrac{\partial v}{\partial y} -\tfrac{\partial p}{\partial y}\big|_{0,e_{K}}^{2} \lesssim h_{Q_{K}}^{-1} |v-p|_{1,Q_{K}}^{2} + h_{Q_{K}} |v-p|_{2,Q_{K}}^{2}; \\
& \big|\tfrac{\partial^{2} p}{\partial x \partial y}\big|_{0,e_{K}}^{2} = \big|\tfrac{\partial^{2} v}{\partial x \partial y} -\tfrac{\partial^{2} p}{\partial x \partial y}\big|_{0,e_{K}}^{2} \lesssim h_{Q_{K}}^{-1} |v-p|_{2,Q_{K}}^{2} + h_{Q_{K}}  |v-p|_{3,Q_{K}}^{2}.
\end{split}
\end{equation}
A combination of Lemma \ref{lem:NormEquivalence},  \eqref{eq:bestP2}, and \eqref{eq:traceP} leads to 
\begin{align}\label{eq:s=3}
h_{T}^{-2k}\|Ev-p\|_{0,\Delta_{T}}^{2} + h_{T}^{2-2k}\sum\limits_{\substack{\mathcal{M}_{K}\supset T \\ K \in \mathcal{J}_{h} \backslash \mathcal{K}_{h}^{i}}} |\lambda_{K}(p)|^{2} \lesssim h_{T}^{2(3-k)} |Ev|_{3,\Delta_{T}} \ \mbox{ with } \ 0\leqslant k \leqslant 3.
\end{align}
By~\eqref{eq:diff PiEv and Pi0v}, \eqref{eq:lam(EV) phi}, \eqref{eq:s=3} and \eqref{eq:approxH02}, we derive $|v-\Pi_{h0}v|_{k,h}\lesssim h^{s-k}|v|_{3,\Omega}$ with $0\leqslant k \leqslant 3$.

For the case of a lower regularity that $v \in H^{2}_{0}(\Omega)$, we assume $p\in P_{1}(\widetilde{\Omega}_{h})$, and then $\lambda_{K}(p) = p(c_{K}) = p(a_{K}) + (-1)^{\delta_{1}} \frac{\partial p }{\partial x}(a_{K})\frac{L_{K}}{2}+(-1)^{\delta_{2}}\frac{\partial p }{\partial y}(a_{K})\frac{H_{K}}{2}$. For the case that $v \in H^{1}_{0}(\Omega)$, we utilize some $p\in P_{0}(\widetilde{\Omega}_{h})$, and then $\lambda_{K}(p) = p(c_{K}) = p(a_{K})$. By repeating the above process, similar results can be obtained for those two cases. Finally we complete the proof.
 \qed

\section{Reduced rectangular Morley element schemes for model problems}
\label{sec:convergence}

In this section, we establish the RRM element schemes for the fourth-order elliptic perturbation problem and the Helmholtz transmission eigenvalue problem, and present their respective convergence estimation. The analysis can be somehow alike with existing works, provided the optimal approximation has been obtained in the previous section, and we will not have the readers involved too much in the details. Though, for the fourth-order elliptic perturbation problem, the inhomogeneous coefficient seems not studied yet; we thus include the technical proofs of Theorem \ref{thm:errorRRM}, Lemma \ref{lem:regularity} and Theorem \ref{thm:uniform convergence} in Section \ref{sec:pf4ep} for the ease of the readers who is interested in the effect of the coefficients.

\subsection{Robust RRM scheme for fourth order elliptic perturbation problem}
\label{sec:rrmep}
The finite element scheme for \eqref{eq: pertubation_variational_form} is to find $u_h^{\rm{R}} \in V_{h0}^{\rm{R}}$ such that  
\begin{align}\label{eq:discrete form RRM}
\varepsilon^{2}a_{h}(u_{h}^{\rm{R}},v_{h}) + b_{h}(u_{h}^{\rm{R}},v_{h}) = (f,v_{h}), \quad \forall v_{h} \in V_{h0}^{\rm{R}}.
\end{align}
where $a_h(u_h^{\rm{R}}, v_h) := (\beta(\boldsymbol{x})\Delta_h u_h^{\rm{R}}, \Delta_h v_h)$ and $b_h(u_h^{\rm{R}}, v_h) = (\nabla_h u_h^{\rm{R}}, \nabla_h v_h)$. 

We define a discrete energy norm on $V_{h}$ as
$$
\|w\|_{\varepsilon, h}:=\sqrt{\varepsilon^2 |w|^2_{2,h}+|w|^2_{1,h}}.
$$
With the boundedness of $\beta	(x)$ and \eqref{eq: discrete strengthened Miranda-Talenti estimate},  it's easy to check the well-posedness of the discrete weak formulation by Lax-Milgram theorem. 


%
\begin{theorem}\label{thm:errorRRM}
Let $u$ and $u_{h}^{\rm R}$ be the solutions of \eqref{eq:model problem} and \eqref{eq:discrete form RRM}, respectively. For $ u~\in~H^{2}_{0}(\Omega) \cap~H^{3}(\Omega)$, it holds that
\begin{equation}
\| u-u_{h}^{\rm{R}} \|_{\varepsilon,h} \lesssim h |u|_{2,\Omega}  + \varepsilon h (|u|_{2,\Omega} + |u|_{3,\Omega}) + \varepsilon^{2} h   (|u|_{2,\Omega} + |u|_{3,\Omega}+ \|\Delta^{2}u\|_{0,\Omega}).
\end{equation}
Moreover, if further $\mathcal{G}_{h}$ is uniform, then 
\begin{equation}\| u-u_{h}^{\rm{R}} \|_{\varepsilon,h} \lesssim h^{2}|u|_{3,\Omega} + \varepsilon h (|u|_{2,\Omega} + |u|_{3,\Omega}) + \varepsilon^{2} h   (|u|_{2,\Omega} + |u|_{3,\Omega}+ \|\Delta^{2}u\|_{0,\Omega}).
\end{equation}
\end{theorem}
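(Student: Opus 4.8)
The plan is to regard \eqref{eq:discrete form RRM} as a coercive nonconforming discretization and to follow the classical route: a uniform coercivity bound plus a Strang-type splitting of the error into an approximation part and a consistency part. First I would record coercivity: by Lemma~\ref{lem: discrete_property_1} and $\beta(\boldsymbol{x})\geq\beta_{\min}>0$,
$$
\varepsilon^{2}a_{h}(w_{h},w_{h})+b_{h}(w_{h},w_{h})=\varepsilon^{2}\!\int_{\Omega}\!\beta\,|\Delta_{h}w_{h}|^{2}+|w_{h}|_{1,h}^{2}\;\geq\;\beta_{\min}\varepsilon^{2}\sum_{K\in\mathcal{G}_h}\!\int_{K}\!|\nabla^{2}w_{h}|^{2}+|w_{h}|_{1,h}^{2}\;\gtrsim\;\|w_{h}\|_{\varepsilon,h}^{2},
$$
uniformly in $\varepsilon$ and $h$, while boundedness of the form in $\|\cdot\|_{\varepsilon,h}$ follows from $\beta\in L^{\infty}$. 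Hence for every $v_{h}\in V_{h0}^{\rm R}$,
$$
\|u-u_{h}^{\rm R}\|_{\varepsilon,h}\;\lesssim\;\|u-v_{h}\|_{\varepsilon,h}+\sup_{0\neq w_{h}\in V_{h0}^{\rm R}}\frac{|E_{h}(u,w_{h})|}{\|w_{h}\|_{\varepsilon,h}},\qquad E_{h}(u,w_{h}):=\varepsilon^{2}a_{h}(u,w_{h})+b_{h}(u,w_{h})-(f,w_{h}).
$$
For the approximation part I would take $v_{h}=\Pi_{h0}u$ and invoke Theorem~\ref{thm:approxH02}, which gives $|u-\Pi_{h0}u|_{1,h}\lesssim h^{2}|u|_{3,\Omega}$ and $|u-\Pi_{h0}u|_{2,h}\lesssim h|u|_{3,\Omega}$, so $\|u-\Pi_{h0}u\|_{\varepsilon,h}\lesssim h^{2}|u|_{3,\Omega}+\varepsilon h|u|_{3,\Omega}$, which is already within the asserted bound; the non-improving $h|u|_{2,\Omega}$ term will come out of the consistency error.

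For the consistency error I would integrate by parts cell by cell, using the strong form $f=\varepsilon^{2}\Delta(\beta\Delta u)-\Delta u$. Two applications of Green's identity (legitimate for smooth $u$ and extended to $u\in H^{3}(\Omega)$ with $\Delta(\beta\Delta u)\in L^{2}(\Omega)$ by density, or by reading the boundary terms as $H(\mathrm{div})$ normal traces) give
$$
E_{h}(u,w_{h})=\underbrace{\big(b_{h}(u,w_{h})+(\Delta u,w_{h})\big)}_{=:E_{h}^{(1)}}\;+\;\varepsilon^{2}\underbrace{\sum_{K\in\mathcal{G}_h}\int_{\partial K}\big(\beta\Delta u\,\partial_{\mathbf{n}}w_{h}-\partial_{\mathbf{n}}(\beta\Delta u)\,w_{h}\big)}_{=:E_{h}^{(2)}},
$$
the boundary-edge contributions surviving only through the boundary condition $u=\partial_{\mathbf{n}}u=0$ on $\partial\Omega$ and through $w_{h}\in V_{h0}^{\rm R}$ (so $w_{h}$ vanishes at boundary vertices and $\fint_{e}\partial_{\mathbf{n}}w_{h}=0$ on boundary edges). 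The term $E_{h}^{(1)}$ is exactly the quantity bounded in Lemma~\ref{lem:consisRM}: part (a) gives $|E_{h}^{(1)}|\lesssim h|u|_{2,\Omega}|w_{h}|_{1,h}$ on a general shape-regular rectangular grid, whereas part (b) with $k=3$ gives $|E_{h}^{(1)}|\lesssim h^{2}|u|_{3,\Omega}|w_{h}|_{1,h}$ on a uniform grid. This single substitution accounts for the difference between the two displayed estimates.

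The heart of the argument is $\varepsilon^{2}E_{h}^{(2)}$, and the point to get right is $\varepsilon$-robustness: each boundary integral must carry a \emph{full} power of $h$, so that the total contributes $\varepsilon h(\cdots)|w_{h}|_{2,h}+\varepsilon^{2}h(\cdots)|w_{h}|_{1,h}\lesssim\varepsilon h(\cdots)\|w_{h}\|_{\varepsilon,h}+\varepsilon^{2}h(\cdots)\|w_{h}\|_{\varepsilon,h}$ rather than merely an $O(\varepsilon)$ term. For the $\beta\Delta u$ part one rewrites $\sum_{K}\int_{\partial K}\beta\Delta u\,\partial_{\mathbf{n}}w_{h}$ over interior and boundary edges and uses $\int_{e}\jump{\partial_{\mathbf{n}}w_{h}}=0$ (interior, inherited from $V_{h}^{\rm M}$) and $\int_{e}\partial_{\mathbf{n}}w_{h}=0$ (boundary, from $V_{h0}^{\rm R}$) to subtract the edge mean of $\beta\Delta u$; with the trace inequality of Lemma~\ref{lem:traceThm}, a Poincar\'e inequality, the inverse inequality for $P_{2}$ on rectangles, and $|\beta\Delta u|_{1,K}\lesssim|u|_{2,K}+|u|_{3,K}$ (since $\beta$ is smooth), this part is $\lesssim\varepsilon h(|u|_{2,\Omega}+|u|_{3,\Omega})\|w_{h}\|_{\varepsilon,h}$. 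For the $\partial_{\mathbf{n}}(\beta\Delta u)$ part one exploits that $\jump{w_{h}}|_{e}$ is a quadratic bubble vanishing at both endpoints of $e$ (continuity of $w_{h}$ at vertices), hence $\int_{e}\jump{\partial_{\mathbf{t}}w_{h}}=0$ and $\|\jump{w_{h}}\|_{0,e}\lesssim h_{K}^{3/2}|w_{h}|_{2,h}$; together with the inverse bound $h|w_{h}|_{2,h}\lesssim|w_{h}|_{1,h}$ and $\|\Delta(\beta\Delta u)\|_{0,\Omega}\lesssim\|\Delta^{2}u\|_{0,\Omega}+|u|_{3,\Omega}+|u|_{2,\Omega}$, this produces the $\varepsilon^{2}h(|u|_{2,\Omega}+|u|_{3,\Omega}+\|\Delta^{2}u\|_{0,\Omega})$ contribution. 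Collecting $E_{h}^{(1)}$, $\varepsilon^{2}E_{h}^{(2)}$ and the approximation estimate yields both assertions.

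I expect the main obstacle to be precisely this $\varepsilon$-robust bookkeeping of $E_{h}^{(2)}$ on the mesh skeleton, including the boundary cells: a crude Cauchy--Schwarz loses the extra $h$ and leaves an $O(\varepsilon)$ term, so one is forced to use \emph{both} weak-continuity moments of the rectangular-Morley degrees of freedom together with the inverse inequality and the vertex continuity of $V_{h0}^{\rm R}$ at every step. A secondary, more routine point is justifying the double integration by parts under the regularity available; for $u\in H^{2}_{0}(\Omega)\cap H^{3}(\Omega)$ and $f\in L^{2}(\Omega)$ one has $\Delta(\beta\Delta u)\in L^{2}(\Omega)$ with $\|\Delta(\beta\Delta u)\|_{0,\Omega}\lesssim\|\Delta^{2}u\|_{0,\Omega}+|u|_{3,\Omega}+|u|_{2,\Omega}$, which is why $\|\Delta^{2}u\|_{0,\Omega}$ (rather than a full $H^{4}$ seminorm) appears. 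These technical reasons are presumably why the full computation is deferred to Section~\ref{sec:pf4ep}.
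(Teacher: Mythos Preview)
Your overall architecture matches the paper's: Strang's lemma, approximation via $\Pi_{h0}$ and Theorem~\ref{thm:approxH02}, and identification of the second-order consistency piece $E_h^{(1)}=b_h(u,w_h)+(\Delta u,w_h)$ with Lemma~\ref{lem:consisRM} (parts (a) and (b) giving the two displayed estimates). The difference lies in how the fourth-order consistency term is handled.

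The paper does \emph{not} integrate by parts twice and confront $\partial_{\mathbf n}(\beta\Delta u)$ on the skeleton. Instead it inserts the conforming bilinear nodal interpolant $\Pi_{h0}^{\rm b}w_h\in H_0^1(\Omega)$ and splits
\[
E_{\varepsilon,h}(u,w_h)=\underbrace{\varepsilon^2\!\sum_K\!\int_K\!\big(\beta\Delta u\,\Delta w_h-\Delta(\beta\Delta u)\,\Pi_{h0}^{\rm b}w_h\big)}_{R_1}
+\underbrace{\sum_K\!\int_K(\nabla u\!\cdot\!\nabla w_h+\Delta u\,w_h)}_{R_2}
+\underbrace{\varepsilon^2\!\int_\Omega\!\Delta(\beta\Delta u)(\Pi_{h0}^{\rm b}w_h-w_h)}_{R_3}.
\]
Because $\Pi_{h0}^{\rm b}w_h\in H_0^1(\Omega)$, a single global Green's formula on $R_1$ produces only the well-defined edge term $\sum_K\int_{\partial K}\beta\Delta u\,\partial_{\mathbf n}w_h$ (with $\beta\Delta u\in H^1$, so its trace is in $L^2(e)$) plus the volume term $\sum_K\int_K\nabla(\beta\Delta u)\cdot(\nabla w_h-\nabla\Pi_{h0}^{\rm b}w_h)$, each carrying a full $h$; and $R_3$ is a pure volume pairing of $\Delta(\beta\Delta u)\in L^2(\Omega)$ with $\Pi_{h0}^{\rm b}w_h-w_h$, bounded by $\varepsilon^2 h\|\Delta(\beta\Delta u)\|_{0,\Omega}|w_h|_{1,h}$.

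This is precisely where your sketch has a gap. Under the stated regularity $u\in H^3(\Omega)$ with $\Delta^2u\in L^2(\Omega)$, the field $\nabla(\beta\Delta u)$ lies only in $H(\mathrm{div},\Omega)$, so $\partial_{\mathbf n}(\beta\Delta u)|_e\in H^{-1/2}(e)$, not $L^2(e)$. Your estimate of the term $\sum_e\int_e\partial_{\mathbf n}(\beta\Delta u)\,\llbracket w_h\rrbracket$ relies on the $L^2$ edge bound $\|\llbracket w_h\rrbracket\|_{0,e}\lesssim h_K^{3/2}|w_h|_{2,h}$, but the pairing on the $u$-side is not an $L^2$ inner product; moreover $\int_e\llbracket w_h\rrbracket\neq 0$ for the quadratic bubble, so you cannot subtract a mean to gain regularity. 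One can try to rescue this via scaled $H^{-1/2}\times H^{1/2}$ duality, but that argument is not the one you wrote, and it is exactly what the $\Pi_{h0}^{\rm b}$ trick circumvents: by keeping $\Delta(\beta\Delta u)$ in a volume integral ($R_3$) rather than forcing its normal trace onto the skeleton, the paper never needs more than $L^2$ regularity of $\Delta(\beta\Delta u)$ and $H^1$ regularity of $\beta\Delta u$.
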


From Theorem~\ref{thm:errorRRM}, the RRM element, which uses piecewise quadratic functions, ensures linear convergence in the energy norm as long as $|u|_{2,\Omega}$ , $|u|_{3,\Omega}$ and $\|\Delta^2 u\|_{0,\Omega}$ are uniformly bounded. When $\varepsilon$ approaches zero, the convergence rate in the energy norm approaches $O(h^{2})$ on uniform grids if $u$ is sufficiently smooth.  

However, $|u|_{2,\Omega}$ and $|u|_{3,\Omega}$ may blow up when $\varepsilon$ tends to zero. When $\beta$ is a constant, the result below is the one given in \cite{Nilssen;Tai;Winther2001}. Let $u^{0}$ be the solution of the following boundary value problem:
\begin{equation}\label{eq:2nd problem}
\left\{
\begin{array}{rl}
  -\Delta u^{0} = f, & \mbox{ in } \ \Omega , \\
u^{0}   = 0, & \mbox{ on } \ \partial\Omega .
\end{array}
\right.
\end{equation}
Next we will show the uniform result by the  regularity. 
\begin{lemma}\label{lem:regularity}
For a convex domain $\Omega$, there exist a constant $C$, independent of $\varepsilon$ and $f$, such that
\begin{align}
|u|_{2,\Omega} + \varepsilon |u|_{3,\Omega} & \leqslant C \varepsilon^{-\frac{1}{2}} \|f\|_{0,\Omega}; \\
 \big|u- u^{0}\big|_{1,\Omega} & \leqslant C \varepsilon^{\frac{1}{2}} \|f\|_{0,\Omega}.
\end{align}
\end{lemma}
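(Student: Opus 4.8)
\textbf{Proof plan for Lemma \ref{lem:regularity}.}

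The plan is to exploit the singular-perturbation structure of \eqref{eq:model problem} by testing the variational identity \eqref{eq: pertubation_variational_form} against well-chosen functions and by invoking elliptic regularity on the convex domain $\Omega$. First I would establish the basic energy bound: taking $v = u$ in \eqref{eq: pertubation_variational_form} gives $\varepsilon^2 \int_\Omega \beta |\Delta u|^2 + |u|_{1,\Omega}^2 = (f,u)$, and since $\beta \geqslant \beta_{\min} > 0$, the left side controls $\varepsilon^2 \beta_{\min} |\Delta u|_{0,\Omega}^2 + |u|_{1,\Omega}^2$. Using $(f,u) \leqslant \|f\|_{0,\Omega}\|u\|_{0,\Omega} \lesssim \|f\|_{0,\Omega}|u|_{1,\Omega}$ (Poincaré on $H_0^1$) and absorbing, one gets $|u|_{1,\Omega} \lesssim \|f\|_{0,\Omega}$ and $\varepsilon |\Delta u|_{0,\Omega} \lesssim \|f\|_{0,\Omega}$. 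On the convex domain, the Miranda--Talenti/Grisvard equality \eqref{eq:Grisvard} (or just \eqref{eq:MTe}) upgrades $\varepsilon|\Delta u|_{0,\Omega}$ to $\varepsilon|u|_{2,\Omega} \lesssim \|f\|_{0,\Omega}$, which already is better than the claimed $\varepsilon^{-1/2}$ bound on $\varepsilon|u|_{2,\Omega}$; the point of the $\varepsilon^{-1/2}$ scaling is that it is what survives uniformly for the higher-order term.

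Next I would bound $|u|_{2,\Omega}$ and $\varepsilon|u|_{3,\Omega}$ together. Write the equation as $\varepsilon^2 \Delta(\beta \Delta u) = f + \Delta u =: g$, so $\Delta(\beta\Delta u) = \varepsilon^{-2} g$ with $\|g\|_{0,\Omega} \leqslant \|f\|_{0,\Omega} + |\Delta u|_{0,\Omega} \lesssim \|f\|_{0,\Omega} + \varepsilon^{-1}\|f\|_{0,\Omega} \lesssim \varepsilon^{-1}\|f\|_{0,\Omega}$. Setting $w = \beta\Delta u$, which lies in $H^1_0$-type space after accounting for the boundary conditions (here one must be a little careful: $u \in H_0^2$ gives $u = \partial_n u = 0$, and standard bootstrapping for this problem shows $w \in H^2(\Omega)\cap H_0^1(\Omega)$ when $\Omega$ is convex and $\beta$ smooth), convex-domain $H^2$-regularity for the Poisson problem gives $|w|_{2,\Omega} \lesssim \varepsilon^{-2}\|g\|_{0,\Omega} \lesssim \varepsilon^{-3}\|f\|_{0,\Omega}$, and by interpolation/duality $|w|_{1,\Omega} \lesssim (\|g\|_{0,\Omega}\varepsilon^{-2} \cdot \|w\|_{0,\Omega})^{1/2}$. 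Then $\Delta u = \beta^{-1}w$ and the product rule, together with the boundedness and smoothness of $\beta$ and $\beta^{-1}$, transfer these estimates to $u$: $|u|_{2,\Omega} \lesssim |\Delta u|_{0,\Omega} + (\text{lower order}) $ and $|u|_{3,\Omega} \lesssim |\Delta u|_{1,\Omega} + \ldots \lesssim |w|_{1,\Omega} + |w|_{0,\Omega}$, again using \eqref{eq:Grisvard}-type elliptic regularity to pass from $\|\Delta u\|$ to $|u|_2$ and from $\|\nabla\Delta u\|$ to $|u|_3$ on the convex domain. Tracking the $\varepsilon$ powers through a duality argument — pairing $\varepsilon^2\Delta(\beta\Delta u) - \Delta u = f$ against the solution of an auxiliary adjoint problem — yields the sharp exponents $|u|_{2,\Omega} + \varepsilon|u|_{3,\Omega} \lesssim \varepsilon^{-1/2}\|f\|_{0,\Omega}$; this is the standard boundary-layer scaling for fourth-order singular perturbations and the bookkeeping is where the exponent $1/2$ is pinned down.

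For the second estimate, the idea is to compare $u$ with $u^0$ directly. Subtract the two problems: $u - u^0 \in H_0^1(\Omega)$ (note $u \in H_0^2 \subset H_0^1$ and $u^0 \in H_0^1$), and for $v \in H_0^1(\Omega)$ one has $b(u - u^0, v) = (\nabla(u-u^0),\nabla v) = -(\Delta(u-u^0),v)$ formally, which from the equations equals $-\varepsilon^2(\Delta(\beta\Delta u),v)$ when $v$ is smooth enough, i.e. $\varepsilon^2 a(u,v)$ after integration by parts. Taking $v = u - u^0$ (which requires $u^0 \in H^2$, valid on convex $\Omega$, so that the integration by parts is justified), we get $|u - u^0|_{1,\Omega}^2 = -\varepsilon^2\int_\Omega \beta\Delta u\,\Delta(u - u^0) = -\varepsilon^2\int_\Omega\beta\Delta u\,\Delta u + \varepsilon^2\int_\Omega\beta\Delta u\,\Delta u^0 \leqslant \varepsilon^2\|\beta\|_{0,\infty}|\Delta u|_{0,\Omega}|\Delta u^0|_{0,\Omega} \lesssim \varepsilon^2 \cdot \varepsilon^{-1}\|f\|_{0,\Omega} \cdot \|f\|_{0,\Omega} = \varepsilon\|f\|_{0,\Omega}^2$, using $|\Delta u^0|_{0,\Omega} = \|f\|_{0,\Omega}$ from \eqref{eq:2nd problem} and the first-step bound $\varepsilon|\Delta u|_{0,\Omega} \lesssim \|f\|_{0,\Omega}$. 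Taking square roots gives $|u - u^0|_{1,\Omega} \lesssim \varepsilon^{1/2}\|f\|_{0,\Omega}$, as claimed.

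The main obstacle is the first estimate: honestly pinning down the exponent $-1/2$ on $\varepsilon|u|_{3,\Omega}$ (and $0$ effectively on $|u|_{2,\Omega}$, i.e. $\varepsilon^{-1/2}$) requires a careful duality/boundary-layer argument rather than the crude energy estimate, and one must justify the regularity $\beta\Delta u \in H^2(\Omega)\cap H_0^1(\Omega)$ so that convex-domain elliptic regularity applies to the reduced second-order problem for $w = \beta\Delta u$; the interaction of the variable coefficient $\beta$ with these regularity statements (product rule spreading derivatives onto $\beta$) needs the smoothness hypothesis on $\beta$ and is the place where the constant's independence of $\varepsilon$ must be verified. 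The second estimate, by contrast, is a short energy comparison once the first is in hand.
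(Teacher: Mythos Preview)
Your argument for the second estimate drops the key boundary term. When you integrate by parts twice to pass from $-\varepsilon^2(\Delta(\beta\Delta u),\,u-u^0)$ to $-\varepsilon^2(\beta\Delta u,\,\Delta(u-u^0))$, the second integration produces $\varepsilon^2\int_{\partial\Omega}(\beta\Delta u)\,\partial_{\mathbf n}(u-u^0)\,ds$; since $\partial_{\mathbf n}u=0$ but $\partial_{\mathbf n}u^0\neq 0$, this equals $-\varepsilon^2\int_{\partial\Omega}(\beta\Delta u)\,\partial_{\mathbf n}u^0\,ds$, which does \emph{not} vanish. This is precisely the boundary-layer contribution, and it is the dominant term: handling it (as in the paper, following Nilssen--Tai--Winther) requires the trace bound $\int_{\partial\Omega}|\Delta u|^2\lesssim\|\Delta u\|_0\|\Delta u\|_1$ and then absorbing a $\delta\varepsilon^3\|\Delta u\|_0\|u\|_3$ term, which in turn forces you to already control $\varepsilon^2\|u\|_3$ in terms of $|u-u^0|_1$. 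So the second estimate cannot be closed by the short energy comparison you wrote; it is genuinely coupled to the first.

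For the first estimate, your plan rests on $w=\beta\Delta u\in H^2(\Omega)\cap H^1_0(\Omega)$, but this is false: $u\in H^2_0$ gives $u=\partial_{\mathbf n}u=0$ on $\partial\Omega$, yet $\Delta u$ need not vanish there (and typically does not --- this is exactly the boundary layer). So you cannot invoke Poisson $H^2$-regularity for $w$ with zero Dirichlet data. The paper avoids this by using fourth-order regularity for the clamped biharmonic problem on a convex polygon directly: from $\Delta(\beta\Delta u)=\varepsilon^{-2}\Delta(u-u^0)$ one gets $\|u\|_3\lesssim\varepsilon^{-2}\|\Delta(u-u^0)\|_{-1}\leqslant\varepsilon^{-2}|u-u^0|_1$. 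This feeds into the energy identity with $v=u-u^0$ (including the boundary term above), and the two inequalities close simultaneously to give $\varepsilon^2\|\Delta u\|_0^2+|u-u^0|_1^2\lesssim\varepsilon\|f\|_0^2$, from which both claimed bounds follow at once. The estimates are not independent; your attempt to decouple them is where the plan breaks.
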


\begin{theorem}\label{thm:uniform convergence}
Let $\Omega$ be convex and $f\in L^{2}(\Omega)$. Then 
\begin{equation}
\| u-u_{h}^{\rm{R}}\|_{\varepsilon,h}\lesssim h^{\frac{1}{2}}\|f\|_{0,\Omega} + \varepsilon h^{\frac{1}{2}}\|f\|_{0,\Omega}.
\end{equation}
\end{theorem}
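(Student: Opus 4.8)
The plan is to prove the bound separately on the two ranges $\varepsilon\geq h$ and $\varepsilon<h$, the point being that the $\varepsilon$-robust a priori estimate of Theorem~\ref{thm:errorRRM} is directly useful only when $\varepsilon$ is not too small (its right-hand side blows up as $\varepsilon\to0$), so for the small-$\varepsilon$ range one compares instead with the reduced second-order problem~\eqref{eq:2nd problem}.

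On the range $\varepsilon\geq h$ I would simply feed the regularity estimates of Lemma~\ref{lem:regularity} into Theorem~\ref{thm:errorRRM}. Besides $|u|_{2,\Omega}\lesssim\varepsilon^{-1/2}\|f\|_{0,\Omega}$ and $|u|_{3,\Omega}\lesssim\varepsilon^{-3/2}\|f\|_{0,\Omega}$, one needs a bound on $\|\Delta^{2}u\|_{0,\Omega}$; this follows from the equation, since $\varepsilon^{2}\Delta(\beta\Delta u)=f+\Delta u$ together with $\beta$ smooth and $\beta\geq\beta_{\min}>0$ give $\varepsilon^{2}\|\Delta^{2}u\|_{0,\Omega}\lesssim\|f\|_{0,\Omega}+|u|_{2,\Omega}+\varepsilon^{2}(|u|_{2,\Omega}+|u|_{3,\Omega})\lesssim\varepsilon^{-1/2}\|f\|_{0,\Omega}$. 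Substituting, every term on the right of Theorem~\ref{thm:errorRRM} is $\lesssim(h\varepsilon^{-1/2}+\varepsilon^{1/2}h)\|f\|_{0,\Omega}$, and since $h/\varepsilon\leq1$ one has $h\varepsilon^{-1/2}\leq h^{1/2}$ and $\varepsilon^{1/2}h\leq\varepsilon h^{1/2}$, which is the claimed bound on this range.

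On the range $\varepsilon<h$ I would split $u-u_h^{\rm R}=(u-u^{0})+(u^{0}-u_h^{\rm R})$ with $u^{0}$ the solution of~\eqref{eq:2nd problem}. For the first piece, Lemma~\ref{lem:regularity} gives $|u-u^{0}|_{1,\Omega}\lesssim\varepsilon^{1/2}\|f\|_{0,\Omega}$, while $\varepsilon|u-u^{0}|_{2,\Omega}\leq\varepsilon\bigl(|u|_{2,\Omega}+|u^{0}|_{2,\Omega}\bigr)\lesssim\varepsilon^{1/2}\|f\|_{0,\Omega}$ (using the $H^{2}$-regularity $|u^{0}|_{2,\Omega}\lesssim\|f\|_{0,\Omega}$ on the convex $\Omega$), so $\|u-u^{0}\|_{\varepsilon,h}\lesssim\varepsilon^{1/2}\|f\|_{0,\Omega}\leq h^{1/2}\|f\|_{0,\Omega}$. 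For the second piece I would take an interpolant $w_h\in V_{h0}^{\rm R}$ of $u^{0}$ — constructed from $Eu^{0}$ through the extension operator of Lemma~\ref{lem:extension} and the interpolation machinery of Section~\ref{sec:rrmscheme} (cf.\ Lemma~\ref{lemma:ccc}) — with $|u^{0}-w_h|_{1,h}\lesssim h^{1/2}|u^{0}|_{2,\Omega}$ and $\varepsilon|w_h|_{2,h}\lesssim h^{1/2}\|f\|_{0,\Omega}$ (both inequalities exploit $\varepsilon<h$; the loss to the exponent $1/2$ comes from $u^{0}\notin H_0^2(\Omega)$, whose normal derivative does not vanish on $\partial\Omega$). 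Then $\|u^{0}-w_h\|_{\varepsilon,h}\lesssim h^{1/2}\|f\|_{0,\Omega}$, and for $v_h:=w_h-u_h^{\rm R}\in V_{h0}^{\rm R}$ I would use coercivity through the discrete Grisvard equality (Lemma~\ref{lem: discrete_property_1}, which yields $\varepsilon^{2}a_h(v_h,v_h)+b_h(v_h,v_h)\cequiv\|v_h\|_{\varepsilon,h}^{2}$), the discrete scheme~\eqref{eq:discrete form RRM}, and the identity $(f,v_h)=-(\Delta u^{0},v_h)=b_h(u^{0},v_h)-\sum_e\int_e\partial_{\mathbf{n}}u^{0}\,\jump{v_h}$ to reach
\[
\|v_h\|_{\varepsilon,h}^{2}\ \lesssim\ \varepsilon^{2}a_h(w_h,v_h)+b_h(w_h-u^{0},v_h)+\bigl(b_h(u^{0},v_h)+(\Delta u^{0},v_h)\bigr).
\]
The first term is $\lesssim\varepsilon|w_h|_{2,h}\,\|v_h\|_{\varepsilon,h}\lesssim h^{1/2}\|f\|_{0,\Omega}\|v_h\|_{\varepsilon,h}$; the second is $\lesssim|u^{0}-w_h|_{1,h}\|v_h\|_{\varepsilon,h}\lesssim h^{1/2}\|f\|_{0,\Omega}\|v_h\|_{\varepsilon,h}$; the third is the $H^{1}_{0}$-type consistency error of the (reduced) rectangular Morley space and is bounded by Lemma~\ref{lem:consisRM}(a) as $\lesssim h|u^{0}|_{2,\Omega}|v_h|_{1,h}\lesssim h\|f\|_{0,\Omega}\|v_h\|_{\varepsilon,h}$. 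Hence $\|v_h\|_{\varepsilon,h}\lesssim h^{1/2}\|f\|_{0,\Omega}$, so $\|u^{0}-u_h^{\rm R}\|_{\varepsilon,h}\lesssim h^{1/2}\|f\|_{0,\Omega}$ by the triangle inequality, and combining the two ranges finishes the proof.

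The main obstacle is the range $\varepsilon<h$: no a priori estimate survives $\varepsilon\to0$, so the error analysis has to be rebuilt around the reduced solution $u^{0}$, and the accompanying interpolation of $u^{0}$ into $V_{h0}^{\rm R}$ is only $O(h^{1/2})$-accurate because of the boundary-condition mismatch. What makes the argument close is that $\varepsilon<h$ kills all $\varepsilon^{2}$-weighted contributions — in particular the (large) broken second-order seminorm $|w_h|_{2,h}$ of the interpolant — leaving only the $O(h^{1/2})$ terms already inherent to the second-order part.
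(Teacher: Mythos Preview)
Your approach is correct in outline but takes a genuinely different route from the paper. The paper never splits the whole argument by the relation between $\varepsilon$ and $h$; instead it stays within Strang's lemma for all $\varepsilon$ and sharpens both the approximation and consistency estimates separately. For the approximation term it interpolates $u\in H_0^2(\Omega)$ (not $u^0$) and uses a multiplicative trick: from Theorem~\ref{thm:approxH02} one has both $|u-\Pi_{h0}u|_{2,h}\lesssim|u|_{2,\Omega}$ and $|u-\Pi_{h0}u|_{2,h}\lesssim h|u|_{3,\Omega}$, hence $\varepsilon^2|u-\Pi_{h0}u|_{2,h}^2\lesssim h\,(\varepsilon^{1/2}|u|_2)(\varepsilon^{3/2}|u|_3)\lesssim h\|f\|_0^2$ by Lemma~\ref{lem:regularity}; an analogous splitting through $u-u^0$ handles the $H^1$ part. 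For the consistency term, the paper re-estimates $R_1$ using the half-power trace inequality and then splits only $R_2+R_3$ into the two $\varepsilon$--$h$ regimes. Your range split, by contrast, reuses Theorem~\ref{thm:errorRRM} wholesale on $\varepsilon\geq h$ and for $\varepsilon<h$ replaces Strang's lemma by a direct coercivity argument around $u^0$; this is more modular but buys you an extra obligation the paper avoids.

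That obligation is the interpolation step $|u^0-w_h|_{1,h}\lesssim h^{1/2}|u^0|_{2,\Omega}$ and $|w_h|_{2,h}\lesssim h^{-1/2}|u^0|_{2,\Omega}$ for some $w_h\in V_{h0}^{\rm R}$. You correctly diagnose that the $h^{1/2}$ loss comes from $u^0\notin H_0^2(\Omega)$, but neither Lemma~\ref{lemma:ccc} nor Theorem~\ref{thm:approxH02} gives this directly: the former produces $\widetilde\Pi_h Eu^0$, which does not lie in $V_{h0}^{\rm R}$, and the latter requires $H_0^2$. To close this you must rerun the boundary-term analysis in the proof of Theorem~\ref{thm:approxH02} with $p\in P_1$, using only $u^0=0$ (hence also its tangential derivative) on $\partial\Omega$; the normal-derivative term $\partial p/\partial\mathbf{n}(a_K)$ survives and contributes, after the trace inequality and summation over the $O(h^{-1})$ boundary edges, exactly the $h^{1/2}$ rate you claim. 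This is doable with the tools in the paper but is not a one-line citation, so you should spell it out. The paper sidesteps the issue entirely by interpolating $u$ rather than $u^0$, at the price of the multiplicative bookkeeping above.
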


\subsection{Optimal scheme for Helmholtz transmission eigenvalue problem}
In this part, we apply the RRM scheme for Helmholtz transmission eigenvalue problem. The discrete weak formulation corresponding to \eqref{eq: variational_form_Helm} is to  find $(\tau_h, u_h)\in \mathbb{R}\times V_{h0}^{\rm R}$ such that $\mathcal{B}_h(u_h, u_h) = 1$ and
\begin{equation} \label{eq: discrete_transmission_eigs}
	\mathcal{A}_{\tau_h,h}(u_h, v_h)  = \tau_h \mathcal{B}_h(u_h, v_h).
\end{equation}


\begin{theorem}  \label{thm: convergence rate of Helm_transmission}
Let $(\tau,u)$, $(\tau_h,u_h)$ be the solution of \eqref{eq: variational_form_Helm} and \eqref{eq: discrete_transmission_eigs}. Let $u\in H^3(\Omega) \cap H_0^2(\Omega)$ and the domain be convex. Under the assumptions of Lemma 3.2 in \cite{sun2011iterative}, then we can obtain the following approximate results
\begin{equation}
|u - u_h|_{2,h} \lesssim h,\quad	|\tau - \tau_h|\lesssim h^2.
\end{equation}
\end{theorem}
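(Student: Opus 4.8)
The plan is to treat \eqref{eq: variational_form_Helm}--\eqref{eq: discrete_transmission_eigs} as a nonconforming Galerkin approximation of a self-adjoint generalized eigenvalue problem and to invoke the abstract spectral approximation theory (as, e.g., in \cite{sun2011iterative}), which reduces everything to two ingredients: a consistency/approximation estimate for the associated source (solution) operator, and control of the nonconformity errors in the bilinear forms $\mathcal{A}_{\tau,h}$ and $\mathcal{B}_h$. First I would introduce the continuous and discrete solution operators $T$ and $T_h$ defined by $\mathcal{A}_\tau(Tf,v)=\mathcal{B}(f,v)$ and $\mathcal{A}_{\tau,h}(T_h f,v_h)=\mathcal{B}_h(f,v_h)$; since $\mathcal{A}_\tau$ is coercive and $\mathcal{B}$ is symmetric nonnegative on $H^2_0(\Omega)\times H^2_0(\Omega)$, and since the discrete forms inherit these properties on $V_{h0}^{\rm R}$ thanks to Lemma \ref{lem: discrete_property_1} (which makes $\sum_K\int_K(\Delta v_h)^2$ coercive, hence $\mathcal{A}_{\tau,h}$ coercive for admissible $\tau$), both operators are well defined, self-adjoint with respect to $\mathcal{B}(\cdot,\cdot)$, and $(\tau,u)$ is an eigenpair iff $\tau^{-1}$ is an eigenvalue of $T$ (resp. $T_h$). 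The eigenvalue and eigenfunction errors are then bounded, via the Babu\v{s}ka--Osborn framework, by $\|(T-T_h)|_{M}\|$ where $M$ is the (finite-dimensional) eigenspace, plus a consistency term measuring how far $\mathcal{A}_{\tau,h},\mathcal{B}_h$ are from $\mathcal{A}_\tau,\mathcal{B}$ on $M\times V_{h0}^{\rm R}$.

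Next I would estimate $\|(T-T_h)u\|$ in the broken $H^2$ norm for $u\in M$, i.e.\ the error for the associated source problem, by a standard Strang-type argument: write $Tu-T_hu = (Tu-\Pi_{h0}Tu) + (\Pi_{h0}Tu - T_hu)$, bound the first term by Theorem \ref{thm:approxH02} (using $s=3$, which is available since the domain is convex and $Tu\in H^3(\Omega)\cap H^2_0(\Omega)$ by elliptic regularity for the bi-Laplace-type problem), and bound the second (discrete) term using discrete coercivity together with the consistency errors. The consistency errors come from (i) the nonconformity of $V_{h0}^{\rm R}$ — here the key estimate is Lemma \ref{lem:consisRM}, which controls $|b_h(v,v_h)+(\Delta v,v_h)|$ and hence the error in the first-order form $\mathcal{B}$ and the lower-order $\tau$-terms — and (ii) the fact that $\mathcal{A}_{\tau,h}$ replaces $\nabla^2$-type quantities; but Lemma \ref{lem: discrete_property_1} lets one rewrite $\sum_K\int_K \Delta u_h\Delta v_h$ as the broken Hessian form, so the analysis of $\mathcal{A}_{\tau,h}$ reduces to that of a genuine broken-$H^2$ bilinear form plus lower-order terms handled again by Lemma \ref{lem:consisRM}. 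Collecting, one gets $\|(T-T_h)u\|_{2,h}\lesssim h\|u\|_{3,\Omega}\lesssim h$ for $u$ in the eigenspace, and this yields $|u-u_h|_{2,h}\lesssim h$.

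Finally, for the eigenvalue, I would use the classical doubling of the convergence rate for self-adjoint problems: $|\tau-\tau_h|$ is controlled by $\|(T-T_h)u\|\cdot\|(T-T_h^*)u^*\|$ plus consistency terms, and since the problem is self-adjoint (with respect to $\mathcal{B}$), $T_h^*=T_h$ in the appropriate sense, so the leading term is $O(h)\cdot O(h)=O(h^2)$; one must also check that the consistency terms $\mathcal{A}_\tau(u,v_h)-\mathcal{A}_{\tau,h}(u,v_h)$ and $\mathcal{B}(u,v_h)-\mathcal{B}_h(u,v_h)$, evaluated with $v_h=u_h$ or $v_h$ the interpolant, are themselves $O(h^2)$ when $u$ is smooth — this follows from the second-order consistency bounds in Lemma \ref{lem:consisRM} on uniform grids, or is absorbed because the eigenfunction is in $H^3\cap H^2_0$ and the nonconformity contributes at the same $O(h^2)$ order after pairing two $O(h)$ factors. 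The main obstacle I anticipate is the careful bookkeeping of the nonconformity/consistency terms: unlike a conforming method, $\mathcal{A}_{\tau,h}$ and $\mathcal{B}_h$ are only defined cellwise, so one must verify that every cross term arising in the Strang lemma and in the eigenvalue identity genuinely decays at the claimed rate, and in particular that the $\tau$-dependent lower-order perturbations (which involve $(u,v_h)$ and $(\Delta_h u_h, v_h)$-type terms) are dominated — this is exactly where Lemma \ref{lem:consisRM}(a) and (b) are indispensable, and where one has to be attentive to whether the grid is assumed uniform for the sharp $h^2$ eigenvalue rate.
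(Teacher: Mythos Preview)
Your proposal is essentially correct and follows the same approach as the paper: reduce to an $O(h)$ error estimate for the associated source problem (bi-Laplace with weight $\beta$) via Strang's lemma, using the approximation result of Theorem~\ref{thm:approxH02} and the discrete Grisvard identity of Lemma~\ref{lem: discrete_property_1}, then invoke the Babu\v{s}ka--Osborn framework (as carried out in \cite{xi2020high,sun2011iterative}) to obtain the $O(h)$ eigenfunction and $O(h^2)$ eigenvalue rates. The paper's own proof is much terser---it simply asserts the source estimate $|u-u_h^{\rm R}|_{2,h}\lesssim h|u|_{3,\Omega}$ and cites \cite{xi2020high,babuska1991eigenvalue} for the eigenvalue theory---so your elaboration of the solution-operator mechanism is compatible with, and more explicit than, what appears there; one small caution is that the fourth-order consistency term is controlled not by Lemma~\ref{lem:consisRM} (which handles $b_h$) but by the edge-average continuity of $\partial_{\mathbf n}w_h$ in $V_{h0}^{\rm R}$, as in the proof of Theorem~\ref{thm:errorRRM}.
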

	
\begin{proof}
Following \cite[Theorem 12] {xi2020high}, let $u$ be the solution of bi-Laplace equation \eqref{eq:inbl}. 
then the corresponding discrete formulation is to find $u_h^{\rm{R}} \in V_{h0}^{\rm{R}}$ such that
$$
(\beta(\boldsymbol{x})\Delta_h u_h^{\rm{R}}, \Delta_h v_h) = (f,v_h), \forall v_h \in V_{h0}^{\rm{R}}.
$$
It's straightforward to derive that 
\begin{equation}  \label{eq:error_biLap}
|u - u_h^R|_{2,h} \lesssim h|u|_{3,\Omega}.
\end{equation}
Combining \eqref{eq:error_biLap} and the classical theory of nonconforming finite element method (c.f. \cite{babuska1991eigenvalue}), this theorem can be established.
\end{proof}

\paragraph{\bf Numerical implementation} 
We follow the techniques from \cite{xi2020high}. Let $\{\varphi\}_{j = 1}^{N}$ be a basis for $v_{h0}^{\rm R}$ and the corresponding FEM solution $u_h = \sum_{j = 1}^N  u_j\varphi_j$. We need the following matrices in the discrete case and obtain the discretized quadratic eigenvalue problem
\begin{equation} \label{eq: eigs_quadratic}
	(A + \tau_h B + \tau_h^2 C)x = 0,
\end{equation}
where $x = (u_1,...,u_N)^T$. The computation of matrices $A, B, C$ involves numerical integration of basis functions with non-constant coefficients.

\begin{table}[!ht]
\centering
	\begin{tabularx}{12cm}{p{1cm}<{\centering} p{4cm}<{\centering} X<{\centering}     }
		 \text{Matrix} & \text{Dimension} & Definition \\
		\midrule
		A & $N\times N$ & $A_{i,j} = \int_\Omega \frac{1}{\beta-1}\Delta\varphi_i\Delta\varphi_j $ \\
		B & $N\times N$ & $B_{i,j} = \int_\Omega \frac{1}{\beta-1}\Delta\varphi_i\varphi_j+\frac{1}{\beta-1}\varphi_i\Delta\varphi_j - \nabla \varphi_i \cdot \nabla \varphi_j$ \\
		C & $N\times N$ & $C_{i,j} = \int_{\Omega} \frac{\beta}{\beta - 1}\varphi_i\varphi_j$\\
	\end{tabularx}

\end{table}
For \eqref{eq: eigs_quadratic}, in practical computation, we convert it to the linear eigenvalue problem
$$
\left(\begin{array}{cc}
	-B & -A\\
	I & 0 
\end{array}\right)
\left(  \begin{array}{c}
	p_1\\
	p_2
\end{array}      \right) = \tau_h \left(\begin{array}{cc}
	C & 0\\
	0 & I 
\end{array}\right)
\left(  \begin{array}{c}
	p_1\\
	p_2
\end{array}      \right) ,
$$
where $p_2=x, p_1=\tau x$ and use MATLAB function "eigs" to solve. 

\section{Numerical experiments}
\label{sec:experiments}

We consider uniform subdivisions as well as non-uniform subdivisions. The series of nonuniform subdivisions are obtained by firstly subdividing $\Omega$ to a series of finer and finer uniform subdivisions, and then refining once each uniform subdivision by a same ratio to obtain a non-uniform subdivision.  Figure~\ref{fig:non-uniformgrid}  illustrates how non-uniform subdivisions are generated.  Numerical examples of the model problem~\eqref{eq:model problem} are given below. 
\begin{figure}[!htbp]
\centering
\includegraphics[height=0.29\hsize]{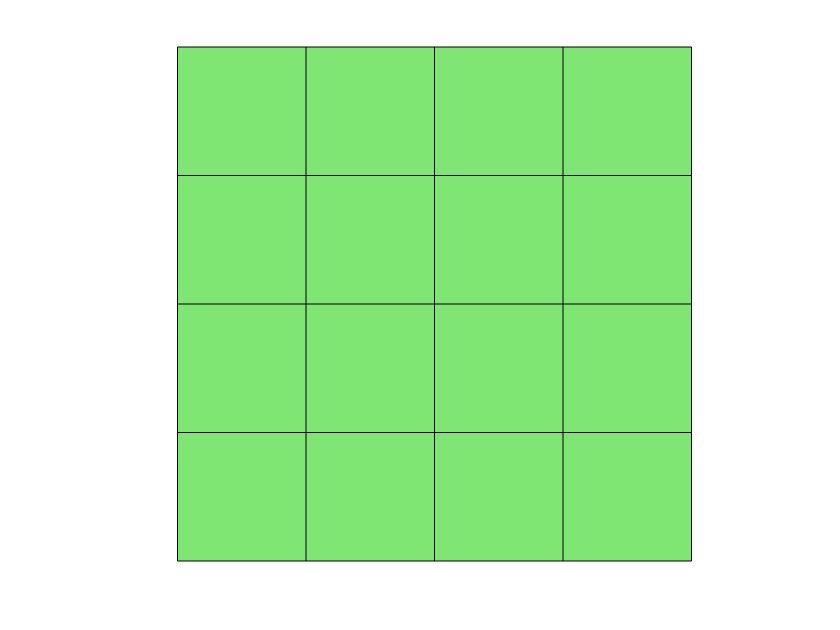}
\qquad
\includegraphics[height=0.29\hsize]{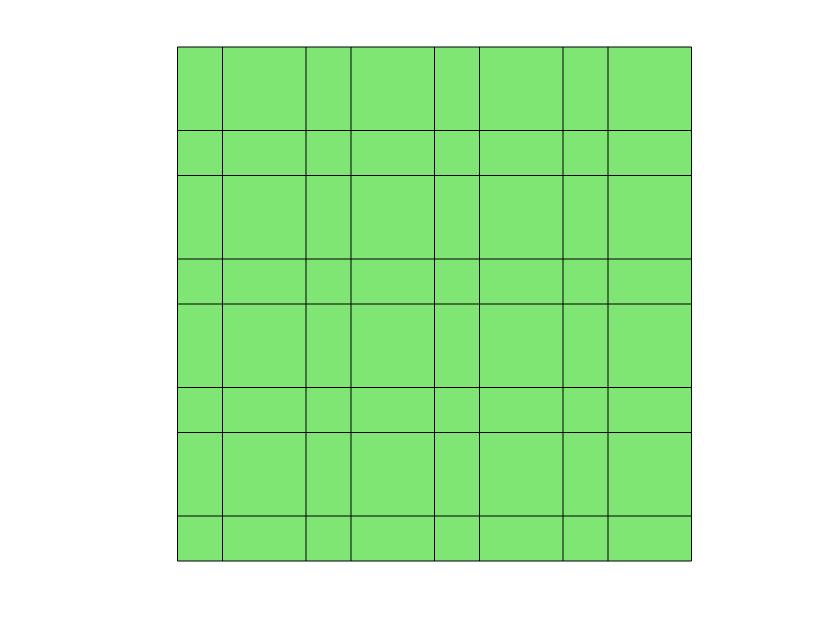}
\caption{To generate a sequence of non-uniform subdivisions, we generate firstly a sequence of uniform subdivisions, and refine each uniform subdivision by a same ratio to generate a non-uniform subdivision. See the illustration on square domains. }\label{fig:non-uniformgrid}
\end{figure}

\subsection{ Numerical examples for fourth order elliptic perturbation problem      }

\begin{ex}  \label{ex: perturb_ex_5}
Let $\Omega=(0,1)^{2}$. Take $u = \big(sin(\pi x)sin(\pi y)\big)^{2}, \beta(\boldsymbol{x}) = 8 + x - y$ and $f = \varepsilon^{2} \Delta(\beta(\boldsymbol{x}) \Delta u) - \Delta u$. Then $u$ is the solution of problem~\eqref{eq:model problem}. Apply \eqref{eq:discrete form RRM}  to get the discrete solution $u_{h}^{\rm{R}}$ on uniform or non-uniform meshes, and compute the relative energy error $\frac{\| u-u_{h}^{\rm{R}} \|_{\varepsilon,h}}{\| u \|_{\varepsilon,h}}$. From the experiment result of the non-uniform case in the left part of Figure~\ref{fig:table1}, the convergence rate is $\mathcal{O}(h)$ for $0<\varepsilon \leq1$. From the experiment result of the uniform case in the right part of  Figure~\ref{fig:table1}, the convergence rate is $\mathcal{O}(h)$ when $\varepsilon = \mathcal{O}(1)$ and $\mathcal{O}(h^{2})$ when $\varepsilon \ll 1 $. Both cases verify the theoretical findings in Theorem \ref{thm:errorRRM}. Figure \ref{fig:error_surface} shows the numerical solution and relative error in the surface.

\begin{figure}[!htbp]  
\centering
\includegraphics[scale=0.21]{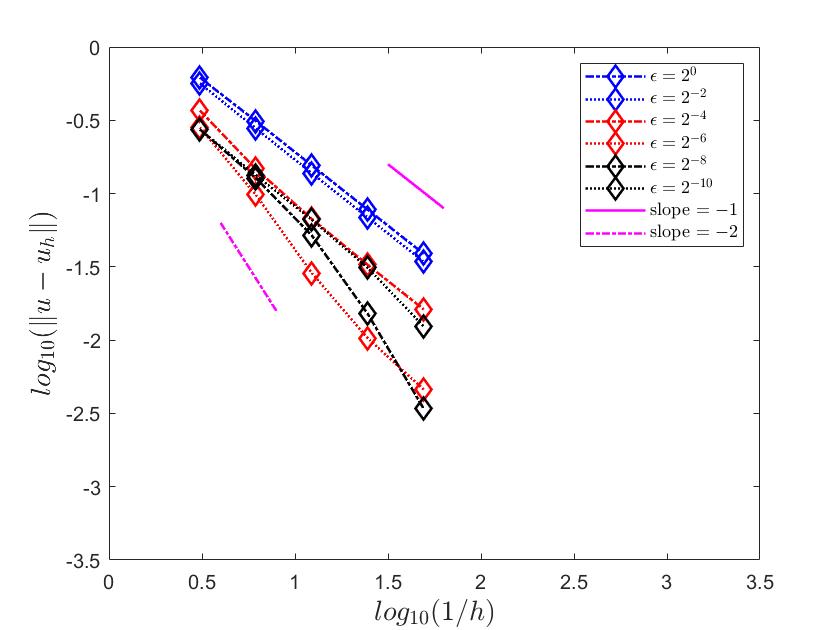}
\qquad
\includegraphics[scale=0.21]{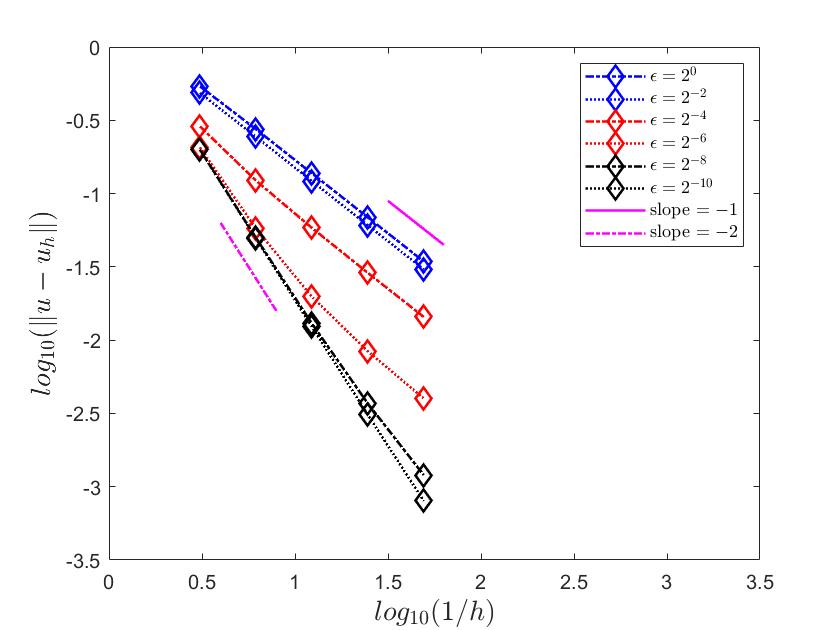}
\caption{Left:Relative error $\frac{\| u-u_{h}^{\rm{R}} \|_{\varepsilon,h}}{\| u \|_{\varepsilon,h}}$ on non-uniform grids in Example \ref{ex: perturb_ex_5}. Right:Relative error $\frac{\| u-u_{h}^{\rm{R}} \|_{\varepsilon,h}}{\| u \|_{\varepsilon,h}}$ on uniform grids in Example \ref{ex: perturb_ex_5}.} \label{fig:table1}
\end{figure}

\begin{figure}[!htbp] 
\centering
\includegraphics[scale=0.15]{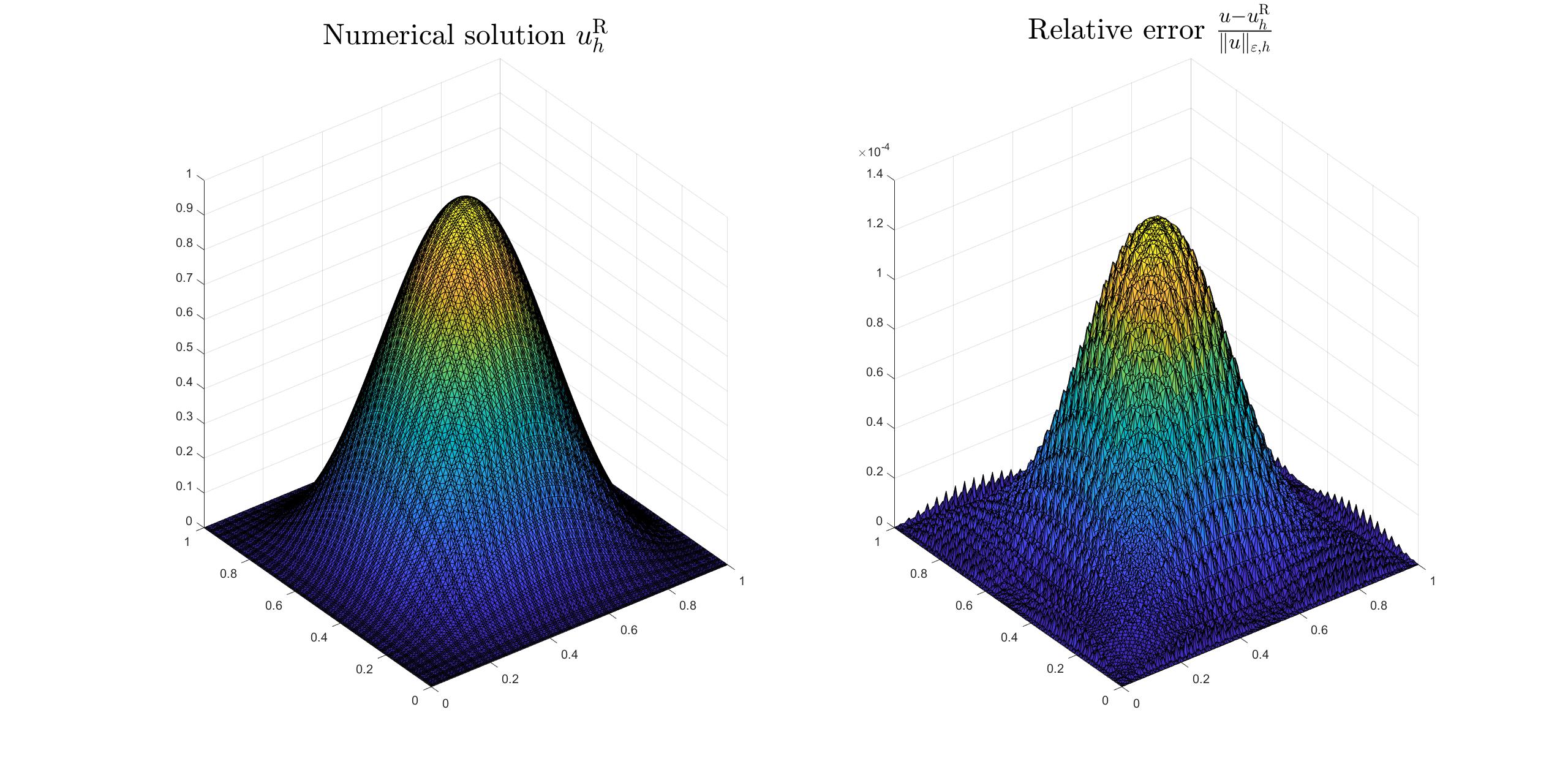}
\caption{Plot of the numerical solution on uniform mesh when  $h = 2^{-6}$ and $\epsilon = 2^{-2}$ }. \label{fig:error_surface}
\end{figure}

\end{ex} 

\begin{ex} \label{ex: perturb_ex_6}
Let $\Omega = (0,2)^{2}\backslash [1,2]^{2}$. Take the same $u, \beta$ and $f$ as in  Example \ref{ex: perturb_ex_5}. From Figure~\ref{fig:table3}, the convergence rates on the L-shaped domain are consistent with the results derived on $\Omega = (0,1)^{2}$.  It verifies the theoretical findings, especially the results in Theorem~\ref{thm:approxH02}, which shows that the interpolating properties are valid for non-convex domains. Figure \ref{fig:error_surface_2} shows the numerical solution and relative error in the  surface.  
\end{ex}
 
\begin{figure}[!htbp]   
\centering
\includegraphics[scale=0.2]{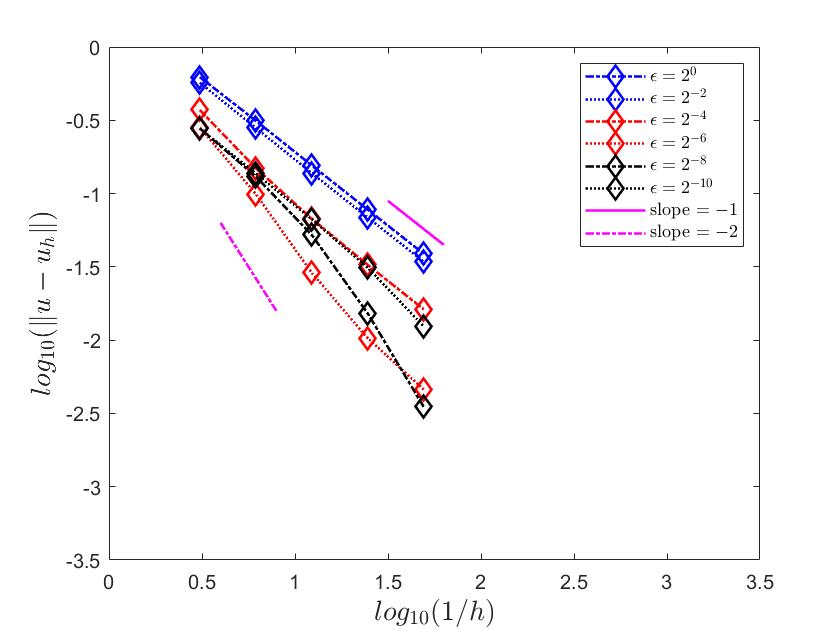}
\qquad
\includegraphics[scale=0.2]{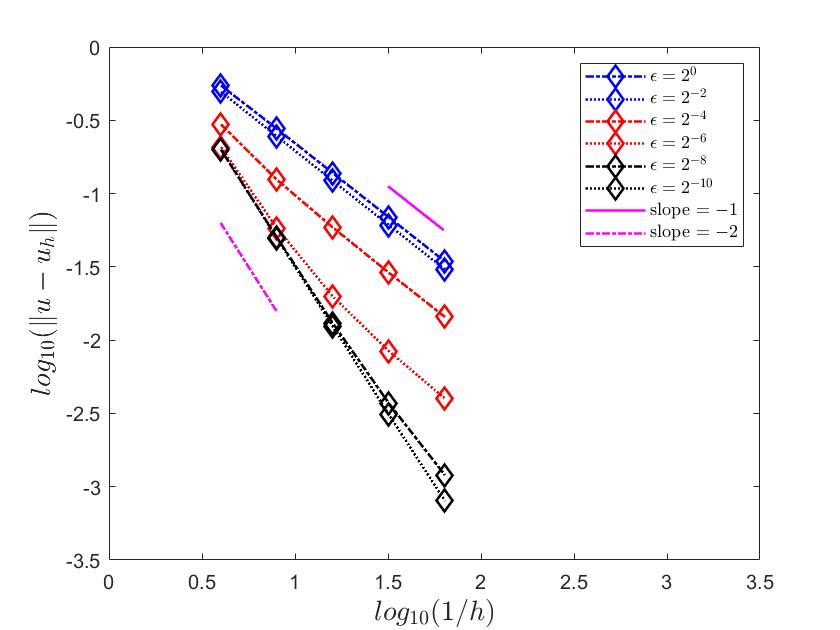}
\caption{Left:Relative error $\frac{\| u-u_{h}^{\rm{R}} \|_{\varepsilon,h}}{\| u \|_{\varepsilon,h}}$ on non-uniform grids in Example \ref{ex: perturb_ex_6}. Right:Relative error $\frac{\| u-u_{h}^{\rm{R}} \|_{\varepsilon,h}}{\| u \|_{\varepsilon,h}}$ on uniform grids in Example \ref{ex: perturb_ex_6}.} \label{fig:table3}
\end{figure}

\begin{figure}[!htbp] 
\centering
\includegraphics[scale=0.15]{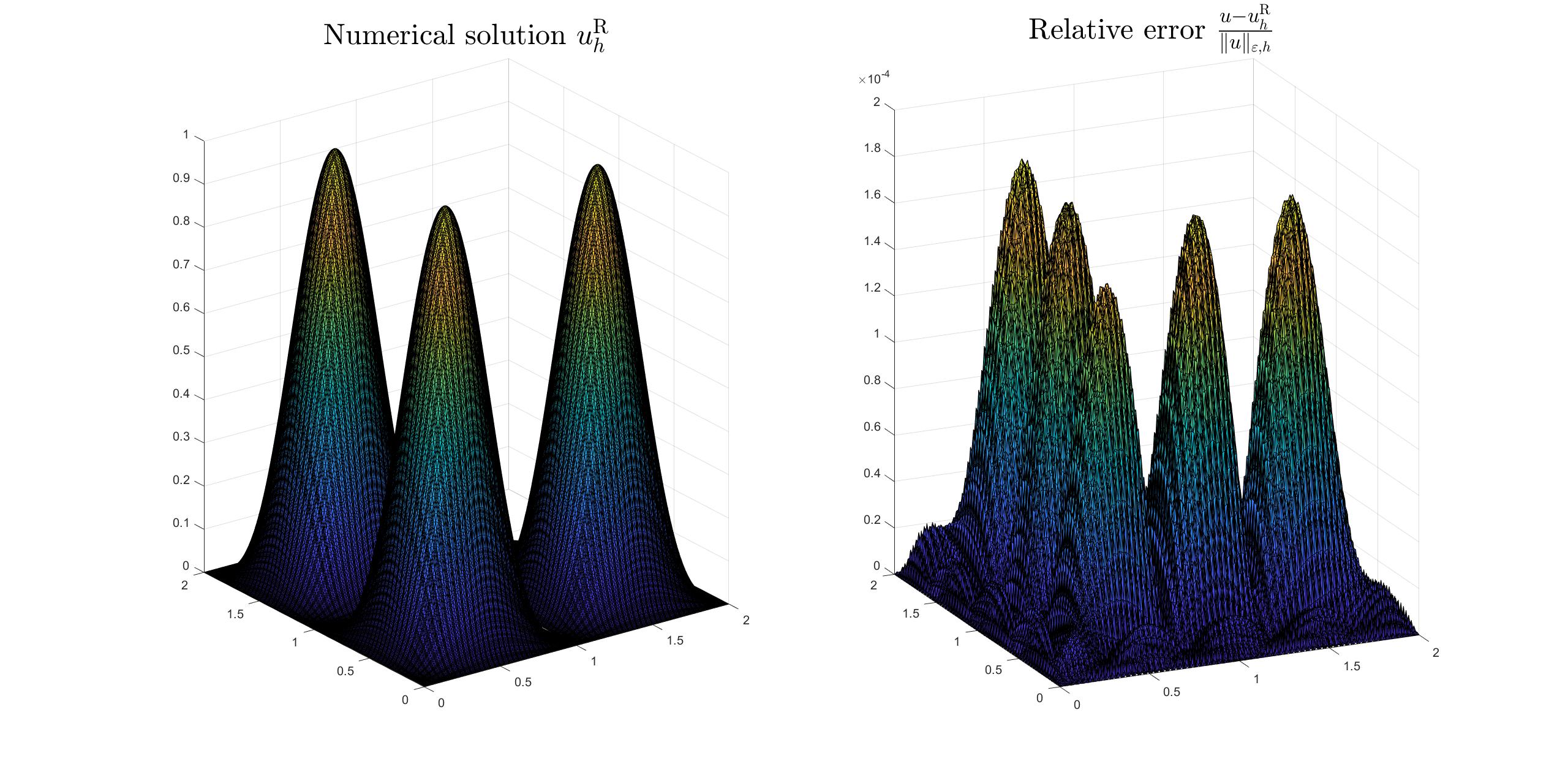}
\caption{Plot of the numerical solution on uniform mesh when  $h = 2^{-6}$ and $\epsilon = 2^{-2}$ }. \label{fig:error_surface_2}
\end{figure}

\begin{ex} \label{ex: perturb_ex_11}
	Let $\Omega=(0,1)^{2}$. Consider \eqref{eq:model problem} with $f = 2\pi^2sin(\pi x)sin(\pi y)$ and $\beta = 8 + x - y$.  The explict expression of $u$ is unknown, but the exact solution of \eqref{eq:2nd problem} reads $u^{0} = sin(\pi x)sin(\pi y)$. Here we take $\varepsilon$ to be small enough.  From Theorem~\ref{thm:uniform convergence}, $\| u^{0}-u_{h}^{\rm{R}} \|_{\varepsilon,h} \leq \| u^{0}-u \|_{\varepsilon,h} + \| u-u_{h}^{\rm{R}} \|_{\varepsilon,h}$, so the convergence rate of the   error  is $\mathcal{O}(h^{\frac{1}{2}})$ when the mesh is relatively coarse, which is shown in Figure~\ref{fig:table5}. Figure \ref{fig:error_surface_3} shows the numerical solution and relative error in the surface.  

\end{ex}

\begin{figure}[!htbp]    
\centering
\includegraphics[scale=0.2]{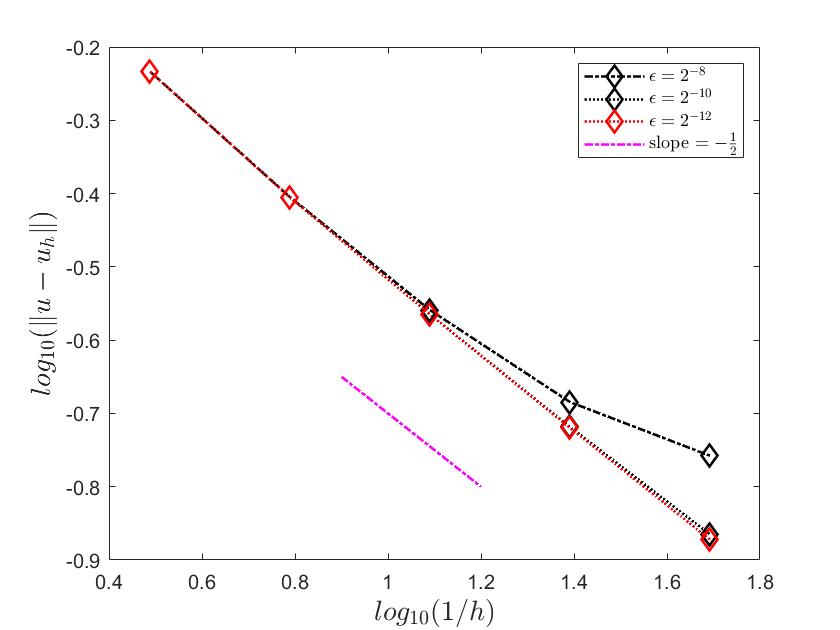}
\qquad
\includegraphics[scale=0.2]{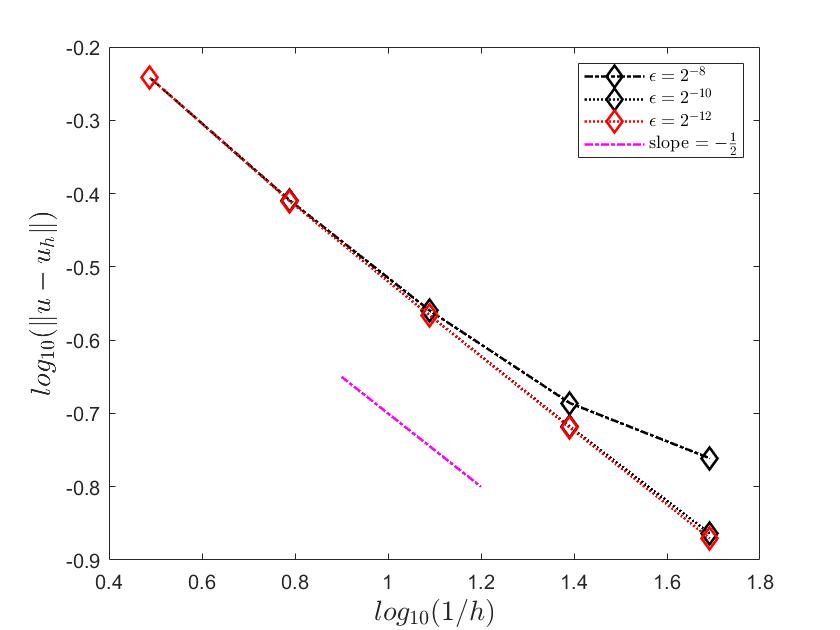}
\caption{Left:Relative error $\frac{\| u^0-u_{h}^{\rm{R}} \|_{\varepsilon,h}}{\| u^0 \|_{\varepsilon,h}}$ on non-uniform grids in Example \ref{ex: perturb_ex_11}. Right:Relative error $\frac{\| u^0-u_{h}^{\rm{R}} \|_{\varepsilon,h}}{\| u^0 \|_{\varepsilon,h}}$ on uniform grids in Example \ref{ex: perturb_ex_11}. Both figures illustrate that $\| u^0-u_{h}^{\rm{R}} \|_{\varepsilon,h}$ decays in order $\mathcal{O}(h^{-1/2})$ when the mesh is relatively coarse.} \label{fig:table5}
\end{figure}

\begin{figure}[!htbp]  
\centering
\includegraphics[scale=0.15]{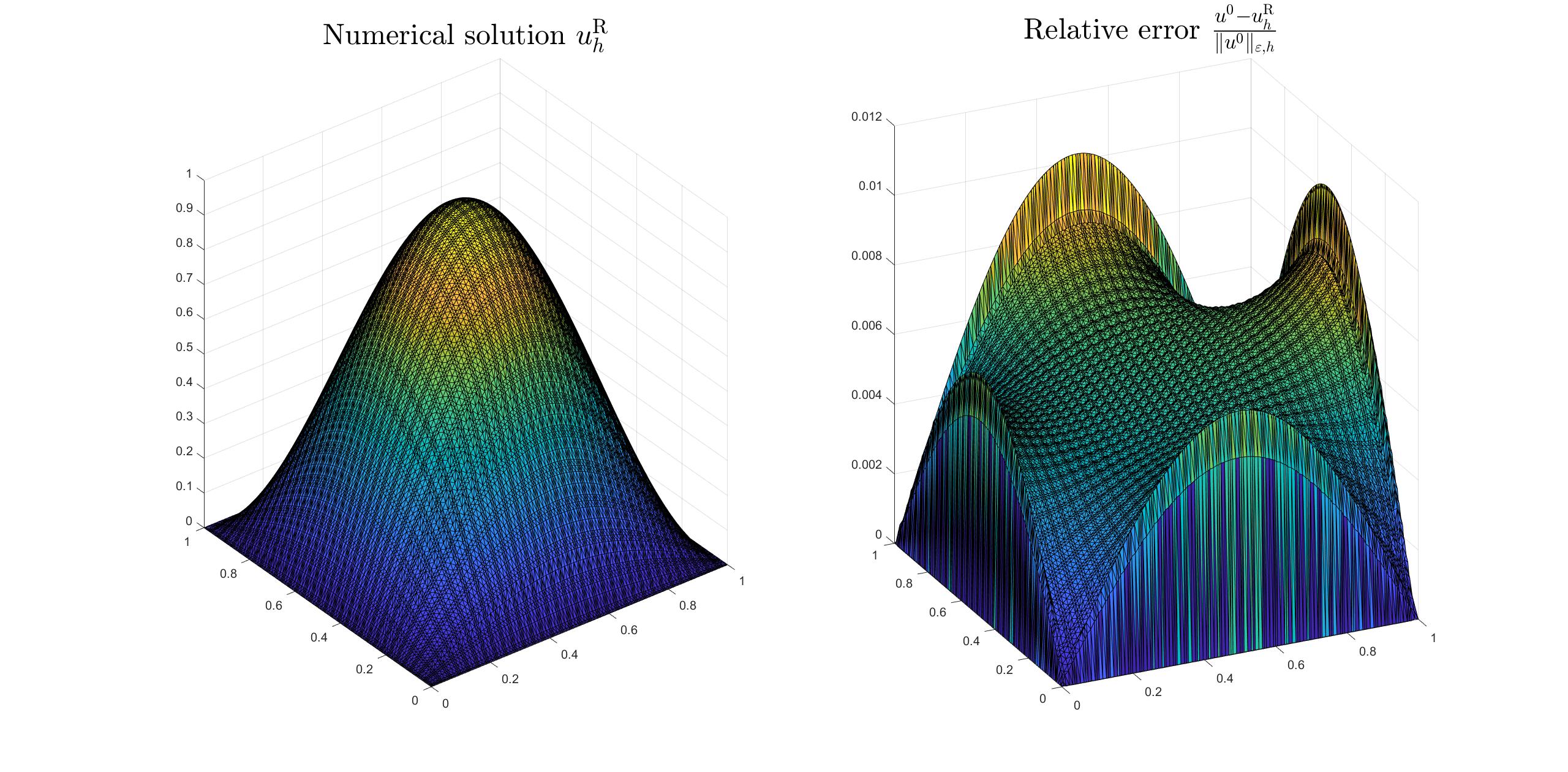}
\caption{Plot of the numerical solution on uniform mesh when  $h = 2^{-6}$ and $\epsilon = 2^{-10}$ }. \label{fig:error_surface_3}
\end{figure}

\subsection{Numerical examples for Helmholtz transmission eigenvalue problem}
Here we focus on the case $\beta(\boldsymbol{x})> 1$ which is of dominant interest in practice \cite{colton1998inverse}. For $0<\beta(\boldsymbol{x})<1$, it can be treated similarly. We refine the mesh uniformly for all examples. For each series of meshes, we show the lowest six eigenvalues $\{\lambda_{h_k}\}_{k = 1}^6$($\lambda_h = \sqrt{\tau_h}$). The convergent orders are computed by 
$$
log_2(|\frac{\lambda_{h_l} - \lambda_{h_{l+1}}}{\lambda_{h_{l+1}} - \lambda_{h_{l+2}} }|).
$$

\begin{ex} \label{ex: Helmholtz_trans_2}
Consider the model problem \eqref{eq: Helmholtz_transmission_model} on square domain $\Omega = (0,1)^2$ with $\beta(\boldsymbol{x}) = 8 + x - y$. The numerical results are showed in Table \ref{table: Helmholtz_trans_2}. On each mesh level we show the first six real eigenvalues and the convergence rate  is $\mathcal{O}(h^2)$. The result verifies the theoretical findings in  theorem  \ref{thm: convergence rate of Helm_transmission}.
\end{ex}

\begin{table}[!htbp]
	\renewcommand{\tablename}{Table}
	\centering
	\caption{The performance of $V_{h0}^{\rm R}$ in Example \ref{ex: Helmholtz_trans_2}}
	\label{table: Helmholtz_trans_2}
     \begin{tabular}{llllllll}
		 $\lambda \backslash h$ & $2^{-5}$ & $2^{-6}$ & $2^{-7}$ & $2^{-8}$& $2^{-9}$  & \text{Trend}& \text { Rate }\\
		\midrule
		$\lambda_1$ & 2.825272 & 2.822959 & 2.822382 & 2.822375 & 2.822201 &$\searrow$ & 2.00 \\
		$\lambda_2$ & 3.547819 & 3.540972 & 3.539265 & 3.538839 & 3.538732 &$\searrow$ & 2.00 \\
		$\lambda_3$ & 3.548079 & 3.541258 & 3.539558 & 3.539133 & 3.539027 &$\searrow$ & 2.00 \\
		$\lambda_4$ & 4.122230 & 4.118872 & 4.118025 & 4.117813 & 4.117760 &$\searrow$ & 1.99 \\
		$\lambda_5$ & 4.527683 & 4.508192 & 4.503343 & 4.502132 & 4.501830 &$\searrow$ & 2.00\\
		$\lambda_6$ & 5.003727 & 4.992792 & 4.990053 & 4.989368 & 4.989196 &$\searrow$ & 2.00 \\
	\end{tabular}
\end{table}

\begin{ex} \label{ex: Helmholtz_trans_4}
Consider the model problem \eqref{eq: Helmholtz_transmission_model} on square domain $\Omega = (0,2)^2\backslash(1,2)^2$ with $\beta(\boldsymbol{x}) = 8 + x - y$. The numerical results are showed in Table \ref{table: Helmholtz_trans_4}. On each mesh level we show the lowest six real eigenvalues and the convergence rate  is $\mathcal{O}(h^2)$. The result verifies the theoretical findings in  theorem  \ref{thm: convergence rate of Helm_transmission}.
\end{ex}

\begin{table}[!htbp]
	\centering 
	\renewcommand{\tablename}{Table}
	\caption{The performance of $V_{h0}^{\rm R}$ in Example \ref{ex: Helmholtz_trans_4}}
	\label{table: Helmholtz_trans_4}
	 \begin{tabular}{llllllll}
		 $\lambda \backslash h$ & $2^{-4}$ & $2^{-5}$ & $2^{-6}$ & $2^{-7}$& $2^{-8}$  & \text{Trend}& \text { Rate }\\
		\midrule
		$\lambda_1$ & 2.186708 & 2.182840 & 2.181880 & 2.181643 & 2.181584 &$\searrow$ & 2.01 \\
		$\lambda_2$ & 2.305447 & 2.294871 & 2.292233 & 2.291574 & 2.291409 &$\searrow$ & 2.00 \\
		$\lambda_3$ & 2.570287 & 2.562290 & 2.560272 & 2.255977 & 2.559640 &$\searrow$ & 1.99 \\
		$\lambda_4$ & 2.719306 & 2.711826 & 2.709925 & 2.709450 & 2.709331 &$\searrow$ & 1.99 \\
		$\lambda_5$ & 2.993792 & 2.977453 & 2.973325 & 2.977229 & 2.972031 &$\searrow$ & 1.99\\
		$\lambda_6$ & 3.195727 & 3.166438 & 3.158732 & 3.156780 & 3.156289 &$\searrow$ & 1.97 \\
	\end{tabular}
\end{table}

\appendix 

\section{Some technical proofs}

\subsection{Proofs of Proposition \ref{prop:scaling} and Proposition \ref{pro:property of phi 5x5} in Section \ref{sec:rrmrv}}
\label{sec:proofs3.1}

\subsubsection{\bf Proof of Proposition \ref{prop:scaling}}
\label{sec:pfprop:scaling}
For $T\subset \mathcal{M}_{K}$, $\varphi_{K}|_{T}\in P_{2}(T)$ and it can be written as
\begin{equation}\label{eq:RRMbasisOnK}
\varphi_{K}|_{T} = \sum_{s=1:4}b_{s}p_{s}^{\rm{M}} + \sum_{t=5:8}b_{t}q_{t}^{\rm{M}},
\end{equation}
where $p_{s}^{\rm{M}}$ and $q_{t}^{\rm{M}}$ represent the rectangular Morley basis functions related to nodes and edges of $T$, respectively. 
It is known that
\begin{equation}\label{eq:normRMbasis}
\big|p_{s}^{\rm{M}}\big|_{k,T} \leq C_{\gamma_{0}}h_{T}^{1-k} \quad \mbox{and} \quad  \big|q_{t}^{\rm{M}}\big|_{k,T} \leq C_{\gamma_{0}}h_{T} h_{T}^{1-k}.
\end{equation}
From \eqref{eq:values1Basis}--\eqref{eq:values3Basis}, there exists a constant $C_{\gamma_{0}}$, such that for $s, \, t = 1:4$,
\begin{equation}\label{eq:valuesOnK}
b_{s} \leq C_{\gamma_{0}} \ \mbox{ and }\ b_{t} \leq C_{\gamma_{0}}h_{T}^{-1}.
\end{equation}
A combination of \eqref{eq:RRMbasisOnK}, \eqref{eq:normRMbasis}, and \eqref{eq:valuesOnK} leads to the desired result.\qed

\subsubsection{\bf Proof of Proposition \ref{pro:property of phi 5x5}}
\label{sec:pfpro:property of phi 5x5}
$(a)$
Consider the first equality in $(a)$. The function on its left-hand-side is a sum of $P_{2}$ polynomials restricted on $K$, and the function on the right-hand-side is a bilinear polynomial. Utilizing \eqref{eq:values1Basis}, \eqref{eq:values2Basis}, \eqref{eq:values3Basis}, and $\varphi_{T}(X_{1,1}^{T}) = \frac{L_{T,-1}}{L_{T,-1}+L_{T}}\cdot \frac{H_{T,-1}}{H_{T,-1}+H_{T}}$, for any $T\in A_{K}$, it is calculated directly that, for the left-hand-side and the right-hand-side functions, their values on the vertices of $K$ and normal derivatives on the midpoints of edges on $\partial K$ are equal. Therefore, the first equality in $(a)$ is valid. 

The second equality is obtained by $v(x_{T},y_{T}) = \fint_{T}v \ud x d y$ for any $v\in Q_{1}(\mathcal{M}_{K})$.
 
\noindent$(b)$
For $v = x^{2}$, direct calculation leads to, \begin{align*}
& \sum_{T\in A_{K}}  v(c_{T})\,\varphi_{T}(x,y) = x^{2} + \frac{1}{4}\sum_{T\in A_{K}} L_{T}^{2}\,\varphi_{T}(x,y), \quad \forall (x,y)\in K; \\
& \sum_{T\in A_{K}} (\fint_{T}v{\,d x d y }) \,\varphi_{T}(x,y)  = x^{2} + \frac{1}{3}\sum_{T\in A_{K}} L_{T}^{2}\,\varphi_{T}(x,y), \quad \forall (x,y)\in K.
\end{align*}
For $v = y^{2}$, we have similarly,
\begin{align*}
& \sum_{T\in A_{K}} v(c_{T})\,\varphi_{T}(x,y) = y^{2} + \frac{1}{4}\sum_{T\in A_{K}} H_{T}^{2}\,\varphi_{T}(x,y), \quad \forall (x,y)\in K; \\
& \sum_{T\in A_{K}} (\fint_{T}v{\,d x d y })\, \varphi_{T}(x,y)  = y^{2} + \frac{1}{3}\sum_{T\in A_{K}} H_{T}^{2}\,\varphi_{T}(x,y), \quad \forall (x,y)\in K.
\end{align*}
Therefore, for any function $v \in P_{2}(\mathcal{M}_{K})$ and $ (x,y)\in K$, it holds that, 
\begin{align*}
&  \sum_{T\in A_{K}} r_{T}(v)\,\varphi_{T}(x,y) = v(x,y), \ \mbox{ where }\ r_{T}(v) = v(c_{T}) - \tfrac{1}{8}\big(L_{T}^{2}\,\tfrac{\partial^{2}v}{\partial x^{2}} + H_{T}^{2}\,\tfrac{\partial^{2}v}{\partial y^{2}} \big);
\\
& \sum_{T\in A_{K}} t_{T}(v)\,\varphi_{T}(x,y) = v(x,y), \ \mbox{ where }\  t_{T}(v) = \fint_{T}v{\,d x d y} - \tfrac{1}{6}\big(L_{T}^{2}\,\tfrac{\partial^{2}v}{\partial x^{2}}  + H_{T}^{2}\,\tfrac{\partial^{2}v}{\partial y^{2}} \big).
\end{align*}

\noindent$(c)$
Consider the function on the left-hand-side of this equality. Direct calculation leads to that, the function values on the vertices of $K$ and normal derivatives on the midpoints of edges on $\partial K$ equals to zero. Therefore, $\sum_{T\in A_{K}} (d_{T}L_{T}H_{T})\,\varphi_{T}$ is a zero function restricted on $K$. \qed


%
%
\subsection{Proof of Lemma \ref{lemma:ccc} in Section \ref{sec:aied}}
\label{sec:pflm5}
$(a)$
By Lemma~\ref{lem:property of phi} and the difference theory, we replace the second derivatives appearing in the expression of $t_{K}(v)$ with a weighted sum of five integral mean values around $K$, where the weights are computed to be $\big\{w_{\mu}^{K}\big\}_{\mu = 1:5}$. That is to say,  for $v\in L^1(\widetilde{\Omega}_h)$ such that $v|_{\Delta_T}\in P_{2}(\Delta_T)$, it holds that $\lambda_{K}(v) = t_{K}(v)$. Therefore, $(\widetilde{\Pi}_{h} v)|_{T} = v|_{T}$ for any $T \in \mathcal{G}_{h}$.

\noindent$(b)$
From the assumption of local quasi-uniformity in \eqref{eq:regularity}, we can conclude that all cells in $\Delta_{T}$ are of comparable size. Utilizing ~\eqref{eq:normEstimate}, we have $\max_{ \substack{ \mathcal{M}_{K}\supset T \\ K\in \mathcal{J}_{h}} } |\varphi_{K} |_{k,T} \lesssim h_{T}^{1-k}$, where the hidden constant depends only  on~$\gamma_{0}$.

\begin{align*}
|\widetilde{\Pi}_{h}v|_{k,T}^{2} & = \big|\sum_{ \substack{ \mathcal{M}_{K}\supset T \\ K\in \mathcal{J}_{h}} } \lambda_{K}(v)\,\varphi_{K}(x,y)\big|_{k,T}^{2} 
 \lesssim \sum_{ \substack{ \mathcal{M}_{K}\supset T \\ K\in \mathcal{J}_{h}} } |\lambda_{K}(v)|^{2} | \,\varphi_{K}(x,y)|_{k,T}^{2} \\ 
& \leq \max_{ \substack{ \mathcal{M}_{K}\supset T \\ K\in \mathcal{J}_{h}} } |\varphi_{K}(x,y)|_{k,T}^{2} \sum_{ \substack{ \mathcal{M}_{K}\supset T \\ K\in \mathcal{J}_{h}} } |\lambda_{K}(v)|^{2} 
\lesssim h_{T}^{2-2k} \sum_{ \substack{ \mathcal{M}_{K}\supset T \\ K\in \mathcal{J}_{h}} }\Big( \sum\limits_{\mu = 1}^{5}w_{\mu}^{K}\fint_{S_{\mu}^{K}}v{\,d x d y} \Big)^{2} \\
& \lesssim h_{T}^{2-2k} \sum_{ \substack{ \mathcal{M}_{K}\supset T \\ K\in \mathcal{J}_{h}} } \sum\limits_{\mu = 1}^{5}\big(\fint_{S_{\mu}^{K}}v{\,d x d y} \big)^{2}
 \lesssim h_{T}^{2-2k} \sum_{ \substack{ \mathcal{M}_{K}\supset T \\ K \in \mathcal{J}_{h}} } \sum\limits_{\mu = 1}^{5}\frac{1}{|S_{\mu}^{K}|}\int_{S_{\mu}^{K}}v^{2} {\,d x d y}  \\
& \lesssim h_{T}^{-2k} \|v\|_{0,\Delta_{T}}^{2}.
\end{align*}
The proof is thus completed. 

\noindent$(c)$
For any polynomial $p \in P_{2}(\widetilde{\Omega}_{h})$, we have by Lemma \ref{lem:preserveP2}(a) and Lemma \ref{lem:stability}(b) that 
\begin{align*}
|v-\widetilde{\Pi}_{h}v|_{k,T} & \leq |v-p|_{k,T} + |\widetilde{\Pi}_{h}(p-v)|_{k,T} \\
& \lesssim |v-p|_{k,T} + h_{T}^{-k}\|v-p\|_{0,\Delta_{T}} 
\end{align*}

Since $\Delta_{T}$ is a finite union of rectangles, each of which is star-shaped ensured by \eqref{eq:regularity}, we can apply the Bramble-Hilbert lemma in the form presented in \cite{Dupont;Scott1980,Scott;Zhang1990} and obtain 
\begin{align}\label{eq:bestP2}
\inf_{p\in P_{2}}|v-p|_{k,\Delta_{T}} \lesssim h_{T}^{s-k}|v|_{s,\Delta_{T}}, \mbox{ with }  0\leq k \leq s \leq 3,
\end{align}
where the hidden constant is only dependent on $\gamma_{0}$.
Therefore, we derive
\begin{align*}
|v-\widetilde{\Pi}_{h}v|_{k,T}\lesssim h_{T}^{s-k}|v|_{s,\Delta_{T}}\mbox{ with } 0\leq k \leq s \leq 3.
\end{align*}
The proof is completed. \qed

\subsection{Proofs of Theorem \ref{thm:errorRRM}, Lemma \ref{lem:regularity} and Theorem \ref{thm:uniform convergence} in Section \ref{sec:rrmep}} 
\label{sec:pf4ep}

\subsubsection{\bf Proof of Theorem \ref{thm:errorRRM}} 
By the Strang's lemma~\cite{Ciarlet1978}, we have
\begin{align}\label{eq:StrangRRM}
\| u-u_{h}^{\rm R} \|_{\varepsilon,h} \lesssim \inf_{v_{h}\in V_{h0}^{\rm{R}}} \| u - v_{h} \|_{\varepsilon,h} + \sup_{w_{h}\in V_{h0}^{\rm{R}},w_{h} \ne 0}  \frac{E_{\varepsilon,h}(u,w_{h})}{\| w_{h}\|_{\varepsilon,h} },
\end{align}
where $E_{\varepsilon,h}(u,w_{h}) = \varepsilon^{2} a_{h}(u,w_{h}) + b_{h}(u,w_{h}) -(f,w_{h})$.

Consider the first term on the right-hand-side of~\eqref{eq:StrangRRM}, i.e., the approximation error. By Theorem~\ref{thm:approxH02}, the following two estimates are valid

\begin{equation}\label{lem:approx in energy norm}
\begin{split}
\inf_{v_{h}\in V_{h0}^{\rm{R}}} \| u - v_{h} \|_{\varepsilon,h} \leq \| u - \Pi_{h0}u \|_{\varepsilon,h} &\lesssim 
 |u - \Pi_{h0}u |_{1,h} + \varepsilon |u - \Pi_{h0}u |_{2,h}
\lesssim 
\left\{
\begin{array}{l}
h|u|_{2,\Omega} +\varepsilon h |u|_{3,\Omega}, \\
h^{2}|u|_{3,\Omega}+\varepsilon h |u|_{3,\Omega}.
\end{array}
\right.
\end{split}
\end{equation}

Consider the second term on the right-hand-side of~\eqref{eq:StrangRRM}, i.e., the consistency error. Let $\Pi_{h0}^{\rm{b}}$ be the nodal interpolation operator associated with the bilinear element, then
\begin{align*}
E_{\varepsilon,h}(u,w_{h}) & =  \varepsilon^{2}a_{h}(u,w_{h}) + b_{h}(u,w_{h}) - (f,w_{h}) \\
& = \varepsilon^{2} \sum_{K\in \mathcal{G}_{h}} \int_{K} \beta \Delta u \Delta w_{h}  + \sum_{K\in \mathcal{G}_{h}} \int_{K} \nabla u \cdot \nabla w_{h}  - \int_{\Omega}\left(\varepsilon^{2} \Delta(\beta \Delta u) - \Delta u\right)w_{h} \\
& = \varepsilon^{2} \sum_{K \in \mathcal{G}_h} \int_K \left( \beta(\boldsymbol{x})\Delta u\Delta w_h - \Delta(\beta(\boldsymbol{x})\Delta u)\Pi_{h0}^{\rm{b}} w_h \right) + \sum_{K\in \mathcal{G}_{h}} \int_{K} \left( \nabla u \cdot \nabla w_{h} + \Delta u w_{h}\right)\\
&  \quad + \varepsilon^{2}\int_{\Omega} \Delta(\beta(\boldsymbol{x})\Delta u)  \ (\Pi_{h0}^{\rm{b}} w_{h} - w_{h}) := R_{1} + R_{2} +R_{3}.
\end{align*}

Notice that $\Pi_{h0}^{\rm{b}} w_h\in H_0^1(\Omega)$, by Green's formula, 
\begin{align} \label{eq: bi_harm_proof_consist}
 R_{1} &= \sum_{K \in \mathcal{G}_h} \int_K \nabla(\beta(\boldsymbol{x})\Delta u)\cdot (\nabla w_h - \nabla \Pi_{h0}^{\rm{b}} w_h) -  \sum_{K \in \mathcal{G}_h} \int_{\partial K} \beta(\boldsymbol{x})\Delta u \frac{\partial w_h}{\partial \boldsymbol{n}}  \\
	& := T_1 + T_2 \nonumber 
\end{align}
By the Cauchy-Schwarz inequality and the approximation property of the interpolation operator $\Pi_{h0}^{\rm{b}}$,
\begin{equation} \label{eq: R_1_bi}
|T_1|\lesssim (|u|_{2,\Omega} + |u|_{3, \Omega})h|w_h|_{2,h}.
\end{equation}
\begin{equation}\label{eq: f_bi}
(f,  \Pi_{h0}^{\rm{b}} w_{h} - w_h) \lesssim h^2\|f\|_{0,\Omega}|w_h|_{2,h}
\end{equation}
Notice that the jumps of the mean values, over an interelement face, of the first order
derivatives of $w_h$ are all zero, and their mean values over a free face are zero, by a standard estimate in (e.g.,~\cite[Theorem 5.4.1]{Wang.M;Shi.Z2013mono} and \cite[(4.6)]{Nilssen;Tai;Winther2001}), we have
\begin{equation} \label{eq: R_2_bi}
|T_2| \lesssim h|u|_{3,\Omega}|w_h|_{2,h} .
\end{equation}
Therefore, combining \eqref{eq: R_1_bi}, \eqref{eq: f_bi} and \eqref{eq: R_2_bi}  we obtain that
\begin{equation} \label{eq: perturb_R1}
R_{1} \lesssim \varepsilon^{2} h (|u|_{2,\Omega} + |u|_{3,\Omega}) |w_h|_{2,h} \lesssim  \varepsilon h (|u|_{2,\Omega} + |u|_{3,\Omega} )\| w_{h}\|_{\varepsilon,h}
\end{equation}
where we utilize $\varepsilon|w_{h}|_{2,h} \leqslant \| w_{h} \|_{\varepsilon,h} $.

By $V_{h0}^{\rm{R}} \subset V_{hs}^{\rm{M}}$, $H^{2}_{0}(\Omega) \subset H^{1}_{0}(\Omega)$, and $|w_{h}|_{1,h}\lesssim \| w_{h}\|_{\varepsilon,h}$, we have,  by Lemma \ref{lem:consisRM}, that
\begin{equation} \label{eq: perturb_R2}
R_{2} \lesssim h|u|_{2,\Omega} |w_{h}|_{1,h} \lesssim h|u|_{2,\Omega}\| w_{h}\|_{\varepsilon,h}.
\end{equation}
Specially, if the mesh is uniform, then 
$
R_{2} \lesssim h^{2}|u|_{3,\Omega}\| w_{h}\|_{\varepsilon,h}.
$

For $R_{3}$, it holds that 
$$
R_{3} \lesssim \varepsilon^{2} h  \|\Delta(\beta(\boldsymbol{x})\Delta u)\|_{0,\Omega} |w_{h}|_{1,h}
\lesssim \varepsilon^2 h  (|u|_{2,\Omega} + |u|_{3,\Omega}+ \|\Delta^{2}u\|_{0,\Omega})\| w_{h}\|_{\varepsilon,h}.
$$
A combination of the estimates of terms $R_{1}$, $R_{2}$, and $R_{3}$ leads to
\begin{equation}\label{eq:consis nonuniform}
\sup_{w_{h}\in V_{h0}^{\rm{R}},w_{h} \ne 0}  \frac{E_{\varepsilon,h}(u,w_{h})}{\| w_{h}\|_{\varepsilon,h} } 
\lesssim h |u|_{2,\Omega}  + \varepsilon h (|u|_{2,\Omega} + |u|_{3,\Omega}) + \varepsilon^{2} h   (|u|_{2,\Omega} + |u|_{3,\Omega}+ \|\Delta^{2}u\|_{0,\Omega})
\end{equation}
and the following estimate on uniform subdivisions
\begin{equation}\label{eq:consis uniform}
\sup_{w_{h}\in V_{h0}^{\rm{R}},w_{h} \ne 0}  \frac{E_{\varepsilon,h}(u,w_{h})}{\| w_{h}\|_{\varepsilon,h} } 
\lesssim h^{2} |u|_{3,\Omega}  + \varepsilon h (|u|_{2,\Omega} + |u|_{3,\Omega})  + \varepsilon^{2} h   (|u|_{2,\Omega} + |u|_{3,\Omega}+ \|\Delta^{2}u\|_{0,\Omega}).
\end{equation}
 By \eqref{eq:StrangRRM}, \eqref{lem:approx in energy norm}, \eqref{eq:consis nonuniform}, and \eqref{eq:consis uniform}, the proof can be completed imediately.\qed
\subsubsection{\bf Proof of Lemma \ref{lem:regularity}}

Similar to the proof of \!\cite[Lemma~5.1]{Nilssen;Tai;Winther2001}, the lemma holds. By the regularity theory of second order elliptic equation, 
\begin{equation} \label{eq:regularity_second}
\left\|u^0\right\|_2 \leq c\|f\|_0
\end{equation}
and, since $\Delta(\beta(\boldsymbol{x})\Delta u)=\varepsilon^{-2} \Delta\left(u-u^0\right)$, it is a consequence of the regularity theory of fourth order elliptic equation that 
\begin{equation} \label{eq:regularity_fourth}
\|u\|_3 \leq c \varepsilon^{-2}\left\|\Delta\left(u-u^0\right)\right\|_{-1} \leq c \varepsilon^{-2}\left|\left(u-u^0\right)\right|_1.
\end{equation}

Furthermore, from the weak formulations of the problems \eqref{eq:model problem} and \eqref{eq:2nd problem}, and the fact that $u \in H_0^2 \cap H^3$, we derive that
$$
\varepsilon^2(\beta\Delta u, \Delta v)+\left(D\left(u-u^0\right), D v\right)=\varepsilon^2 \int_{\partial \Omega}(\beta \Delta u) \frac{\partial v}{\partial n} d s
$$
for all $v \in H_0^1 \cap H^2$. In particular, by choosing $v=u-u^0$ and the boundedness of $\beta$, we obtain
\begin{equation} \label{proof:lemma11_1}
\varepsilon^2\|\Delta u\|_0^2+\left|u-u^0\right|_1^2 \leq \varepsilon^2 \int_{\partial \Omega}|\Delta u \frac{\partial u^0}{\partial n}| d s + \varepsilon^2\int_{\Omega} |\Delta u \cdot f| d x .
\end{equation}
However,
\begin{equation} \label{proof:lemma11_2}
\varepsilon^2\int_{\Omega} |\Delta u \cdot f| d x \leq \frac{\varepsilon^2}{2}\left(\|\Delta u\|_0^2+\|f\|_0^2\right)
\end{equation}
Furthermore, standard trace inequalities and \eqref{eq:regularity_second} imply
$$
\int_{\partial \Omega}\left|\frac{\partial u^0}{\partial n}\right|^2 d s \leq c\|f\|_0^2
$$
and
$$
\int_{\partial \Omega}|\Delta u|^2 d s \leq c\|\Delta u\|_0\|\Delta u\|_1
$$
Hence, from the arithmetic geometric mean inequality we obtain that for any $\delta>0$ there is a constant $c_\delta$ such that
\begin{equation} \label{proof:lemma11_3}
\varepsilon^2\left|\int_{\partial \Omega}(\Delta u) \frac{\partial u^0}{\partial n} d s\right| \leq c_\delta \varepsilon\|f\|_0^2+\delta \varepsilon^3\|\Delta u\|_0\|u\|_3 .
\end{equation}
However, from \eqref{eq:regularity_fourth} we derive
\begin{equation} \label{proof:lemma11_4}
\begin{aligned}
\varepsilon^3\|\Delta u\|_0\|u\|_3 & \leq \frac{1}{2}\left(\left\|\varepsilon^2 \Delta u\right\|_0^2+\varepsilon^4\|u\|_3^2\right) \\
& \leq \frac{1}{2} \varepsilon^2\|\Delta u\|_0^2+c\left|u-u^0\right|_1^2 .
\end{aligned}
\end{equation}
The inequalities \eqref{proof:lemma11_1}-\eqref{proof:lemma11_4} lead to the bound
$$
\varepsilon^2\|\Delta u\|_0^2+\left|u-u^0\right|_1^2 \leq c \varepsilon\|f\|_0^2,
$$
and together with \eqref{eq:regularity_fourth} this implies the desired estimates.\qed

\subsubsection{\bf Proof of Theorem \ref{thm:uniform convergence}} 
\label{sec:pfuc} 
From Theorem~\ref{thm:approxH02}, we have
$\big|u-\Pi_{h0}u\big|_{2,h}^{2} \lesssim |u|_{2,\Omega}\big|u-\Pi_{h0}u\big|_{2,h}  \lesssim h |u|_{2,\Omega} |u|_{3,\Omega}.$ By Lemma~\ref{lem:regularity}, we further obtain
\begin{align}\label{eq:H2norm(u-Piuh)}
\varepsilon^{2}\big|u-\Pi_{h0}u\big|_{2,h}^{2} \lesssim h \varepsilon^{\frac{1}{2}}|u|_{2,\Omega} \varepsilon^{\frac{3}{2}}|u|_{3,\Omega}  \lesssim h \|f\|_{0,\Omega}^{2}.
\end{align}
From \cite[Theorem 3.2.1.2] {Grisvard1985}, $|u^{0}|_{2,\Omega} \lesssim \|f\|_{0,\Omega}$. This, together with Lemma ~\ref{lem:regularity}, leads to 
\begin{equation}\label{eq:H1norm(u-Piuh)}
\begin{split}
\big|u-\Pi_{h0}u\big|_{1,h}^{2}  & \lesssim \big|(u-u^{0})- \Pi_{h0}(u-u^{0})\big|_{1,h}^{2} + \big|u^{0}-\Pi_{h0}u^{0}\big|_{1,h}^{2} \\
& = \big|(u-u^{0})- \Pi_{h0}(u-u^{0})\big|_{1,h} \, \big|(u-u^{0})- \Pi_{h0}(u-u^{0})\big|_{1,h} + \big|u^{0}-\Pi_{h0}u^{0}\big|_{1,h}^{2} \\
 & \lesssim \big|u-u^{0}\big|_{1,\Omega}\ h\big|u-u^{0}\big|_{2,\Omega} + h^{2}\big|u^{0}\big|_{2,\Omega}^{2} \\
 & \lesssim h \big (\varepsilon^{-\frac{1}{2}} \big|u-u^{0}\big|_{1,\Omega}\big)\, \big (\varepsilon^{\frac{1}{2}} \big|u-u^{0}\big|_{2,\Omega}\big) + h^{2}\big|u^{0}\big|_{2,\Omega}^{2} \\
 & \lesssim h\|f\|_{0,\Omega}^{2} + h^{2}\|f\|_{0,\Omega}^{2}  \lesssim h\|f\|_{0,\Omega}^{2}.
 \end{split}
\end{equation}

\noindent From \eqref{eq:H2norm(u-Piuh)} and \eqref{eq:H1norm(u-Piuh)}, we obtain
\begin{align}\label{eq:unifrom approx error}
\inf_{v_{h}\in V_{h0}^{\rm{R}}} \| u - v_{h} \|_{\varepsilon,h} \leq \| u - \Pi_{h0}u \|_{\varepsilon,h} \lesssim h^{\frac{1}{2}}\|f\|_{0,\Omega}.
\end{align}

Next we are to estimate the consistency error. For term $R_{1}$:
$$
R_{1} = \varepsilon^{2}  \sum_{K \in \mathcal{G}_h} \int_K \nabla(\beta(\boldsymbol{x})\Delta u)\cdot (\nabla w_h - \nabla \Pi_{h0}^{\rm{b}} w_h) - \varepsilon^{2} \sum_{K \in \mathcal{G}_h} \int_{\partial K} \beta(\boldsymbol{x})\Delta u \frac{\partial w_h}{\partial \boldsymbol{n}} 
$$
It can be divided into two parts, for the first part, we have
\begin{equation}\label{eq:R1_1}
\begin{split}
\varepsilon^{2}  \sum_{K \in \mathcal{G}_h} \int_K \nabla(\beta(\boldsymbol{x})\Delta u)\cdot (\nabla w_h - \nabla \Pi_{h0}^{\rm{b}} w_h) 
 & \lesssim 
\varepsilon^{2} \sum_{K\in \mathcal{G}_{h}}  ( |u|_{2,K} + |u|_{3,K}) \, |\Pi_{h0}^{\rm{b}} w_{h} - w_{h}|_{1,K}^{\frac{1}{2}} \, |\Pi_{h0}^{\rm{b}} w_{h} - w_{h}|_{1,K}^{\frac{1}{2}}
 \\
 & \lesssim \varepsilon^{2} |( |u|_{2,K} + |u|_{3,K})\, |w_{h}|_{1,h}^{\frac{1}{2}}\, h^{\frac{1}{2}} |w_{h}|_{2,h}^{\frac{1}{2}}\\
& \lesssim \varepsilon^{\frac{3}{2}} h^{\frac{1}{2}} ( |u|_{2,K} + |u|_{3,K}) \| w_{h} \|_{\varepsilon,h},
\end{split}
\end{equation}
where we utilize $|w_{h}|_{1,h} \lesssim \| w_{h} \|_{\varepsilon,h}$ and $\varepsilon|w_{h}|_{2,h}\lesssim \| w_{h} \|_{\varepsilon,h}$.

\noindent Now we are to estimate another part of  $R_{1}$. Notice that $\int_{e} \llbracket\partial_{x} w_{h} \rrbracket_e \,d s =\int_{e} \llbracket\partial_{y} w_{h} \rrbracket_e \,d s = 0$ for any $e\in \mathcal{E}_{h}$. Moreover, from the trace theorem, $\|v\|_{0,e} \lesssim \|v\|_{0,K}^{\frac{1}{2}} |v|_{1,K}^{\frac{1}{2}}$ for $e\subset \partial K$. Then we have
 \begin{equation}\label{eq:R1_2}
\varepsilon^{2} \sum_{K \in \mathcal{G}_h} \int_{\partial K} \beta(\boldsymbol{x})\Delta u \frac{\partial w_h}{\partial \boldsymbol{n}} \lesssim \varepsilon^{2} |u|_{2,\Omega}^{\frac{1}{2}}\,|u|_{3,\Omega}^{\frac{1}{2}}\,h^{\frac{1}{2}}  | w_{h} |_{2,h} \lesssim \varepsilon h^{\frac{1}{2}} |u|_{2,\Omega}^{\frac{1}{2}}\,|u|_{3,\Omega}^{\frac{1}{2}} \| w_{h} \|_{\varepsilon,h}.
\end{equation}
Combining these two parts and utilizing Lemma~\ref{lem:regularity}, we obtain that
\begin{equation*}
\begin{split}
R_{1}  &\lesssim \big(\varepsilon^{\frac{3}{2}} h^{\frac{1}{2}}(|u|_{2,\Omega} + |u|_{3,\Omega}) + \varepsilon h^{\frac{1}{2}} |u|_{2,\Omega}^{\frac{1}{2}}\, |u|_{3,\Omega}^{\frac{1}{2}} \big)\| w_{h} \|_{\varepsilon,h} \\
& \lesssim h^{\frac{1}{2}}\, \big(\varepsilon\varepsilon^{\frac{1}{2}} |u|_{2,\Omega} + \varepsilon^{\frac{3}{2}} |u|_{3,\Omega} +  \varepsilon^{\frac{1}{4}} |u|_{2,\Omega}^{\frac{1}{2}}\, \varepsilon^{\frac{3}{4}}|u|_{3,\Omega}^{\frac{1}{2}} \big)\| w_{h} \|_{\varepsilon,h} \\
& \lesssim h^{\frac{1}{2}} (\varepsilon\|f\|_{0,\Omega} + \|f\|_{0,\Omega})\,\| w_{h} \|_{\varepsilon,h}.
\end{split}
\end{equation*}

Next we consider $R_{2}+R_{3}$ in the consistency error.
By \eqref{eq:model problem} and \eqref{eq:2nd problem},  $\varepsilon^{2} \Delta(\beta(\boldsymbol{x})\Delta u) = \Delta(u-u^{0})$.
When $h< \varepsilon$, we have, by Lemmas \ref{lem:consisRM} and \ref{lem:regularity}, that
\begin{equation*}
\begin{split}
R_{2}+R_{3} & =  \sum_{K\in \mathcal{G}_{h}} \int_{K} (\nabla u \cdot \nabla w_{h} + \Delta u \ w_{h}) + \varepsilon^{2}\int_{\Omega} \Delta(\beta(\boldsymbol{x})\Delta u) \ (\Pi_{h0}^{\rm{b}} w_{h} - w_{h})  \\
& =  \sum_{K\in \mathcal{G}_{h}} \int_{K} (\nabla u \cdot \nabla w_{h} + \Delta u \ w_{h}) + \int_{\Omega} \Delta(u-u^{0}) (\Pi_{h0}^{\rm{b}} w_{h} - w_{h}) \\
& \lesssim h|u|_{2,\Omega}|w_{h}|_{1,h} + h|u-u^{0}|_{2,\Omega}|w_{h}|_{1,h} \\
& \lesssim h^{\frac{1}{2}} \varepsilon^{\frac{1}{2}}|u|_{2,\Omega}|w_{h} |_{1,h} + h^{\frac{1}{2}} \varepsilon^{\frac{1}{2}}(|u|_{2,\Omega}+|u^{0}|_{2,\Omega}) |w_{h} |_{1,h}   \lesssim h^{\frac{1}{2}}\|f\|_{0,\Omega}\| w_{h} \|_{\varepsilon,h}.
\end{split}
\end{equation*}
When $\varepsilon \leq h$, noticing that $\Pi_{h0}^{\rm{b}} w_{h} \in H^{1}_{0}(\Omega)$, we obtain
\begin{equation*}
\begin{split}
R_{2}+R_{3} & =  \sum_{K\in \mathcal{G}_{h}} \int_{K} (\nabla u \cdot \nabla w_{h} + \Delta u \ w_{h}) + \int_{\Omega} \Delta(u-u^{0})(\Pi_{h0}^{\rm{b}} w_{h} - w_{h})  \\
& =  \sum_{K\in \mathcal{G}_{h}} \int_{K} (\nabla u \cdot \nabla (w_{h}-\Pi_{h0}^{\rm{b}} w_{h}) + \Delta u (w_{h}-\Pi_{h0}^{\rm{b}} w_{h})) - \int_{\Omega} \Delta(u-u^{0})(w_{h} -\Pi_{h0}^{\rm{b}} w_{h} )  \\
& =  \sum_{K\in \mathcal{G}_{h}} \int_{K} \nabla u \cdot \nabla (w_{h}-\Pi_{h0}^{\rm{b}} w_{h})  + \int_{\Omega} \Delta u^{0} (w_{h} -\Pi_{h0}^{\rm{b}} w_{h} ) \\
& = \sum_{K\in \mathcal{G}_{h}} \int_{K} \nabla (u-u^{0}) \cdot \nabla (w_{h}-\Pi_{h0}^{\rm{b}} w_{h}) + \sum_{K\in \mathcal{G}_{h}} \int_{\partial K} \frac{\partial u^{0}}{\partial \mathbf{n}} (w_{h}-\Pi_{h0}^{\rm{b}} w_{h}) \\
& \lesssim \varepsilon^{-\frac{1}{2}} |u-u^{0}|_{1,\Omega}  \varepsilon^{\frac{1}{2}} |w_{h}|_{1,h} + h|u^{0}|_{2,\Omega}|w_{h}|_{1,h} \\
& \lesssim \|f\|_{0,\Omega}h^{\frac{1}{2}} |w_{h}|_{1,h}  + h\|f\|_{0,\Omega} |w_{h}|_{1,h} \lesssim h^{\frac{1}{2}}\|f\|_{0,\Omega} \| w_{h} \|_{\varepsilon,h},
\end{split}
\end{equation*}
where Lemma~\ref{lem:consisRM} is utilized to estimate the term $\sum\limits_{K\in \mathcal{G}_{h}} \int_{\partial K} \frac{\partial u^{0}}{\partial \mathbf{n}} (w_{h}-\Pi_{h0}^{\rm{b}} w_{h})$.
Hence we obtain
\begin{align}\label{eq:unifrom consis error}
E_{\varepsilon,h}(u,w_{h}) = R_{1}+R_{2}+R_{3}  \lesssim h^{\frac{1}{2}} (\varepsilon\|f\|_{0,\Omega} + \|f\|_{0,\Omega})\,\| w_{h} \|_{\varepsilon,h}.
\end{align}
Combining \eqref{eq:StrangRRM}, \eqref{eq:unifrom approx error}, and \eqref{eq:unifrom consis error}, we obtain the uniform convergence.\qed


\end{document}